\documentclass[a4paper,fleqn]{cas-sc}
 
 \def\tsc#1{\csdef{#1}{\textsc{\lowercase{#1}}\xspace}}
 \tsc{WGM}
 \tsc{QE}
 
\usepackage{amsmath}
\usepackage{amssymb}
 \usepackage{amsthm}
 
\usepackage{graphicx}
\usepackage{diagbox} 
\usepackage{placeins}
\usepackage{subcaption}
\usepackage{float}

\renewenvironment{proof}[1][\proofname]
{\par\noindent\textbf{#1. }\rmfamily}
{\qed\bigskip\par}

\newtheorem{theorem}{Theorem}
\newtheorem{proposition}{Proposition}
\newtheorem{lemma}{Lemma}

\theoremstyle{remark}
\newtheorem{remark}{Remark}

\numberwithin{equation}{section}

\usepackage{xcolor}

\DeclareMathOperator{\interior}{int}
\DeclareMathOperator{\diag}{diag}

\begin{document}
 \let\WriteBookmarks\relax
 \def\floatpagepagefraction{1}
 \def\textpagefraction{.001}
\shorttitle{A Nonlinear CTH-Based Adaptive Cruise Controller With Safety and String Stability Guarantees}    

\shortauthors{D. Theodosis, A. Samii, N. Bekiaris-Liberis}  

\title [mode = title]{A Nonlinear CTH-Based Adaptive Cruise Controller With Safety and String Stability Guarantees}  



%

\author[1]{Dionysios Theodosis}[orcid=0000-0001-8171-0554]

\cormark[1]


\ead{dtheodp@central.ntua.gr}


\credit{Concptualization, Formal Analysis, Writing-original draft}

\affiliation[1]{organization={Department of Mathematics, National Technical University of Athens},
            addressline={Zografou Campus}, 
            city={Athens},
            postcode={15780}, 
            country={Greece}}

\author[2]{Amirhossein Samii}


\ead{asamii@tuc.gr}


\credit{Investigation, Validation}

\affiliation[2]{organization={Department of Electrical and Computer Engineering, Technical University of Crete},
            addressline={University Campus, Akrotiri}, 
            city={Chania},
            postcode={73100},  
            country={Greece}}

\author[2]{Nikolaos Bekiaris-Liberis}[orcid=0000-0002-4223-2681]


\ead{nlimperis@tuc.gr}


\credit{Investigation, Writing - review \& editing}

\affiliation[2]{organization={Department of Electrical and Computer Engineering, Technical University of Crete},
            addressline={University Campus, Akrotiri}, 
            city={Chania},
            postcode={73100},  
            country={Greece}}

\cortext[1]{Corresponding author}

\fntext[1]{Funded by the European Union (ERC, C-NORA, 101088147). Views and opinions expressed are however those of the authors only and do not necessarily reflect those of the European Union or the European Research Council Executive Agency. Neither the European Union nor the granting authority can be held responsible for them.}


\begin{abstract}
This paper studies the problem of longitudinal control, with safety and string stability guarantees, in vehicle platoons. A nonlinear adaptive cruise controller based on the Constant Time Headway (CTH) policy is designed for platoons of heterogeneous vehicles, that incorporates a speed-dependent nonlinear gain guaranteeing bounded vehicle speeds and a saturation-type nonlinearity to guarantee bounded (or non-aggressive) accelerations, while preserving  collision-avoidance properties of time-headway-based spacing policies. By introducing a suitable spacing-error variable, explicit nonlinear error dynamics are obtained, allowing a direct analysis of the closed-loop system. It is shown that the collision-free region is positively invariant and that the proposed controller guarantees convergence of the spacing errors. Furthermore, nonlinear string-stability properties are established directly from the closed-loop dynamics without resorting to linearization or frequency-domain arguments. In particular, sufficient conditions are derived for $L_p$ string stability, $1\le p\le\infty$, with respect to disturbances, external inputs acting on the leading vehicle, and perturbations of the initial conditions. 
\end{abstract}



\begin{keywords}
Adaptive Cruise Control \sep String Stability \sep  Safety \sep Constant Time Headway
\end{keywords}

\maketitle

\section{Introduction}

Adaptive Cruise Control (ACC) systems constitute one of the most widely deployed driver-assistance technologies in modern vehicles. By regulating speed  through on-board sensing of inter-vehicle distances and speeds, such systems can improve traffic flow performance, fuel efficiency, and road safety. An ACC-equipped vehicle maintains a desired cruising speed in the absence of preceding vehicles, while automatically switches to spacing control when a slower vehicle is detected ahead. The success of ACC systems has motivated extensive research on vehicle platoons, where multiple vehicles travel cooperatively while maintaining desired inter-vehicle spacings (see for instance \cite{Bekiaris1}, \cite{Karafyllis2023}, \cite{Monteil2017}, \cite{Orosz},  \cite{Yang}, \cite{Zheng2016} and references therein).

A fundamental component in the design of ACC systems is the choice of the spacing policy, namely, the rule that specifies the desired spacing of successive vehicles. Different spacing policies have been proposed in the literature, including constant spacing, variable spacing, and constant time-headway policies (see for instance  \cite{Haan}, \cite{Ioannou1993},   \cite{Lunze2019}, \cite{Rajamani2012}, \cite{Santhanakrishnan2003}, \cite{Wijnbergen2020}). One of the most widely studied spacing policies is the CTH policy (see for instance \cite{Ampountolas}, \cite{Bekiaris1}, \cite{Ioannou1993}, \cite{Lunze2019}, \cite{Rajamani2012}, \cite{Shen}), according to which the desired spacing between successive vehicles is proportional to the speed of the following vehicle. The CTH policy has received considerable attention due to its favorable performance characteristics, its ability to accommodate varying traffic conditions, and its suitability for platoon-control applications.

String stability is widely regarded as one of the most important performance requirements for ACC-equipped vehicle platoons. A platoon is string stable if disturbances generated by one vehicle do not amplify as they propagate through the vehicle string. Despite its intuitive interpretation, a variety of string-stability notions have been proposed in the literature, leading to several distinct definitions and characterizations, see for instance \cite{Besselink2017},  \cite{Feng2019}, \cite{Karafyllis2026}, \cite{Montanino}, \cite{Ploeg2014}, \cite{Swaroop1996}, \cite{Zhou}. Among the various notions proposed in the literature, the $L^p$ based framework introduced in \cite{Ploeg2014} provides a unified treatment of both linear and nonlinear platoons. In addition to disturbance attenuation along the string, it captures the effects of external inputs acting on the leading vehicle, as well as perturbations in the initial conditions. Beyond model-specific analyses, the recent work \cite{Karafyllis2026} has focused on deriving general sufficient conditions for string stability. In particular, trajectory-based and Lyapunov-based criteria have been proposed for homogeneous strings of nonlinear interconnected systems, yielding verifiable $L^p$ string-stability conditions through small-gain arguments.

In addition to stability and string stability, collision avoidance is an essential requirement for autonomous vehicle platoons. A variety of safety-critical control methodologies have been proposed in the literature, including nonlinear feedback designs with Follow-the-Leader and Bidirectional architecture (\cite{Karafyllis2023}, \cite{Karafyllis2025}), prescribed-performance control (\cite{Verginis2018}), funnel control (\cite{Berger}), MPC (\cite{Wang}), and Control Barrier Function (CBF)-based designs (\cite{Alan}, \cite{Ames2017}, \cite{Hamdipoor}, \cite{He2018}, \cite{Molnar}, \cite{Zhao}). In particular, CBFs have emerged as a powerful framework for enforcing safety constraints through forward invariance arguments. Nevertheless, most existing works primarily focus on safety guarantees, whereas stability and string-stability properties are often investigated through local linearization, transfer-function analysis, or numerical validation. Consequently, rigorous nonlinear string-stability analyses for collision-free platoons remain relatively limited.

In this paper, we propose a nonlinear controller using the CTH policy for platoons of heterogeneous vehicles, which incorporates a nonlinear speed-dependent gain that naturally enforces prescribed speed constraints and a saturation-type nonlinearity to guarantee bounded (or non-aggressive) accelerations, while preserving collision avoidance properties. A key feature of the proposed controller is that the spacing-policy error satisfies a scalar nonlinear differential equation whose dynamics are independent of the preceding vehicle (as it is the case with the original, linear CTH controller for second-order vehicle dynamics, see \cite{Rajamani2012}). This property provides an explicit characterization of the evolution of the spacing-error dynamics and enables a direct nonlinear analysis of the closed-loop platoon. Safety in terms of collision avoidance and bounded speeds is shown by establishing invariance of appropriate and practically relevant sets. In addition, equilibrium properties are characterized and global stability (with respect to the safe sets) results are established. Furthermore, explicit solutions' estimates are derived quantifying the effect of disturbance propagation along the platoon, leading to rigorous $L^p$ string-stability results for $p\ge1$. In contrast to classical frequency-domain analyses, the obtained results are established directly for the nonlinear closed-loop dynamics and do not rely on local linearization arguments. The proposed methodology is also extended to a third-order vehicle dynamics model through a dynamic realization of the nominal acceleration command. Essentially, the realization is based on a one-step backstepping and, as a result, it leads to a control law of Cooperative Adaptive Cruise Control (CACC)-type, since the acceleration of the preceding vehicle is employed. By introducing an additional acceleration state and an exponentially stable realization error, the resulting augmented third-order controller, constructed on the basis of the third-order vehicle dynamics model, preserves the essential structural properties of the second-order design.

The main contributions of this work can be thus summarized as follows:
\begin{enumerate}
\item A complete analysis of linear CTH-based controllers in terms of set invariance  and string stability is presented.

\item A nonlinear modification of linear CTH-based controllers is proposed for longitudinal platoon control with bounded vehicle speeds and smoother acceleration profiles achieved through a saturation-type nonlinearity.

\item Collision avoidance is established through the positive invariance establishment of the collision-free region, and explicit spacing-error dynamics are obtained.

\item Stability properties of the resulting nonlinear platoon are analyzed without resorting to linearization.

\item Rigorous $L^p$ string-stability estimates for $p\ge1$ are derived directly from the nonlinear closed-loop dynamics.

\item The controller design is extended to a third-order vehicle model through a dynamic realization that preserves the principal properties of the nominal second-order controller.
\end{enumerate}

The remainder of the paper is organized as follows. Section \ref{sec:linear} introduces the platoon model and the properties of widely studied linear CTH-based controllers. Section \ref{sec:nonlinear} presents the nonlinear CTH-based controller and establishes  invariance  of the safety sets for the closed-loop system. Section \ref{sec:ss} is devoted to the analysis of string stability and Section \ref{sec:ls} studies the stability properties of the proposed controller. Section \ref{sec:sims} provides numerical simulation results, including a realistic scenario in which the trajectory of the leading vehicle is obtained from  {the NGSIM (see \cite{Montanino2}) and OpenACC datasets (see \cite{OpenACC})}. The proofs of all results are given in Section \ref{sec:proofs}. Finally, Section \ref{sec:conc} contains concluding remarks and directions for future research.\pagebreak

\noindent \textbf{Notation.} Throughout this paper, we adopt the following notation.

\noindent $*$ ${\mathbb R}_{+} :=[0,+\infty )$ denotes the set of non-negative real numbers.\vspace{0.4em}

\noindent $*$ By  $\diag\{a_1,a_2,\ldots,a_n\}$ we denote the $n\times n$ diagonal matrix with $a_1, a_2,\ldots, a_n \in \mathbb{R}$ on its diagonal.\vspace{0.4em}

\noindent $*$ With $x^{-} $ we denote the negative part of $x\in {\mathbb R}$ , i.e., $x^{-} =\max \left\{-x,0\right\}$, and with $x^{+} $ we denote the positive part of $x\in {\mathbb R}$ , i.e., $x^{+} =\max \left\{x,0\right\}$.\vspace{0.4em}

\noindent $*$ By $|x|$ we denote both the Euclidean norm of a vector $x\in {\mathbb R}^{n} $ and the absolute value of a scalar $x\in {\mathbb R}$.\vspace{0.4em}

\noindent $*$ By $1_n$ we denote the 1-vector of size  $n$, i.e., $1_n=(1,1,\ldots,1)\in\mathbb{R}^n$. \vspace{0.4em}

\noindent $*$ For a set $S\subseteq {\mathbb R}^{n} $, $\bar{S}$ denotes the closure of $S$, $\partial D$ the boundary of $S$, and $\interior(S)$ the interior of $S$.

\noindent $*$ Let $D\subseteq {\mathbb R}^{n} $ be an open set and let $S\subseteq {\mathbb R}^{n} $ be a set that satisfies $D\subseteq S\subseteq \bar{D}$. By $C^{0} (S ; \Omega )$, we denote the class of continuous functions on $S$, which take values in $\Omega \subseteq {\mathbb R}^{m} $. By $C^{k} (S ;  \Omega )$, where $k\ge 1$ is an integer, we denote the class of functions on $S\subseteq {\mathbb R}^{n} $, which take values in $\Omega \subseteq {\mathbb R}^{m} $ and have continuous derivatives of order $k$. In other words, the functions of class $C^{k} (S ; \Omega )$ are the functions which have continuous derivatives of order $k$ in $D=\interior(S)$ that can be continued continuously to all points in $\partial D\cap S$.  When $\Omega ={\mathbb R}$ then we write $C^{0} (S)$ or $C^{k} (S)$.\vspace{0.4em}

\noindent $*$ By $L^{p} $ with $p\ge 1$ we denote the equivalence class of measurable functions $f:{\mathbb R}_{+} \to {\mathbb R}^{n} $ for which $\left\| f\right\| _{[0,t],p} =\left(\int _{0}^{t}\left|f(x)\right|^{p} dx \right)^{1/p} <+\infty $. $L^{\infty } $ denotes the equivalence class of measurable functions $f:{\mathbb R}_{+} \to {\mathbb R}^{n} $ for which $\left\| f\right\| _{[0,t],\infty } ={\mathop{\textrm{ess}\sup }\limits_{x\in [0,t)}} \left(\left|f(x)\right|\right)<+\infty .$ By $L^{\infty } \left(A;\Omega \right)$ we denote the equivalence class of measurable functions $f:A\to \Omega $ for which $\left\| f\right\| _{\infty } ={\mathop{\sup }\limits_{x\in A}} \left(\left|f(x)\right|\right)<+\infty $ where ${\mathop{\sup }\limits_{x\in A}} \left(\left|f(x)\right|\right)$ is the essential supremum. By $L_{\textrm{loc}}^{\infty } \left({\mathbb R}_{+} ;\Omega \right)$ we denote the equivalence class of measurable functions $f:{\mathbb R}_{+} \to \Omega $ with $f\in L^{\infty } \left(\left[0,T\right];\Omega \right)$ for every $T>0$.\vspace{0.4em}

\section{Safe Invariant Sets of the Linear CTH Controller}\label{sec:linear}

A typical model that describes the evolution of a platoon of $n$ identical vehicles consist of the following set of ODEs (see, e.g. \cite{Ampountolas}, \cite{Lunze2019}, \cite{Ploeg2014}) 
\begin{equation} \label{GrindEQ__2_1_} 
\begin{aligned} 
&{\dot{s}_{i} (t)=v_{i-1} (t)-v_{i} (t)} \\
& {\dot{v}_{i} (t)=u_{i} (t)} \end{aligned},\quad i=1,\ldots ,n ,
\end{equation} 
where $s_{i} :=x_{i-1}  -x_{i}  $ is the back-to-back distance of the $i$-th vehicle from the preceding $(i-1)$ vehicle, $v_{i}  $ is the speed of $i$-th vehicle and $u_{i} $ the acceleration (control input) of the $i$-th vehicle. With $v_{0} $ we denote the speed of the leading vehicle which is an external input. 

 A very common and highly studied ACC law is the linear CTH controller (see \cite{Ampountolas}, \cite{Ioannou1993}, \cite{Rajamani2012})  
\begin{equation} \label{GrindEQ__2_2_} 
u_i = k_{p,i} (s_i-r-h_iv_i)+ k_{v,i}(v_{i-1}-v_i),  \;\; i=1,\ldots,n,
\end{equation} 
where $k_{p,i}$, $ k_{v,i}>0$ are controller gains, $h_i$ is the time-headway, and $r_i$ is the standstill spacing. We further denote by $\ell_i $ the physical collision threshold (vehicle length) with $\ell_i<r_i$ and by $v_{\max } $ the speed limit. The closed-loop system \eqref{GrindEQ__2_1_}, \eqref{GrindEQ__2_2_} corresponds to a platoon of heterogeneous vehicles where each vehicle has its own controller gains, time-headway and standstill spacing.
Moreover, when the leader is moving with constant speed $v_{0} (t)\equiv v^{*} >0$ the platoon under \eqref{GrindEQ__2_2_}  admits the equilibrium 
\begin{equation} \label{GrindEQ__2_3_} 
v_i=v^*, \;\;\; s_{i}^{*} =r_i+h_iv^{*} ,\quad i=1,\ldots,n,
\end{equation} 
which is globally asymptotically stable provided that $k_{p,i}$, $ k_{v,i}>0$.

For the  CTH  policy, the controller gains must satisfy appropriate conditions to guarantee string stability (see for instance \cite{Ampountolas}, \cite{Monteil2017}, \cite{Ploeg2014}). In particular, the necessary and sufficient condition for $L^2$ string stability  (with respect to speed errors propagation) is 
\begin{equation}\label{eq:ss_cond}
h^2_i k_{p,i} + 2 h_i  k_{v,i} \ge 2,
\end{equation}
whereas $L^{\infty}$ string stability  (with respect to speed errors propagation) is guaranteed if
\begin{equation}\label{lin:ss}
(h_ik_{p,i}+ k_{v,i})^2\ge4k_{p,i}.
\end{equation}
Both conditions for $L^2$ and $L^{\infty}$ string stability  (with respect to speed errors propagation), relate the time-headway $h$ to the controller gains $ k_{v,i}$ and $k_{p,i}$.

In the context of vehicle platoons, physically meaningful solutions should preserve collision-free movement and non-negative vehicle speeds for all times. Therefore, an important problem is to identify appropriate conditions on the initial states and the closed-loop dynamics under which these properties are guaranteed. For notational convenience, we define
\begin{equation} \label{GrindEQ__2_14_} 
s=\left(s_{1} ,\ldots,s_{n} \right)\in {\mathbb R}^{n} , v=\left(v_{1} ,\ldots,v_{n} \right)\in {\mathbb R}^{n}  .
\end{equation} 
We have the following theorem that shows invariance of solutions in a particular set.

\begin{theorem} \label{thm:1}
Let $v_{0}\in C^0(\mathbb{R}^+;\mathbb{R}^+)$ and define the set
\begin{equation} \label{GrindEQ__2_15_} 
K:=\left\{(s,v)\in {\mathbb R}^{2n} :v_{i} \ge 0,\; s_{i} \ge r_i+c_iv_{i} ,\;i=1,\ldots,n \right\} ,
\end{equation} 
where $c_i>0$, $i=1,\ldots,n$ are constants that satisfy
\begin{equation}\label{GrindEQ__2_16_}
\begin{aligned}
&1-c_i  k_{v,i} \ge 0,\\
&h_i k_{v,i}\ge1,\\
&c_i k_{v,i}-1+c_i k_{p,i} (h_i-c_i) \ge 0.
\end{aligned}
\end{equation}
Then, the solution of \eqref{GrindEQ__2_1_}, \eqref{GrindEQ__2_2_} with $\left(s(0),v(0)\right)\in K$ satisfies $(s(t),v(t))\in K$, for all $t\ge 0$.
\end{theorem}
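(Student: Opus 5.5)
The plan is to change coordinates so that $K$ becomes the nonnegative orthant, and then show that the closed-loop system in the new coordinates is a positive linear system driven by a nonnegative input. (In \eqref{GrindEQ__2_2_} I read the standstill spacing $r$ as $r_i$.)

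\textbf{Step 1: change of coordinates.} For $i=1,\ldots,n$ set
\[
z_i:=s_i-r_i-c_iv_i .
\]
Then $K=\{(s,v): z_i\ge 0,\ v_i\ge 0,\ i=1,\ldots,n\}$. The map $(s,v)\mapsto(z,v)$ is an affine bijection, so it suffices to prove invariance of the orthant in $(z,v)$.

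\textbf{Step 2: closed-loop dynamics in $(z,v)$.} Using \eqref{GrindEQ__2_1_}, \eqref{GrindEQ__2_2_} and $s_i-r_i=z_i+c_iv_i$, a direct computation gives
\begin{align*}
\dot v_i &= k_{p,i}z_i+\bigl(k_{p,i}(c_i-h_i)-k_{v,i}\bigr)v_i+k_{v,i}v_{i-1},\\
\dot z_i &= v_{i-1}-v_i-c_i\dot v_i\\
&= -c_ik_{p,i}z_i+\bigl(c_ik_{v,i}-1+c_ik_{p,i}(h_i-c_i)\bigr)v_i+(1-c_ik_{v,i})v_{i-1}.
\end{align*}
Write this as $\dot w=Aw+b\,v_0(t)$ with $w=(z,v)\in\mathbb{R}^{2n}$, where $b$ has the single nonzero entries $k_{v,1}\ge 0$ (in the $\dot v_1$ row) and $1-c_1k_{v,1}\ge 0$ (in the $\dot z_1$ row).

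\textbf{Step 3: $A$ is Metzler.} The off-diagonal entries of $A$ are the following.
\begin{itemize}
\item $k_{p,i}>0$ and $k_{v,i}>0$ in the $\dot v_i$ rows.
\item $1-c_ik_{v,i}\ge 0$ in the $\dot z_i$ rows, by the first condition of \eqref{GrindEQ__2_16_}.
\item $c_ik_{v,i}-1+c_ik_{p,i}(h_i-c_i)\ge 0$ in the $\dot z_i$ rows, by the third condition of \eqref{GrindEQ__2_16_}.
\end{itemize}
The diagonal entries have arbitrary sign, which is allowed. Hence $A$ is Metzler, so $e^{At}$ has nonnegative entries for all $t\ge0$. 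This follows from writing $A=M-\mu I$ with $M\ge 0$ entrywise and $\mu>0$ large, so that $e^{At}=e^{-\mu t}e^{Mt}\ge 0$.

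\textbf{Step 4: variation of constants.} The system is linear with continuous input, so its solution is unique and global:
\[
w(t)=e^{At}w(0)+\int_0^t e^{A(t-\tau)}b\,v_0(\tau)\,d\tau .
\]
We have $w(0)\ge0$ because $(s(0),v(0))\in K$, $b\ge 0$, $v_0\ge 0$, and $e^{A\cdot}\ge0$. Hence $w(t)\ge0$ for all $t\ge0$, i.e. $(s(t),v(t))\in K$.

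\textbf{Main obstacle.} The only real work is Step 2: getting the coefficient of $v_i$ in $\dot z_i$ exactly right and recognizing it as the third expression in \eqref{GrindEQ__2_16_}. The role of the second condition $h_ik_{v,i}\ge1$ is only consistency. Choosing $c_i=1/k_{v,i}$ makes the first condition an equality, and the third reduces to $k_{p,i}(h_i-1/k_{v,i})/k_{v,i}\ge0$, which holds exactly when $h_ik_{v,i}\ge1$. So the second condition guarantees that admissible $c_i$ exist; the invariance argument itself does not use it.

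A Nagumo-type boundary check would be an alternative to Steps 3 and 4:
\begin{itemize}
\item On $\{v_i=0\}$ one has $\dot v_i=k_{p,i}(s_i-r_i)+k_{v,i}v_{i-1}\ge0$.
\item On $\{z_i=0\}$ one has $\dot z_i\ge0$ by the two sign conditions above.
\end{itemize}
However, it requires more care at corners and because the input is time-varying, so the positive-system argument is the one I would present.
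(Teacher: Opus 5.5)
Your proof is correct, but it takes a different route from the paper's. You use the same coordinates (your $z_i$ is the paper's $Q_i$) and obtain the same dynamics. The paper then works with the violation function $W=\frac12\sum_i\bigl((v_i^-)^2+(Q_i^-)^2\bigr)$. It bounds the cross terms via $-x^-y\le x^-y^-$ and Young's inequality to get $\dot W\le qW$, and concludes $W\equiv 0$ from $W(0)=0$ and Gronwall. You instead observe that the closed loop is a positive linear system (Metzler $A$, nonnegative input vector, $v_0\ge 0$), and conclude from $e^{At}\ge 0$ and variation of constants.

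Both arguments use exactly the same sign information. The cross terms the paper can dominate by products of negative parts are precisely your off-diagonal entries, whose nonnegativity comes from the first and third conditions in \eqref{GrindEQ__2_16_}. So the paper's computation is in effect a Lyapunov-style proof of positivity of a Metzler system.

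Your version is shorter and more structural, and it makes transparent that $h_ik_{v,i}\ge 1$ is not needed for invariance. The paper invokes that condition only to sign the diagonal coefficient $k_{v,i}-k_{p,i}(c_i-h_i)$ so that a term can be discarded. This is unnecessary: diagonal terms of either sign are absorbed by Gronwall. In any case the first and third conditions already force $c_i\le h_i$, so that coefficient is at least $k_{v,i}>0$.

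What the paper's route buys is independence from linearity and from an explicit solution formula. The same negative-part/Gronwall template, with $E_i=\frac12\bigl((v_i^-)^2+((v_i-v_{\max})^+)^2\bigr)$, is reused for the nonlinear controller in Theorem~\ref{thm:2}, where the matrix-exponential argument has no direct analogue.
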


Theorem \ref{thm:1} provides sufficient conditions under which vehicles do not collide with each other while their speeds remain non-negative. Although many physically meaningful initial conditions outside $K$ (for instance, $s_i(0)\ge r_i$) also lead to collision-free motion, such behavior is no longer guaranteed a priori and may depend on the initial spacing and speed configuration, as well as the leader's motion.  The set $K$ is the positively invariant set for the linear CTH controller and is based on the constant time-headway policy: the condition $s_{i} \ge r_i+c_iv_{i} $ with $c$ satisfying \eqref{GrindEQ__2_16_}, is exactly the quantity needed to prove speed positivity, while the additional assumption $r_i>\ell_i$ ensures that every state in $K$ is strictly collision-free (recall that $r_i$ is the standstill inter-vehicle distance and $\ell_i$ is the vehicle length). Theorem \ref{thm:1} also extends the corresponding result in \cite{Lunze2019} which uses the assumption that $s_{i} (0)=r+hv_{i} (0)$, $v_{i} (0)\ge 0$, for $i=1,\ldots,n$.

Theorem \ref{thm:1} can be proved using Nagumo's Theorem (see \cite{Aubin}). Here we provide an alternative proof via the function $W(t)=\frac{1}{2}\sum_{i=1}^n\left( ((v_i(t))^-)^2+ ((s_i(t)-r_i-c_iv_i(t))^-)^2 \right)$ which measures the violation of the inequalities defining $K$ in \eqref{GrindEQ__2_15_}. A similar approach will also be used for the nonlinear controller presented in the following section. The proof of Theorem \ref{thm:1} can be found in Section \ref{sec:proofs}.

It should be highlighted that the safety condition \(h_i k_{v,i}\ge 1\) in \eqref{GrindEQ__2_16_} implies both the $L^2$ and $L^{\infty}$ string stability conditions \eqref{eq:ss_cond} and \eqref{lin:ss} (provided that $k_{p,i}>0$). Indeed, for $L^{2}$ string stability, since \(k_{p,i}>0\) and \(h_i>0\), the second inequality in \eqref{GrindEQ__2_16_}  gives
\begin{equation}\label{L2:lin}
h_i^2 k_{p,i} + 2 h_i  k_{v,i}
\ge h_i^2 k_{p,i} + 2
> 2.
\end{equation}
Similarly, for $L^{\infty}$ string stability, using the  arithmetic and geometric means inequality $x+y\ge2\sqrt{xy}$, $x,y\ge0$, and the condition  \(h_i k_{v,i}\ge 1\) in \eqref{GrindEQ__2_16_} we have
\begin{equation}\label{Linfty:lin}
h_ik_{p,i}+ k_{v,i}\ge2\sqrt{h_ik_{p,i} k_{v,i}}\ge2\sqrt{k_{p,i}},
\end{equation}
which implies that condition \eqref{lin:ss} holds.
Hence, any controller gains satisfying the safety conditions automatically
satisfy the $L^2$ and $L^{\infty}$ string stability requirements \eqref{eq:ss_cond} and \eqref{lin:ss}, respectively. Notice that the converse is not always true (take for instance,  $h_i=1$, $k_{p,i}=2$, and $ k_{v,i}=0.1$).

The parameters $c_i$ are  auxiliary quantities used to define the invariant set $K$ and its values depend on the gains of the controller via conditions \eqref{GrindEQ__2_16_}. The reason is that higher values of $ k_{v,i}$ and $k_{p,i}$ may enhance the braking capabilities of the vehicle, thus enlarging its safety set. Using  \eqref{GrindEQ__2_16_}, we can deduce that the admissible values of $c_i$ that satisfy these inequalities lie  in the interval
\begin{equation}\label{c:values}
\left[ \frac{ k_{v,i}+k_{p,i} h_i -\sqrt{( k_{v,i}+k_{p,i}h_i)^2-4k_{p,i}}}{2k_{p,i}}, \frac{1}{ k_{v,i}}\right],
\end{equation}
where $ k_{v,i}+k_{p,i}h_i\ge 2\sqrt{ k_{v,i} h_i k_{p,i}}\ge 2\sqrt{k_{p,i}}$, due to the condition $h_i k_{v,i}\ge1$ and the inequality $x+y \ge 2\sqrt{xy}$ that holds for all $x,y\ge0$.
Thus, in practical applications of controller \eqref{GrindEQ__2_2_}, the values of $k_{p,i}$, $ k_{v,i}$, and $c$ should be appropriately selected to ensure both safety and string stability.

A particularly important choice is $c_i=h_i$, $i=1,\ldots,n$. In this case, condition \eqref{GrindEQ__2_16_} reduces to $ k_{v,i}=1/h_i$ and the spacing constraint of the set $K$,   $s_i\ge r_i+h_iv_i$, coincides with the CTH spacing policy. Hence, for $c_i=h_i$, and consequently for $ k_{v,i}=1/h_i$, the positively invariant set $K$ is directly characterized by the desired CTH spacing requirement. Moreover, when $ k_{v,i}=1/h_i$, condition \eqref{GrindEQ__2_16_}, reduces to 
\[
\begin{aligned}
&h_i\ge c_i,\\
& c_ih^{-1}_i-1+c_ik_{p,i}(h_i-c_i)=(h_i-c_i)(k_{p,i}c_i-h^{-1}_i)\ge0,
\end{aligned}
\]
which implies that the admissible values of $c_i$ must satisfy $\frac{1}{k_{p,i}h_i}\leq c_i\leq h_i$.
Therefore, $c_i=h_i$ is the unique admissible value whenever $k_{p,i} h_i^2\leq 1$, whereas for $k_{p,i}h_i^2>1$ one may select values $c_i<h_i$, enlarging the invariant set. One such example is given by
\[
 k_{v,i}=\frac{1}{h_i},\;\;k_{p,i}=\frac{1}{h_i}\left(k_i-\frac{1}{h_i}\right),
\]
where \(k_i>1/h_i\) giving the controller
\begin{equation}\label{CTH:linear}
u_i= \frac{1}{h_i}\left(k_i-\frac{1}{h_i}\right) \left(s_{i} -r_i\right)+\frac{1}{h_i} v_{i-1} -k_i v_{i},\; i=1,\ldots,n.
\end{equation}
The condition $k_ih_i>1$ immediately yields string stability as discussed earlier, as well as safety with $c_i=h_i$. If moreover, $k_ih_i>2$, then $c_i$ can be selected equal to $c_i=\frac{h_i}{k_ih_i-1} =\frac{1}{k_i-h_i^{-1} } $.  The constants $k_i$ then, can be thought as a damping coefficient highlighting the braking capabilities of each vehicle. If the braking capabilities of the vehicle are sufficiently strong, namely if $h_ik_i>2$, the set $K$ is ``larger''.  Note however, that in practice such gains are typically small, see for instance \cite{Ampountolas}, \cite{Orosz}.

The preceding analysis establishes that the linear CTH-based controller guarantees collision-free motion, non-negative speeds, and string stability, provided that the initial condition belongs to the positively invariant set $K$ and that conditions \eqref{eq:ss_cond}, \eqref{L2:lin}, and \eqref{Linfty:lin} hold. For initial conditions outside this set, neither collision avoidance nor non-negative speeds can be ensured. Moreover, the controller does not explicitly incorporate operational constraints such as road speed limits, which is an additional safety requirement, and therefore no upper bounds on the vehicle speeds can be guaranteed, (see for instance \cite{Karafyllis2023} and the simulations in Section \ref{sec:sims}). These limitations motivate the investigation of nonlinear control strategies capable of enforcing additional state constraints while preserving the desirable safety, stability, and string stability properties of the platoon.

\section{Nonlinear CTH-Based ACC Design}\label{sec:nonlinear}

Motivated by the limitations of the linear controller \eqref{GrindEQ__2_2_}  discussed in the previous section, we next introduce a nonlinear controller based on the CTH policy, designed to enforce collision-free motion, speed positivity, and bounded speeds, while retaining the string stability properties of the linear CTH-based controller. The nonlinear CTH-based controller is given by
\begin{equation} \label{GrindEQ__3_1_} 
u_{i} =F_i (s_{i} ,v_{i} ,v_{i-1} )= \frac{1}{h_i} \left(v_{i-1} -v_{i} \right)+\beta _{i} \chi (v_{i} )\sigma(H_{i})  ,\;\; i=1,\ldots,n,
\end{equation} 
where $\beta _{i} >0$ and 
\begin{align}
&H_{i} =s_{i} -r_{i} - h_i v_{i}  , \label{GrindEQ__3_2_} \\
&\chi (v_{i} )=(v_{\max } -v_{i} )\psi (v_{i} ),\label{GrindEQ__3_3_} 
\end{align} 
with $r_{i} >\ell _{i} >0$, $i=1,\ldots,n$, $\psi \in C^{1} \left( {\mathbb R}_{+} \right)$ satisfying $\psi (v)>0$ for $v>0$, and $\sigma$ being a $C^1$ function that satisfies the following properties
\begin{align}
&\sigma(0)=0, \label{s1}\\
&H\sigma(H)>0,\quad H\neq0,\label{s2}\\
&|\sigma(H)|\leq |H|, \;\; H\in\mathbb{R},\label{s3}\\
&\forall R>0, \exists m_R>0: \;\;H\sigma(H)\ge m_R H^2, \;\;|H|\leq R.\label{s4}
\end{align}

Conditions \eqref{s1}-\eqref{s4} impose mild structural assumptions on the nonlinear function $\sigma$ and characterize a broad class of monotone, sign-preserving nonlinearities. In particular, condition \eqref{s1} ensures that the equilibrium spacing policy is preserved and that zero spacing error corresponds to zero control action. Condition \eqref{s2} requires $\sigma$ to have the same sign as its argument, so that it always acts to (globally) stabilize the spacing error. Condition \eqref{s3}  is a linear growth condition, which guarantees (theoretically) well-posedness of the resulting closed-loop system and (practically) that the employed nonlinearity is less aggressive than a linear counterpart. Finally, condition \eqref{s4} is a local sector condition that provides a quadratic lower bound on  $H\sigma(H)$ and is used in the proofs to establish $L^p$ string stability and exponential decay estimates.

Here we consider the case of a string of $n$ heterogeneous vehicles in the sense that each vehicle has its own headway $h_{i} $, standstill distance $r_{i} $,  length $\ell _{i} $, and gain $\beta_i$. Examples of $\sigma$ satisfying conditions \eqref{s1}-\eqref{s4} are 
\begin{align}
&\sigma(H)=H , \quad \textrm{ and }\label{sigma:linear}\\
&\sigma(H)=\tilde{H}\tanh\left(\frac{H}{\tilde{H}}\right),\label{sigma:nonlinear}
\end{align}
for some $\tilde{H}>0$. Observe that for $\chi (v)\equiv 1$, $\sigma(H)=H$, and $\beta_{i}=\frac{1}{h_i}\left(k_i-\frac{1}{h_i}\right) $, controller \eqref{GrindEQ__3_1_} reduces to the linear controller \eqref{CTH:linear}. On the other hand, $\sigma(H)=\tilde{H}\tanh\left(\frac{H}{\tilde{H}}\right)$ provides a saturation on the spacing error $H_i$ allowing for smoother transient behavior and bounded acceleration, which may be beneficial particularly for large initial spacings.

The nonlinear modification via \eqref{GrindEQ__3_3_} allows us to consider additional properties such as bounded speeds, as well as stability and string stability (studied in the following sections). The idea behind this modification of the linear CTH-based controller \eqref{GrindEQ__2_2_} is that, since $\chi(0)=\chi(v_{\max})=0$, the spacing contribution in the feedback vanishes near the speeds $v=0$ and $v=v_{\max}$, thereby preventing the controller from driving speeds outside the admissible interval $[0,v_{\max}]$, while the remaining relaxation term continues to regulate the relative speeds of the preceding and following vehicles.

The leader speed $v_{0} \in C^0(\mathbb{R}_+;\mathbb{R}_+)$ is assumed to satisfy 
\begin{equation} \label{GrindEQ__3_4_} 
0\le v_{0} \left(t\right)\le v_{\max}, \textrm{ for all } t\ge 0. 
\end{equation} 
Define for $i=1,\ldots,n$ 
\begin{equation} \label{GrindEQ__3_5_} 
G_{i} :=s_{i} -\ell _{i} - h_{i}  v_{i}  .
\end{equation} 
We then define the following set
\begin{equation} \label{GrindEQ__3_6_} 
D:=\{ (s,v)\in {\mathbb R}^{2n} :0\le v_{i} \le v_{\max} ,\; \; G_{i} >0,\; \; i=1,\ldots ,n\} . 
\end{equation} 
The safety set $D$ above describes all states in which the speeds are bounded, i.e., $v_i \in [0,v_{\max}]$, $i=1,\ldots,n$, as well as collision-free motion, i.e., $s_i>\ell_i+ h_i v_i \ge \ell_i$, $i=1,\ldots,n$. The following result establishes invariance of the set $D$.

\begin{theorem}\label{thm:2}
For any leader speed $v_{0}  $ satisfying \eqref{GrindEQ__3_4_}, the solution of the closed-loop system \eqref{GrindEQ__2_1_}, \eqref{GrindEQ__3_1_} satisfies the following implication
\begin{equation}\label{GrindEQ__3_7_}
(s(0),v(0))\in D\Rightarrow (s(t),v(t))\in D \textrm{ for all }t\ge 0.
\end{equation}
Moreover, for any $(s(0),v(0))\in D$ the following estimates hold for all $i=1,\ldots,n$ and $t\ge 0$ 
\begin{align} 
&-\left(r_{i} -\ell _{i} \right)<\min \left\{H_{i} (0),0\right\}\le H_{i} (t)\le \max \left\{H_{i} (0),0\right\} ,\label{GrindEQ__3_8_} \\
&\ell_i +\min \left\{G_{i} (0),r_{i} -\ell _{i} \right\}\le s_{i} \left(t\right)\le \ell _{i} +  h_{i} v_{\max} +\max \left\{G_{i} \left(0\right),r_{i} -\ell _{i} \right\} .\label{GrindEQ__3_9_} 
\end{align} 

\end{theorem}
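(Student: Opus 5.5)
The key computation is the evolution of $H_i$ along solutions. From \eqref{GrindEQ__2_1_} and \eqref{GrindEQ__3_1_},
\[
\dot H_i=\dot s_i-h_i\dot v_i=(v_{i-1}-v_i)-(v_{i-1}-v_i)-h_i\beta_i\chi(v_i)\sigma(H_i)=-h_i\beta_i\chi(v_i)\sigma(H_i).
\]
This scalar equation does not involve $v_{i-1}$. Together with $\dot v_i$, it drives the whole argument. Since $F_i$ is $C^1$, solutions exist locally and are unique. I will show they remain in $D$ on their maximal interval $[0,T)$, and are bounded there, so $T=+\infty$.

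\textbf{Step 1: $H_i$ keeps its sign and is monotone while $v_i\in[0,v_{\max}]$.} As long as $v_i(t)\in[0,v_{\max}]$ we have $\chi(v_i)\ge0$. The equation for $H_i$ can be written $\dot H_i=a_i(t)H_i$, where $a_i(t)=-h_i\beta_i\chi(v_i)\sigma(H_i)/H_i\le0$ for $H_i\neq0$; this quotient is bounded by \eqref{s3} and extends continuously through $H_i=0$. Hence $H_i(t)=H_i(0)\exp(\int_0^t a_i)$, so $H_i$ keeps the sign of $H_i(0)$ and $|H_i|$ is nonincreasing. This gives $\min\{H_i(0),0\}\le H_i(t)\le\max\{H_i(0),0\}$. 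The strict lower bound in \eqref{GrindEQ__3_8_} follows from $G_i(0)>0$, since $H_i=G_i-(r_i-\ell_i)$. Consequently $G_i(t)=H_i(t)+r_i-\ell_i\ge\min\{G_i(0),r_i-\ell_i\}>0$, so the constraint $G_i>0$ is preserved.

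\textbf{Step 2: speed bounds (the main obstacle).} This step is delicate because it is coupled to the previous vehicle, so I would argue by contradiction, vehicle by vehicle. Let $t^*<T$ be the first time some $v_i$ leaves $[0,v_{\max}]$, and choose $i$ minimal among the vehicles that exit at $t^*$. Then $v_{i-1}\in[0,v_{\max}]$ on $[0,t^*]$, either by the choice of $t^*$ for $i\ge2$ or by \eqref{GrindEQ__3_4_} for $i=1$. On $[0,t^*]$ we have
\[
\dot v_i=\tfrac1{h_i}(v_{i-1}-v_i)+\beta_i\chi(v_i)\sigma(H_i).
\]
Here $\chi(v_i)\sigma(H_i)$ is locally Lipschitz in $v_i$ and vanishes at $v_i=0$ and at $v_i=v_{\max}$. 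Hence $|\chi(v_i)\sigma(H_i)|\le L\min\{v_i,v_{\max}-v_i\}$ near the endpoints, with $|\sigma(H_i)|$ bounded by Step 1.

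For the lower bound, $\dot v_i\ge-(1/h_i+L)v_i$ whenever $v_i$ is small, and a comparison argument gives $v_i(t)\ge v_i(t_0)e^{-c(t-t_0)}\ge0$. The upper bound is symmetric: with $w=v_{\max}-v_i$ one gets $\dot w\ge-(1/h_i+L)w$. Alternatively, I could use the penalty function $W=\frac12\sum_i\left(((v_i)^-)^2+((v_i-v_{\max})^+)^2\right)$ as in Theorem \ref{thm:1}: estimate $\dot W\le cW$ on a short interval beyond $t^*$ using the Lipschitz bounds together with continuity of $v_{i-1}$, and conclude via Gronwall that $W\equiv0$. I would pursue the Gronwall route because it also handles the case where $v_{i-1}$ is only continuous and sits at the boundary. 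Extending $\chi$ continuously to negative speeds is needed for this (or one works with $\psi$ defined on $\mathbb{R}$). Either way this contradicts the definition of $t^*$, so all speeds stay in $[0,v_{\max}]$ on $[0,T)$.

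\textbf{Step 3: bounds on $s_i$ and global existence.} Since $s_i=G_i+\ell_i+h_iv_i$ and $G_i=H_i+r_i-\ell_i$, the bounds from Steps 1 and 2 give
\[
\ell_i+\min\{G_i(0),r_i-\ell_i\}\le s_i\le\ell_i+h_iv_{\max}+\max\{G_i(0),r_i-\ell_i\},
\]
using $h_iv_i\ge0$ for the lower bound. This is \eqref{GrindEQ__3_9_}. The state is therefore bounded on $[0,T)$, which forces $T=+\infty$, and \eqref{GrindEQ__3_7_} follows.
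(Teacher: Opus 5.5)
Your argument is correct and is essentially the paper's proof. It rests on the same decoupled equation $\dot H_i=-h_i\beta_i\chi(v_i)\sigma(H_i)$, a Gronwall bound on the penalty $\tfrac12(v_i^-)^2+\tfrac12((v_i-v_{\max})^+)^2$ after extending $\chi$ beyond $[0,v_{\max}]$, and the resulting bounds on $H_i$, $G_i=H_i+r_i-\ell_i$ and $s_i$.

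The differences are organizational. The paper first gets global existence from an auxiliary system built with the zero extension $\tilde\chi$, which is globally Lipschitz with linear growth. It then inducts vehicle by vehicle, and it proves sign preservation of $H_i$ by uniqueness at a zero. You instead use a priori bounds on the maximal interval and the representation $H_i(t)=H_i(0)\exp(\int_0^t a_i)$. One small point: your extension of $\chi$ must be Lipschitz with zeros at $0$ and $v_{\max}$, as the paper's zero extension is, not merely continuous.
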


Theorem \ref{thm:2} shows that the set $D$ is positively invariant for \eqref{GrindEQ__2_1_}, \eqref{GrindEQ__3_1_}. For any physically relevant initial condition $v_{i} (0)\in [0,v_{\max } ]$ and $G_{i} (0)>0$, $i=1,\ldots,n$, and any leader speed $v_{0} (t)\in [0,v_{\max } ]$, the solutions of \eqref{GrindEQ__2_1_}, \eqref{GrindEQ__3_1_} are defined for all times and satisfies 
\[\begin{aligned} &{v_{i} (t)\in [0,v_{\max } ]}, \\ &{s_{i} (t)>\ell _{i} }, \end{aligned}\] 
for all $t\ge 0$ and $i=1,\ldots,n$ establishing that there are no collisions among vehicles, the speed of each vehicle is non-negative and respects the speed limit $v_{\max}$. It is worth noting, however, that the speed constraints $0 \leq v_i(t) \leq v_{\max}$, $i=1,\ldots,n$, may still remain valid under weaker assumptions on the initial spacing. In particular, for arbitrary initial conditions satisfying $ 0 \leq v_i(0) \leq v_{\max}$, $i=1,\ldots,n$,
the proof shows that the speeds remain in the interval $[0,v_{\max}]$ for all $t \geq 0$, independently of the values of $s_i(0)> \ell_i$. The condition $G_i(0)>0$ is required  to guarantee collision-free motion and positive invariance of the set~$D$. The proof of Theorem \ref{thm:2} can be found in Section \ref{sec:proofs}.

\begin{remark} \label{rem:1}
\textbf{(a)} In contrast to the linear case, whose safety set $K$ is unbounded with respect to the maximum speed, the nonlinear modification introduced through \eqref{GrindEQ__3_3_} allows the dynamics to evolve inside the constrained set. The key mechanism is the nonlinear factor $\chi(v_i)$ which scales the spacing contribution in the controller and satisfies $\chi(0)=\chi(v_{\max})=0$. Consequently, near the speed boundaries, the spacing term does not contribute to the vehicle's acceleration/deceleration while the relaxation term continues to  regulate the speed without exceeding the admissible bounds. This idea is not restricted to the proposed nonlinear controller and may be incorporated into any other linear controllers based on the constant time-headway policy in order to enforce bounded speeds. 

\textbf{(b)} Compared with the invariant set $K$ defined by means of \eqref{GrindEQ__2_15_} with $c_i=h_i$, the set $D$ imposes less restrictive conditions. More specifically, the set $K$ requires $s_i\ge r_i+h_i v_i$, while the set  $D$ requires only $s_i>\ell_i+ h_iv_i$. Since $\ell_i<r_i$, the nonlinear CTH-based controller allows a larger range of safe spacings. At the same time, $D$ incorporates speed constraints $0\leq v_i \leq v_{\max}$. The reason is the factor $\chi(v_i)$ which eliminates spacing contribution to the feedback law near the speed boundaries, and in particular the condition $\psi(0)=0$. If in \eqref{GrindEQ__3_3_} we assume that $\psi (v)\ge \underline{\psi }>0$ for all $v\ge 0$, and $\sigma(H)=H$, it is easy to show that the set $K$ given by \eqref{GrindEQ__2_15_} with $c_i=h_i$ is positively invariant for the closed-loop system  \eqref{GrindEQ__2_1_}, \eqref{GrindEQ__3_1_}. This can be proved by following analogous arguments to the proof of Theorem \ref{thm:2} and by showing that $H_{i} (t)\ge 0$ for $t\ge0$ and $i=1,\ldots,n$.
\end{remark}

\begin{remark}\label{remark:2}
Nonlinear controllers that establish collision-free movement as well as bounded speeds can be found in \cite{Karafyllis2023}, \cite{Karafyllis2025}. More specifically, the nonlinear controller designed in \cite{Karafyllis2023} for $n$ identical vehicles, establishes such properties in an input-dependent state space of the form
\begin{equation}\label{NL:state_space}
\mathcal{D}(v_0)=\{ (s,v)\in \mathbb{R}^{2n}: v_i \in (0,v_{\max}), s_i > \ell + k^{-1}\max\{0,v_{i}-v_{i-1}\}, i=1,\ldots,n \},
\end{equation} 
when the external input (leading vehicle) satisfies $\dot{v}_0(t)\ge -k v_0(t)$, $t\ge0$.
In contrast, the closed-loop system \eqref{GrindEQ__2_1_}, \eqref{GrindEQ__3_1_} evolves on  the fixed set $D$, defined  by \eqref{GrindEQ__3_6_}, under any leader speed satisfying \eqref{GrindEQ__3_4_}. While the set $\mathcal{D}(v_0)$ may include configurations that are not included in $D$, (the case where $v_{i-1}\ge v_i$ which gives $s_i>\ell$) the simple geometric structure of $D$ is directly tied to the CTH spacing policy for each vehicle and not in the relative speeds. This allows us to consider vehicles with different time-headway $h_i$, different controller gains $k_i$, as well as speeds at the boundary of $[0,v_{\max}]$. The controller in \cite{Karafyllis2023} will be further discussed in the numerical simulations results presented in Section \ref{sec:sims}.

Other controllers with safety guarantees can be found in \cite{Karafyllis2025} which use bidirectional sensing instead of a car-following architecture and all vehicles share the same desired speed, and in \cite{Berger} and \cite{Verginis2018} which use funnel and prescribed control methodologies obtaining practical string stability results without imposing additional speed constraints. Finally, controllers with collision avoidance have been recently designed using CBFs in \cite{Alan}, \cite{Ames2017}, \cite{Hamdipoor}, \cite{He2018}, \cite{Molnar}, \cite{Zhao}, and references therein. These approaches typically establish safety by proving forward invariance of the set $s_i-r_i-h_iv_i\ge0$ which is contained in $D$ since $r_i >\ell_i$.  Stability and string-stability properties are then usually investigated through local linearization, frequency-domain analysis, or numerical simulations. In contrast, the nonlinear CTH-based controller developed here, yields explicit nonlinear error dynamics for the spacing error $H_i$, which allow collision avoidance, stability, and string-stability properties to be studied directly in the nonlinear setting. It should be also noted that the set $D$ defined by \eqref{GrindEQ__3_6_} is characterized by the strict safety constraint $G_i>0$. Consequently, standard CBF formulations, which are typically based on forward invariance of closed sets are not directly applicable.
\end{remark} 

\begin{remark}\label{remark:jerk}
To obtain a jerk-manipulated realization, we augment the acceleration with an additional integrator state leading to the closed-loop system
\begin{equation} \label{GrindEQ__3_10_} 
\begin{aligned} &{\dot{s}_{i} =v_{i-1} -v_{i} } \\ &{\dot{v}_{i} =a_{i} } \\ &{\dot{a}_{i} =-\frac{1}{\tau_i}( a_{i} - F_{i} (s_{i} ,v_{i} ,v_{i-1} ))+\frac{d}{dt} \left(F_{i} (s_{i} ,v_{i} ,v_{i-1} )\right)} \end{aligned}, i=1,\ldots,n 
\end{equation} 
where $\tau_i>0$, $F_{i} $ is given by \eqref{GrindEQ__3_1_} and we assume that $\psi,\sigma \in C^{2} $. The additional acceleration dynamics provide a first-order (filtered) realization of the nominal nonlinear law $F_{i} $. 
We note that this realization results in a CACC-type controller as it employs measurements of acceleration of the preceding vehicle, and thus, it requires V2V communication to be available (see for instance \cite{Bekiaris1}, \cite{Wang}, \cite{Zhao}, \cite{Zheng2016}). In fact, such a realization can be also viewed as a backstepping design for a third-order vehicle dynamics model of the form $\tau_i \dot{a}_i + a_i = \bar{u}_i$, where the jerk input $\bar{u}_i$ is chosen as $\bar{u}_i = F_i + \tau_i \dot{F}_i$. 

To analyze the properties of closed-loop system \eqref{GrindEQ__3_10_}  we proceed as follows. Define the tracking error
\begin{equation} \label{GrindEQ__3_11_} 
w_{i} :=a_{i} -F_{i} (s_{i} ,v_{i} ,v_{i-1} ), \;\; i=1,\ldots,n ,
\end{equation} 
and notice that 
\begin{equation} \label{GrindEQ__3_12_} 
\dot{w}_{i} =\dot{a}_{i} -\frac{d}{dt} \left(F_{i} \right)=-\frac{1}{\tau_i} \left(a_{i} -F_{i} \right)=-\frac{1}{\tau_i} w_{i}  .
\end{equation} 
The latter implies that
\begin{equation} \label{GrindEQ__3_13_} 
w_{i} (t)=\exp \left(-\frac{1}{\tau_i} t  \right)w_{i} (0), t\ge 0 .
\end{equation} 
Define now the manifold
\[M:=\left\{\left(s,v,a\right)\in {\mathbb R}^{3n} :a_{i} =F_{i} (s_{i} ,v_{i} ,v_{i-1} ),\, i=1,\ldots,n\right\},\] 
where $s,v$ are defined by \eqref{GrindEQ__2_14_}, and $a=\left(a_{1} ,\ldots,a_{n} \right)$. Since $w_{i} (0)=0$ and \eqref{GrindEQ__3_13_} imply that $w_{i} (t)\equiv 0$, $i=1,\ldots,n$, the manifold $M$ is positively invariant. Hence, on the manifold $M$, the third-order system \eqref{GrindEQ__3_10_} reduces exactly to the original controller \eqref{GrindEQ__2_1_}, \eqref{GrindEQ__3_1_}. Therefore, the results established previously for the  closed-loop system \eqref{GrindEQ__2_1_}, \eqref{GrindEQ__3_1_}, including collision avoidance, bounded speeds, as well as the results that will be established later on string stability and equilibrium stability, remain valid for \eqref{GrindEQ__3_10_} whenever the initial condition belongs to $M$.  Thus, in this case, model \eqref{GrindEQ__3_10_} evolves in the state-space 
\begin{equation} \label{GrindEQ__3_14_} 
\tilde{D}:=\left\{(s,v,a)\in {\mathbb R}^{3n} :(s,v)\in D,a_{i} =F_{i} (s_{i} ,v_{i} ,v_{i-1} ),i=1,\ldots,n\right\} 
\end{equation} 
and the following implication holds
\begin{equation} \label{GrindEQ__3_15_} 
\left(s(0),v(0),a(0)\right)\in \tilde{D}\Rightarrow \left(s(t),v(t),a(t)\right)\in \tilde{D},\;\; t\ge 0 .
\end{equation} 
The above can also be seen by transforming system \eqref{GrindEQ__3_10_} using \eqref{GrindEQ__3_11_} to obtain
\begin{equation} \label{GrindEQ__3_16_} 
\begin{aligned} & {\dot{s}_{i} =v_{i-1} -v_{i} } \\ &{\dot{v}_{i} =F_{i} (s_{i} ,v_{i} ,v_{i-1} )+w_{i} } \\ &{\dot{w}_{i} =-\frac{1}{\tau_i}w_{i} } \end{aligned} 
\end{equation} 
which shows that the closed-loop system \eqref{GrindEQ__3_10_}  can be viewed as the nominal nonlinear closed-loop system \eqref{GrindEQ__2_1_}, \eqref{GrindEQ__3_1_} perturbed by an exponentially decaying input $w_{i} $.  {The condition $w_i(0)=0$ corresponds to the case where the platoon is already operating under the nonlinear CTH-based controller and the actuator state is initialized consistently with the controller output}. For $w_{i} (0)\ne 0$, the trajectory does not start on the invariant manifold $M$. Nevertheless, the acceleration realization error $w_{i} (t)$ decays exponentially, and therefore, the solutions of the third-order closed-loop system \eqref{GrindEQ__3_1_} converge exponentially towards the ones of the nominal dynamics \eqref{GrindEQ__2_1_}-\eqref{GrindEQ__3_1_}. Hence, the properties of \eqref{GrindEQ__3_1_} are recovered asymptotically.  Derivation of exact safety guarantees requires additional robustness conditions, for example, imposing a sufficiently small mismatch $w_{i} (0)$, or equivalently, sufficiently small $\tau_i\left|w_{i} (0)\right|$. 

We further note that, in principle, the properties of the nonlinear CTH-based controller \eqref{GrindEQ__3_1_} should be robust to acceleration dynamics of the form $\tau_i \dot{a}_i + a_i = u_i$, for sufficiently small $\tau_i$ and $w_i(0)$. This claim relies on a singular perturbation argument in combination with linear results showing that when $\tau\leq h/2$ string stability is preserved; while it is confirmed in the numerical simulation results presented in Section \ref{sec:sims}. (A detailed proof of this claim is however beyond the scope of this paper.)
 
\end{remark}

\section{String Stability Under the Nonlinear CTH-Based Controller}\label{sec:ss}

An important property of ACC-equipped vehicles is that of string stability, which is a performance requirement that guarantees improved transient behavior of the vehicles and disturbance attenuation along the vehicles in the platoon. For the linear CTH-based controller \eqref{GrindEQ__2_1_} with $ k_{v,i}=1/h_i$, the only requirement needed for string stability is that  $k_{p,i}>0$. We shall show that an analogous condition, namely, that $\beta_i>0$ is sufficient for string stability for the nonlinear CTH-based controller \eqref{GrindEQ__3_1_}. Here we use the notion of $L^{p} $ string stability initially presented in \cite{Ploeg2014} as well as the trajectory based approach of \cite{Karafyllis2026} in the proofs.

\begin{theorem}\label{thm:3}
Let    $v^{*} \in \left[0,v_{{\max}} \right]$ be given and define the set $D\subset {\mathbb R}^{2n} $ by means of \eqref{GrindEQ__3_6_}. Suppose also that the leading vehicle's speed $v_0(t)$ satisfies \eqref{GrindEQ__3_4_}. Then, {there exists a non-increasing function $\kappa:\mathbb{R}_+\to(0,+\infty)$} such that for every $p\in[1,+\infty)$, the solution of the initial value problem \eqref{GrindEQ__2_1_}, \eqref{GrindEQ__3_1_} with initial condition $\left(s(0),v(0)\right)\in D$, satisfies the following estimates, for all $t\ge 0$ and all $i=1,\ldots ,n$,  
\begin{equation} \label{GrindEQ__4_1_} 
\|v_{i}-v^{*} \|_{[0,t],p}  \le \|v_{i-1}-v^{*} \|_{[0,t],p}   + \left(\frac{h_i}{p}\right)^{1/p} \left(|v_{i}(0)-v^{*} |+ { \frac{|H_{i} (0)|}{h_i \kappa(|H_i(0)| )} }  \right) ,
\end{equation} 
\begin{equation} \label{GrindEQ__4_2_} 
\left\| v_{i} -v^{*} \right\| _{\left[0,t\right],\infty } \le \left\| v_{i-1} -v^{*} \right\| _{\left[0,t\right],\infty } +\left|v_{i} \left(0\right)-v^{*} \right|+{\frac{|H_{i} (0)|}{h_i \kappa(|H_i(0)| )}},
\end{equation} 
\begin{equation}\label{spacing:i}
\begin{aligned}
\left\|\delta_i\right\|_{\left[0,t\right],\infty }& \leq \frac{h_i }{ h_{i-1}}\left\|\delta_{i-1}\right\|_{\left[0,t\right],\infty }+  h_i  |v_i(0)-v^*|+\frac{h_i }{ h_{i-1}}|H_{i-1}(0)|+ {\left(1+\frac{1}{\kappa(|H_i(0)| )}\right)}|H_i(0)|,\quad i=2,\ldots,n,
\end{aligned}
\end{equation}
\begin{equation}\label{spacing:1}
\left\|\delta_1\right\|_{\left[0,t\right],\infty } \leq  h_1 \left\|v_0-v^*\right\|_{\left[0,t\right],\infty }+ h_1 |v_1(0)-v^*|+{\left(1+\frac{1}{\kappa(|H_1(0)| )}\right)}|H_1(0)|,
\end{equation}
where $\delta_i:=s_i-r_i-h_iv^*$, and
\begin{equation}\label{H}
 \| H_i \|_{[0,t],\infty}\leq |H_i(0)|.
\end{equation}
\end{theorem}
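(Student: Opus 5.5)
The approach rests on the observation made in the introduction: the spacing error $H_i$ obeys a scalar ODE that does not involve the preceding vehicle. Differentiating \eqref{GrindEQ__3_2_} along \eqref{GrindEQ__2_1_}, \eqref{GrindEQ__3_1_}, the relaxation term $\frac{1}{h_i}(v_{i-1}-v_i)$ cancels exactly, giving
\[
\dot H_i=-h_i\beta_i\chi(v_i(t))\sigma(H_i),\qquad i=1,\ldots,n .
\]
By Theorem \ref{thm:2}, the solution stays in $D$ for all $t\ge0$, so $v_i(t)\in[0,v_{\max}]$ and hence $\chi(v_i(t))\ge 0$. This is a scalar ODE with a nonnegative time-varying coefficient, for which $H=0$ is an equilibrium. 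By uniqueness, $H_i$ cannot cross zero. By \eqref{s2}, $\dot H_i$ has the sign opposite to $H_i$, so $|H_i(t)|$ is non-increasing. This gives \eqref{H} immediately. It also shows that $H_i$ is monotone, so
\[
\int_0^t|\dot H_i(\tau)|\,d\tau=|H_i(0)-H_i(t)|\le |H_i(0)| .
\]

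Next I would rewrite the speed dynamics using the same cancellation. Since $h_i\beta_i\chi(v_i)\sigma(H_i)=-\dot H_i$, the error $e_i:=v_i-v^*$ satisfies the linear filter
\[
h_i\dot e_i=-e_i+e_{i-1}-\dot H_i .
\]
Here $e_0=v_0-v^*$, and $H_i$ now plays the role of an external input of bounded total variation. Variation of constants gives
\[
e_i(t)=e^{-t/h_i}e_i(0)+\frac{1}{h_i}\int_0^t e^{-(t-\tau)/h_i}\bigl(e_{i-1}(\tau)-\dot H_i(\tau)\bigr)d\tau .
\]
I would then take $L^p$ norms on $[0,t]$ term by term, using Young's convolution inequality. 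The kernel $h_i^{-1}e^{-t/h_i}$ has unit $L^1$ norm, so the $e_{i-1}$ term contributes at most $\|v_{i-1}-v^*\|_{[0,t],p}$. The free response contributes $(h_i/p)^{1/p}|e_i(0)|$. For the $\dot H_i$ term I would use $\|k*g\|_p\le\|k\|_p\|g\|_1$, which bounds it by $h_i^{-1}(h_i/p)^{1/p}|H_i(0)|$. This yields \eqref{GrindEQ__4_1_} with $\kappa\equiv1$, which is an admissible non-increasing positive function. If one wants a sharper constant, the sector condition \eqref{s4} with $R=|H_i(0)|$ could be used to define a genuinely $|H_i(0)|$-dependent $\kappa$, but I do not think this is needed. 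The case $p=\infty$, i.e.\ \eqref{GrindEQ__4_2_}, follows the same way, since each kernel term is bounded by its $L^\infty$ or $L^1$ norm.

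For the spacing estimates I would use the identity $\delta_i=H_i+h_ie_i$. Together with \eqref{H} and \eqref{GrindEQ__4_2_}, it gives
\[
\|\delta_i\|_\infty\le |H_i(0)|+h_i\|e_{i-1}\|_\infty+h_i|e_i(0)|+\frac{|H_i(0)|}{\kappa(|H_i(0)|)} .
\]
For $i=1$ the term $h_1\|e_0\|_\infty$ is exactly the leader input term in \eqref{spacing:1}. For $i\ge2$ I would substitute $h_{i-1}e_{i-1}=\delta_{i-1}-H_{i-1}$ and use \eqref{H} for $H_{i-1}$, which gives $h_i\|e_{i-1}\|_\infty\le\frac{h_i}{h_{i-1}}\bigl(\|\delta_{i-1}\|_\infty+|H_{i-1}(0)|\bigr)$ and hence \eqref{spacing:i}.

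The main obstacle I expect is justifying the sign-preservation and monotonicity of $H_i$ rigorously. This step needs $\chi\ge0$ along the trajectory, which is where the invariance of $D$ from Theorem \ref{thm:2} enters, and it needs uniqueness at $H=0$, which holds because the right-hand side is $C^1$. With these in place, every remaining step is linear convolution estimates.
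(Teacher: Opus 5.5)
Your proof is correct, and it shares the paper's overall architecture: variation of constants for $v_i-v^*$, Young's convolution inequality with the unit-mass kernel $h_i^{-1}e^{-t/h_i}$, and $\delta_i=H_i+h_i(v_i-v^*)$ for the spacing bounds. The key step, bounding the $L^1$ norm of the coupling term $\beta_i\chi(v_i)\sigma(H_i)$, is done differently.

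The paper uses the sector condition \eqref{s4} through $\kappa(R)=\inf_{0<|H|<R}\sigma(H)/H$ to obtain $|H_i(t)|\le |H_i(0)|Z_i(t)$, with $Z_i=\exp\bigl(-h_i\beta_i\kappa\int_0^t\chi(v_i)\bigr)$. It then bounds $|\sigma(H_i)|\le|H_i|$ and integrates $\chi Z_i=-\dot Z_i/(h_i\beta_i\kappa)$ exactly. You instead recognize the coupling term as $-\dot H_i/h_i$. Its $L^1$ norm is therefore $h_i^{-1}$ times the total variation of the monotone function $H_i$, namely $(|H_i(0)|-|H_i(t)|)/h_i$.

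Your route is shorter, does not use \eqref{s4} at all, and is sharper. By \eqref{s2}--\eqref{s3}, every sector constant satisfies $\kappa\le 1$, so the paper's choice gives $1/\kappa\ge1$, and its estimates follow from yours with $\kappa\equiv1$. Your closing aside is therefore backwards: building an $|H_i(0)|$-dependent $\kappa$ from \eqref{s4} can only weaken the constant, never sharpen it.

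What the paper's route buys is the decay estimate $|H_i(t)|\le|H_i(0)|\exp\bigl(-h_i\beta_i\kappa(|H_i(0)|)\int_0^t\chi(v_i)\bigr)$. It reuses this in Proposition~\ref{prop:2} to get exponential rates once $\chi(v_i)$ is bounded below; your total-variation bound gives no rate.

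The step you flag as the main obstacle is mild. Sign preservation is already proved inside Theorem~\ref{thm:2}, and you can bypass it anyway. Since $H_i\dot H_i\le 0$, and $\dot H_i=0$ wherever $H_i=0$ (so $|H_i|$ is differentiable there with derivative $0$), you get $\frac{d}{dt}|H_i|=-|\dot H_i|$ everywhere. Hence $\int_0^t|\dot H_i|\,d\tau=|H_i(0)|-|H_i(t)|$ directly.
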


\begin{remark}
Theorem \ref{thm:3} establishes string stability results for the nonlinear CTH-based controller. The string stability condition for the nonlinear CTH-based controller coincides with that of the linear CTH controller \eqref{GrindEQ__2_1_} with $ k_{v,i}=1/h_i$, namely that $k_{p,i}=\beta_i>0$. This is not a consequence of linearization, but rather of the structure of the nonlinear estimates used in the proof of Theorem \ref{thm:3}. In particular, the nonlinear coupling term is bounded and enters the analysis through comparison arguments. As a result, the same   condition arises. The proof of Theorem \ref{thm:3} can be found in Section \ref{sec:proofs}.

Estimates \eqref{GrindEQ__4_1_} and \eqref{GrindEQ__4_2_} establish $L^{p} $ string stability for $p\ge1$. 
The notion of $L^{p} $ string stability encompasses the upstream disturbance attenuation, the external input of the leading vehicle, and perturbations on initial conditions. Notice that when all vehicles are initially at equilibrium position (discussed in the next section), namely when $H_{i} \equiv 0$, $v_{i} \equiv v^{*} $, $i=1,\ldots,n$, the additive perturbation terms $\frac{|H_i(0)|}{h_i\kappa(|H_i(0)| )}  + |v_{i} \left(0\right)-v^{*}| $  in \eqref{GrindEQ__4_1_} and \eqref{GrindEQ__4_2_}, respectively, vanish, resulting in the standard estimates of string stability (see \cite{Ploeg2014}). Estimate \eqref{H} shows that the spacing error $H_i$ is always bounded. This is immediate by noticing that $\dot{H}_i=-h_i\beta_i\chi(v_i)\sigma(H_i)$ which also shows that spacing errors of successive vehicles $H_i$ and $H_{i-1}$ are independent of each other. This is also an inherent property of the linear controller \eqref{CTH:linear}.  Moreover, this independence property is lost   with actuator dynamics (see Remark \ref{remark:jerk}), where the spacing errors of successive vehicles become dynamically coupled through the actuator lag. This is also the case with the linear CTH controller (see the discussion in \cite[Chapter 6]{Rajamani2012}).

Estimate \eqref{spacing:i} shows the amplification or attenuation of spacing disturbances between successive vehicles. The factor $h_i /h_{i-1}$ quantifies this amplification and arises naturally from the heterogeneous time-headway parameters $h_i$, as it is the case with linear, CTH-type controllers (see e.g. \cite{Xiao}). Indeed, the spacing error $\delta_i$ scales the speed error by the headway $h_i $. Consequently, the same speed perturbation corresponds to a larger spacing deviation for vehicles with larger time-headway (note that $\delta_i$ can be written as $\delta_i=H_i+h_i(v_i-v^*)$, with $H_i$ evolving independently of $H_{i-1}$). Thus, non-amplification of spacing disturbances is associated with the condition  $h_i /h_{i-1}\leq1$. This is exactly the case of homogeneous time-headway $h_i=h$,  $i=1,\ldots,n$, for which, in the absence of initial-condition perturbations, \eqref{spacing:i} reduces to $\left\|\delta_{i}\right\|_{\left[0,t\right],\infty }\leq\left\|\delta_{i-1}\right\|_{\left[0,t\right],\infty }$, $i=2,\ldots,n$.
 
\vspace{1em}
\end{remark}

\begin{remark}
The string stability estimates of Theorem \ref{thm:3} can be directly transferred to the third-order case of \eqref{GrindEQ__3_16_}. Indeed, for the case $w_i(0)\neq0$ discussed in Remark \ref{remark:jerk}, we have that $w_i(t)=\exp(-q_i t)w_i(0)$, and thus, $\|w_i\|_{[0,t],p}\leq \left(\frac{1}{pq_i}\right)^{1/p}|w_i(0)|$, $p\in[1,+\infty)$, which by following the same approach as in the proof of Theorem \ref{thm:3}, give the estimates
\begin{equation}  
\|v_{i}-v^{*} \|_{[0,t],p}  \le \|v_{i-1}-v^{*} \|_{[0,t],p}   + \left(\frac{h_i}{p}\right)^{1/p} \left( \frac{|H_{i} (0)|}{h_i\kappa(|H_i(0)| )} +|v_{i}(0)-v^{*} | \right) +h_i \left(\frac{1}{pq_i}\right)^{1/p}|w_i(0)|,
\end{equation} 
\begin{equation} 
\left\| v_{i} -v^{*} \right\| _{\left[0,t\right],\infty } \le \left\| v_{i-1} -v^{*} \right\| _{\left[0,t\right],\infty } +\left|v_{i} \left(0\right)-v^{*} \right|+\frac{|H_{i} (0)|}{h_{i}\kappa(|H_i(0)| ) }+\frac{|w_i(0)|}{q_i}.
\end{equation} 
\end{remark}

The string stability estimates established in Theorem \ref{thm:3} quantify the propagation of disturbances on speed and spacing along the platoon. We next complement these estimates by studying the tracking of a time-varying leader trajectory. More specifically, we investigate whether the platoon can follow the leader motion while maintaining small deviations in spacing and speed. To this end, we derive explicit bounds for the tracking errors in terms of the initial conditions and the leader acceleration. In particular, the obtained estimates show that the tracking errors remain small when the leader acceleration is small, while larger variations of the leader speed lead to proportionally larger, but still bounded, deviations.

First, we need the following lemma whose proof can be found in Section \ref{sec:proofs}:

\begin{lemma} \label{lem:1}
Let $\underline{v},\bar{v}>0$ such that $0<\underline{v}\le v_{0} (t)\le \bar{v}<v_{\max } $ for all $t\ge 0$ and let $(s(0),v(0)\in \interior\left(D\right)$. Then, there exists $\delta :=\delta \left(s(0),v(0)\right)>0$ with $\delta <\min \left\{\underline{v},v_{\max } -\bar{v}\right\}$ such that the solution of \eqref{GrindEQ__2_1_}, \eqref{GrindEQ__3_1_} exists for all times and satisfies $\delta \le v_{i} (t)\le v_{\max } -\delta $, for all $t\ge 0$ and $i=1,\ldots,n$.
\end{lemma}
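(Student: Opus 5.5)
The plan is to argue by induction along the string, using a scalar comparison argument for each $v_i$, applied once near the lower bound $0$ and once near the upper bound $v_{\max}$. Global existence and the a priori bounds come from Theorem \ref{thm:2}. Since $(s(0),v(0))\in\interior(D)\subset D$, the solution exists for all $t\ge 0$ and $v_i(t)\in[0,v_{\max}]$. By \eqref{GrindEQ__3_8_} (equivalently \eqref{H}), $|H_i(t)|\le R_i:=|H_i(0)|$ for all $t\ge0$, so by \eqref{s3} we get $|\sigma(H_i(t))|\le R_i$. Moreover $v_i(0)\in(0,v_{\max})$ because the initial state is interior. These constants $R_i$ are what make $\delta$ depend on the initial condition.

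\textbf{Bounds on the nonlinear term.} Let $\Psi:=\max_{[0,v_{\max}]}\psi$. Since $\psi\in C^1$ and $\psi(0)=0$ (the property invoked in Remark \ref{rem:1}(b), which is also what makes the lower speed bound in Theorem \ref{thm:2} work), there is $L>0$ with $\psi(v)\le Lv$ on $[0,v_{\max}]$. This gives two linear bounds:
\[
|\beta_i\chi(v_i)\sigma(H_i)|\le \beta_i v_{\max}LR_i\,v_i,\qquad |\beta_i\chi(v_i)\sigma(H_i)|\le \beta_i\Psi R_i\,(v_{\max}-v_i).
\]
Inserting them into $\dot v_i=\frac{1}{h_i}(v_{i-1}-v_i)+\beta_i\chi(v_i)\sigma(H_i)$, and writing $w_i:=v_{\max}-v_i$, we obtain
\[
\dot v_i\ge \frac{v_{i-1}}{h_i}-a_iv_i,\qquad \dot w_i\ge \frac{w_{i-1}}{h_i}-b_iw_i,
\]
with $a_i=\frac1{h_i}+\beta_iv_{\max}LR_i$ and $b_i=\frac1{h_i}+\beta_i\Psi R_i$.

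\textbf{Induction.} Set $\delta_0:=\min\{\underline v,\,v_{\max}-\bar v\}>0$, so that $v_0(t)\ge\delta_0$ and $v_{\max}-v_0(t)\ge\delta_0$ for all $t$. Assume $v_{i-1}(t)\ge\delta_{i-1}$ and $w_{i-1}(t)\ge\delta_{i-1}$ for all $t\ge0$. Then $\dot v_i\ge \delta_{i-1}/h_i-a_iv_i$. The scalar comparison lemma, or simply the observation that $\dot v_i>0$ whenever $v_i<\delta_{i-1}/(h_ia_i)$, yields
\[
v_i(t)\ge\min\Bigl\{v_i(0),\,\frac{\delta_{i-1}}{h_ia_i}\Bigr\}.
\]
In the same way, $w_i(t)\ge\min\{w_i(0),\,\delta_{i-1}/(h_ib_i)\}$. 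Let $\delta_i$ be the smaller of these two positive numbers. Finally, take
\[
\delta:=\tfrac12\min\{\delta_0,\delta_1,\ldots,\delta_n\},
\]
which satisfies $\delta<\min\{\underline v,v_{\max}-\bar v\}$ as required.

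\textbf{Main obstacle.} The delicate point is obtaining the linear-in-$v_i$ lower bound on the nonlinear term near $v_i=0$. This relies on $\psi(0)=0$ together with the Lipschitz bound on $\psi$. If $\psi(0)>0$ were allowed, the term $\beta_i\chi\sigma(H_i)$ could be strictly negative at $v_i=0$, and the comparison would fail. I would therefore make this use of $\psi(0)=0$ explicit. By contrast, the upper bound is automatic from the factor $(v_{\max}-v_i)$ in $\chi$.
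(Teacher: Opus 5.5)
Your proposal is correct and follows essentially the same route as the paper. Both arguments run a string-wise induction with scalar comparison for $v_i$ and $v_{\max}-v_i$, relying on $\psi(0)=0$ and a Lipschitz bound on $\psi$. The only differences are minor. You bound $|\sigma(H_i)|$ by $|H_i(0)|$ on both sides, whereas the paper uses $r_i-\ell_i$ from below and $\max\{H_i(0),0\}$ from above. Your factor $\tfrac12$ gives the strict inequality $\delta<\min\{\underline{v},v_{\max}-\bar{v}\}$, which the paper's choice only secures non-strictly.
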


Lemma \ref{lem:1} gives a stronger characterization of bounded speeds than Theorem \ref{thm:2}. When the leader's speed lies in an arbitrary compact subinterval of $(0,v_{\max})$, the speeds of all following vehicles are bounded away from zero and $v_{\max}$. The latter and definition \eqref{GrindEQ__3_3_} imply that there exists a constant $\underline{ \chi} >0$ such that $\chi(v_i(t))\ge\underline{\chi}$ for all $t\ge0$ and $i=1,\ldots,n$. This property allows us to  prove the following proposition.

\begin{proposition} \label{prop:2}
{Let $\underline{v}  ,\bar{v}  >0$ such that $0<\underline{v}  \le v_{0} (t)\le \bar{v}<v_{\max } $ for all $t\ge 0$ and let $(s(0),v(0))\in int\left(D\right)$. Assume also that $\dot{v}_{0} \in L_{\textrm{loc}}^{+\infty } \left({\mathbb R}_{+} \right)$. Then, for each $i=1,\ldots,n$ there exist constants $\delta ,C_{i}>0$ and a non-increasing function  $\kappa:\mathbb{R}_+\to(0,+\infty)$ such that }
\begin{equation} \label{GrindEQ__4_3_} 
\left|v_{i} (t)-v_{0} (t)\right|+\left|H_{i} (t)\right|\le C_{i} \exp \left(-\mu t\right)\sum _{j=1}^{i}\left(\left|v_{j} (0)-v_{0} (0)\right|+\left|H_{j} (0)\right|\right) +\Gamma _{i} \left\| \dot{v}_{0} \right\| _{[0,t],\infty } , t\ge 0 ,
\end{equation} 
where $0<\mu <\min \left\{\frac{1}{h_{i} } , h_i \beta _{i} \mathop{\min }\limits_{v\in [\delta ,v_{\max } -\delta ]} \left\{\chi (v)\right\}\kappa(|H_i(0)| )\right\}$ and $\Gamma _{i} =\sum_{j=1}^i h_j >0$.
\end{proposition}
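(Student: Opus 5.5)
The plan is to exploit the cascade structure of the closed loop in the coordinates $(H_i, e_i)$, where $e_i := v_i - v_{0}$ is the deviation from the leader speed.

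\textbf{Step 1: uniform speed bounds.} Lemma \ref{lem:1} gives $\delta>0$ with $\delta\le v_i(t)\le v_{\max}-\delta$ for all $t$. Hence
\[
\chi(v_i(t))\ge \underline{\chi}:=\min_{v\in[\delta,v_{\max}-\delta]}\chi(v)>0 .
\]

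\textbf{Step 2: exponential decay of $H_i$.} Differentiating \eqref{GrindEQ__3_2_} along \eqref{GrindEQ__2_1_}, \eqref{GrindEQ__3_1_}, the relative-speed terms cancel:
\[
\dot H_i = v_{i-1}-v_i - h_i\Big(\tfrac{1}{h_i}(v_{i-1}-v_i)+\beta_i\chi(v_i)\sigma(H_i)\Big) = -h_i\beta_i\chi(v_i)\sigma(H_i).
\]
By \eqref{H}, $|H_i(t)|\le |H_i(0)|$, so the sector condition \eqref{s4} applies with $R=|H_i(0)|$. Define $\kappa(R):=m_R$; it is non-increasing, replacing $m_R$ by $\inf_{R'\le R} m_{R'}$ if needed, where \eqref{s3} gives $m_R\le 1$ and continuity of $\sigma$ keeps the infimum positive. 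Then
\[
\tfrac{d}{dt}H_i^2\le -2h_i\beta_i\underline{\chi}\,\kappa(|H_i(0)|)\,H_i^2 ,
\]
so $|H_i(t)|\le e^{-\lambda_i t}|H_i(0)|$ with $\lambda_i:=h_i\beta_i\underline{\chi}\,\kappa(|H_i(0)|)$.

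\textbf{Step 3: equation for the speed error.} From \eqref{GrindEQ__3_1_},
\[
\dot v_i=\tfrac1{h_i}(v_{i-1}-v_i)-\tfrac1{h_i^2}\dot H_i .
\]
Equivalently, $\frac{d}{dt}\big(v_i+\tfrac{1}{h_i^2}H_i\big)$ relaxes toward $v_{i-1}$ at rate $1/h_i$. With $e_i=v_i-v_0$ this becomes
\[
\dot e_i=-\tfrac1{h_i}e_i+\tfrac1{h_i}e_{i-1}-\dot v_0+\beta_i\chi(v_i)\sigma(H_i), \qquad e_0\equiv 0 .
\]
Variation of constants then gives
\[
|e_i(t)|\le e^{-t/h_i}|e_i(0)|+\int_0^t e^{-(t-\tau)/h_i}\Big(\tfrac1{h_i}|e_{i-1}(\tau)|+\beta_i\bar\chi|H_i(\tau)|\Big)d\tau+h_i\|\dot v_0\|_{[0,t],\infty},
\]
where $\bar\chi=\max_{[0,v_{\max}]}\chi$ and \eqref{s3} was used to bound $|\sigma(H_i)|\le|H_i|$.

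\textbf{Step 4: induction on $i$.} Take $0<\mu<\min\{1/h_i,\lambda_i\}$. A convolution of $e^{-t/h_i}$ with $e^{-\lambda_i\tau}$ is bounded by a constant times $e^{-\mu t}$, with constant roughly $1/(\min\{1/h_i,\lambda_i\}-\mu)$. Assume inductively that
\[
|e_{i-1}(t)|\le C e^{-\mu t}\sum_{j<i}(\cdots)+\Gamma_{i-1}\|\dot v_0\|_{[0,t],\infty}.
\]
The $\dot v_0$-part of $e_{i-1}$, after convolution with $\tfrac1{h_i}e^{-(t-\tau)/h_i}$ (whose integral is at most $1$), contributes at most $\Gamma_{i-1}\|\dot v_0\|_{[0,t],\infty}$. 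Adding the direct term $h_i\|\dot v_0\|_{[0,t],\infty}$ gives $\Gamma_i=\sum_{j\le i}h_j$. The transient parts combine into $C_ie^{-\mu t}\sum_{j\le i}(|e_j(0)|+|H_j(0)|)$. Finally, add the bound $|H_i(t)|\le e^{-\lambda_i t}|H_i(0)|\le e^{-\mu t}|H_i(0)|$ from Step 2 to obtain \eqref{GrindEQ__4_3_}.

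\textbf{Main obstacle.} The delicate point is the bookkeeping of decay rates along the cascade. The $\mu$ at step $i$ must also be admissible for all earlier vehicles, so in practice $\mu$ is taken below the minimum over $j\le i$ of $\min\{1/h_j,\lambda_j\}$, or the constants $C_i$ must absorb the rate mismatch. A second issue is avoiding an equality $\mu=1/h_i$ or $\mu=\lambda_i$, which would produce a $te^{-\mu t}$ term. The strict inequalities in the stated range of $\mu$ handle this, with $C_i$ blowing up as $\mu$ approaches the bound. Note also that $\kappa$ here depends on $|H_i(0)|$ through \eqref{H}, which is why the statement's rate involves $\kappa(|H_i(0)|)$.
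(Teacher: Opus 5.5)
Your proposal is correct and follows the paper's proof essentially step for step: Lemma~\ref{lem:1} to get $\underline{\chi}$, exponential decay of $H_i$ at rate $h_i\beta_i\underline{\chi}\kappa(|H_i(0)|)$, variation of constants for $v_i-v_0$, and induction with $\Gamma_i=\Gamma_{i-1}+h_i$, with $\mu$ taken below the minimum of $\{1/h_j,\,h_j\beta_j\underline{\chi}\kappa(|H_j(0)|)\}$ over upstream vehicles (the paper takes the minimum over all $j=1,\ldots,n$). Your alternative of letting $C_i$ absorb the rate mismatch does not work, since a slower upstream decay rate cannot be dominated by $Ce^{-\mu t}$; two further harmless slips are that Step 3 should read $\dot v_i=\tfrac{1}{h_i}(v_{i-1}-v_i)-\tfrac{1}{h_i}\dot H_i$ (not $\tfrac{1}{h_i^2}$), and that $\kappa$ should be defined directly as $\kappa(R)=\inf_{0<|H|\le R}\sigma(H)/H$, which is non-increasing and positive by \eqref{s4}, rather than as an infimum of arbitrarily chosen constants $m_{R'}$, which could vanish.
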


The estimate obtained above provides a bound on the tracking errors $v_{i} \left(t\right)-v_{0} \left(t\right)$ and $H_{i} \left(t\right)=s_{i} \left(t\right)-r_{i} - h_i v_{i} \left(t\right)$ in terms of the initial deviations and the variation of the leader speed. In particular, it shows that both the speed error and the spacing policy error decay exponentially from their initial values, up to a residual term proportional to  $\| \dot{v}_{0} \| _{\left[0,t\right], \infty}$. The constants $\Gamma_i$ quantify the sensitivity of the $i$-th vehicle to variations of the leader speed and are equal to the cumulative headway $\sum_{j=1}^i h_j $. This shows that the tracking-error bound increases along the platoon. This reflects the cumulative delay induced by the constant time-headway policy.

Proposition \ref{prop:2} should not be interpreted as string stability since it concerns tracking error bounds with respect to the external input $\dot{v}_0$, rather than amplification of inter-vehicle disturbances. This result should be interpreted as a practical tracking property with respect to time-varying leader speeds. Indeed, in the absence of leader acceleration (i.e., when $v_{0} $ is constant), the estimate reduces to exponential convergence of both errors to zero. When the leader speed varies, the tracking error remains bounded, with a bound that scales with the magnitude of the leader acceleration. Proposition \ref{prop:2} will be used also in the following section to establish certain stability properties.

\section{Lyapunov Stability}\label{sec:ls}

Having established existence of solutions for the nonlinear CTH-based controller  \eqref{GrindEQ__3_1_}, invariance of the set $D$, and string stability, it remains to study the stability of the equilibria of the closed-loop system \eqref{GrindEQ__2_1_}, \eqref{GrindEQ__3_1_}.\\[1em]

\noindent \textbf{Equilibria: }First, we characterize the equilibria of the closed-loop system \eqref{GrindEQ__2_1_}, \eqref{GrindEQ__3_1_}. As in the linear CTH-based controller (and other Follow-the-Leader models), the spacing/speed equilibrium configurations correspond to the leader's constant speed $v_{0} (t)=v^{*} \in [0,v_{\max } ]$ and uniform follower behavior. At equilibrium, the closed-loop system \eqref{GrindEQ__2_1_}, \eqref{GrindEQ__3_1_} implies that all vehicles have the same constant speed
\[v_{i} =v^{*} ,i=1,\ldots,n\] 
and satisfy
\[\chi (v^{*} )\sigma(s_i-r_i-h_iv^*) =0,\, \, \, i=1,\ldots,n.\] 
Due to definition \eqref{GrindEQ__3_3_}, we distinguish the following cases:\\

\noindent \textbf{Case 1:} $v^{*} \in (0,v_{\max } )$. 

\noindent In this case, definition \eqref{GrindEQ__3_3_} gives $\chi (v^{*} )>0$, and consequently, since $\sigma(H)=0$ implies that $H=0$, by \eqref{s1}, it follows that $s_i-r_i-h_iv^* =0$ for $i=1,\ldots,n$. The latter gives that  $s_{i}^* =r_{i} + {h_{i}} v^{*} $ for $i=1,\ldots,n$. Hence, the equilibrium reduces to the singleton
\begin{equation} \label{GrindEQ__5_1_} 
w^{*} =\left(s_{1}^{*} ,\ldots,s_{n}^{*} ,v^{*} 1_{n} \right).
\end{equation} 
\textbf{Case 2: }$v^{*} =0$.

\noindent In this case, definition \eqref{GrindEQ__3_3_} and the fact that $\psi (0)=0$ give that $\chi (v^{*} )=\chi (0)\equiv 0$, and the equilibrium reduces to the condition $v_{i} =0$, with $\sigma(s_i-r_i-h_iv^*)$ arbitrary, for $i=1,\ldots,n$. However, due to Theorem~\ref{thm:2}, the equilibrium belongs to the admissible set $D$ which requires $s_{i} >\ell _{i} + {h_{i}} v_{i} $, $i=1,\ldots,n$. The latter reduces to $s_{i} >\ell _{i} $. Hence, the equilibrium set is given by
\begin{equation} \label{GrindEQ__5_2_} 
E_{0} =\left\{(s,v)\in D:v_{i} =0, i=1,\ldots,n\right\} 
\end{equation} 
which is an unbounded set, since the spacings $s_i$ are only required to  satisfy $s_i>\ell_i$. \\

\noindent \textbf{Case 3:} $v^{*} =v_{\max } $.

\noindent In this case, definition \eqref{GrindEQ__3_3_} gives that $\chi (v^{*} )=\chi (v_{\max } )\equiv 0$, and the equilibrium reduces to the conditions $v_{i} =v_{\max } $ and no restriction on $\sigma(s_i-r_i-h_iv^*)$, for every $i=1,\ldots,n$. Hence, the equilibrium set in this case is given by
\begin{equation} \label{GrindEQ__5_3_} 
E_{v_{\max } } =\left\{(s,v)\in D:v_{i} =v_{\max } ,s_{i} \ge\ell _{i} + {h_{i}} v_{\max } ,i=1,\ldots,n\right\} 
\end{equation} 
which is also an unbounded set (since the spacings satisfy $s_i>\ell_i+h_i v_{\max}$).\\

\begin{remark}
The qualitative difference between the three cases stems from the vanishing of $\chi \left(v\right)$ at $v_{\max} $ and at zero. While for $v^{*} \in (0,v_{\max} )$ the equilibrium is uniquely determined by the spacing condition $s_i^*=r_i+h_iv^*$, at $v^{*} =v_{\max} $ and $v^{*} =0$ this condition is no longer enforced, leading to a continuum of admissible equilibria. This reflects the fact that, at maximal speed, the spacing no longer influences the acceleration dynamics and the vehicles tend to equalize their speed. Completely analogous is the case when $v^{*} =0$. The qualitative difference between the equilibrium cases can be understood from the structure of the dynamics. The relative speeds term $ {\frac{1}{h_{i} }} \left(v_{i-1} -v_{i} \right)$ enforces equalization of speeds along the platoon, so that all vehicles must share a common speed at equilibrium.
\end{remark}

\begin{theorem}\label{thm:4}
Consider a platoon of $n$ vehicles described by \eqref{GrindEQ__2_1_}, \eqref{GrindEQ__3_1_} that evolves on the set $D$ defined by means of \eqref{GrindEQ__3_6_}. Assume that the leader speed is constant, $v_{0} (t)\equiv v^{*} \in [0,v_{\max } ]$. Then, one of the following holds: 

\begin{enumerate}
\item  { If $v^{*} \in (0,v_{\max } )$, then the equilibrium $w^{*} $ defined by \eqref{GrindEQ__5_1_} is globally asymptotically stable in $D$.}

\item  { If $v^{*} =0$, then for all $i=1,\ldots,n$ it holds that}
\begin{equation} \label{GrindEQ__5_4_} 
{\mathop{\lim }\limits_{t\to +\infty }} \left(v_{i} (t)\right)=0 
\end{equation} 
 
 {and there exist constants }$\bar{H}_{i} \in \left(\ell _{i} -r_{i} ,\max \left\{H_{i} (0),0\right\}\right]$ { such that}
\begin{equation} \label{GrindEQ__5_5_} 
{\mathop{\lim }\limits_{t\to +\infty }} \left(s_{i} (t)\right)=r_{i} +\bar{H}_{i}  .
\end{equation}

\item  \textit{If $v^{*} =v_{\max } $, then for all $i=1,\ldots,n$ it holds that}
\begin{equation}\label{GrindEQ__5_6_}
{\mathop{\lim }\limits_{t\to +\infty }} \left(v_{i} (t)\right)=v_{\max } 
\end{equation}
{and there exist constants }$\tilde{H}_{i} \in \left(\ell _{i} -r_{i} ,\max \left\{H_{i} (0),0\right\}\right]$ { such that}
\begin{equation} \label{GrindEQ__5_7_} 
{\mathop{\lim }\limits_{t\to +\infty }} \left(s_{i} (t)\right)=r_{i} +  {h_{i}} v_{\max } +\tilde{H}_{i} . 
\end{equation} 
\end{enumerate}
\end{theorem}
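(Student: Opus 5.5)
The key structural fact is the decoupled spacing-error equation $\dot H_i=-h_i\beta_i\chi(v_i)\sigma(H_i)$. It follows directly from \eqref{GrindEQ__2_1_}, \eqref{GrindEQ__3_1_}, since $\dot H_i=v_{i-1}-v_i-h_i u_i$ and the relative-speed term cancels exactly. Combined with the speed equation
\[
\dot v_i=\frac{1}{h_i}(v_{i-1}-v_i)+\beta_i\chi(v_i)\sigma(H_i),
\]
this puts the closed loop in cascade form. Throughout I use Theorem~\ref{thm:2}: solutions exist globally, stay in $D$, satisfy $v_i\in[0,v_{\max}]$, and obey the bound \eqref{GrindEQ__3_8_}. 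In particular $H_i(t)$ is monotone, since its derivative has sign opposite to $H_i$ whenever $\chi\ge0$. It is also bounded, with values in $(\ell_i-r_i,\max\{H_i(0),0\}]$.

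Hence each $H_i(t)$ converges to a limit $\bar H_i$ in the closed interval $[\min\{H_i(0),0\},\max\{H_i(0),0\}]$. That interval lies inside $(\ell_i-r_i,\max\{H_i(0),0\}]$, which already gives the ranges claimed in items 2 and 3. The remaining work is to show $v_i\to v^*$ and to pin down $\bar H_i$ in case 1.

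\textbf{Speeds converge (all cases).} I induct on $i$, using $v_0\equiv v^*$. Since $H_i$ is monotone and bounded,
\[
\int_0^\infty \chi(v_i)|\sigma(H_i)|\,dt=\frac{|H_i(0)-\bar H_i|}{h_i\beta_i}<\infty.
\]
So in the speed equation the forcing $g_i:=\beta_i\chi(v_i)\sigma(H_i)$ is in $L^1$, and $e_i:=v_i-v^*$ satisfies $\dot e_i=-e_i/h_i+e_{i-1}/h_i+g_i$. A stable first-order filter driven by an input that tends to zero plus an $L^1$ input has output tending to zero. Indeed, by variation of constants, $e_i(t)\to0$ provided $e_{i-1}\to0$, because $\int_0^t e^{-(t-\tau)/h_i}|g_i(\tau)|d\tau\to0$ for $g_i\in L^1$. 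This gives \eqref{GrindEQ__5_4_} and \eqref{GrindEQ__5_6_}. Then $s_i=r_i+h_iv_i+H_i$ yields \eqref{GrindEQ__5_5_} and \eqref{GrindEQ__5_7_} with the limits $\bar H_i$ and $\tilde H_i$ found above.

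\textbf{Case 1: identifying the limit of $H_i$.} When $v^*\in(0,v_{\max})$ I must show $\bar H_i=0$. Since $v_i\to v^*$ and $\chi(v^*)>0$, eventually $\chi(v_i)\ge\chi(v^*)/2$. If $\bar H_i\ne0$, then by \eqref{s2} and monotonicity $|\sigma(H_i)|$ is bounded below by a positive constant for large $t$, because $\sigma$ is continuous and nonzero on the segment between $H_i(t)$ and $\bar H_i$. That contradicts integrability, so $\bar H_i=0$ and the state converges to $w^*$.

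Lyapunov stability is the part I expect to be the main obstacle, because near $w^*$ one needs $\chi(v_i)$ uniformly positive. I would get this from the a priori bounds. Theorem~\ref{thm:2} gives $|H_i(t)|\le|H_i(0)|$. The variation-of-constants formula gives $|e_i(t)|\le \|e_{i-1}\|_\infty+|e_i(0)|+\int_0^\infty|g_i|$, with $\int_0^\infty|g_i|\le|H_i(0)|/h_i$ (this is also \eqref{GrindEQ__4_2_} of Theorem~\ref{thm:3}). Together these give $|e_i(t)|\le \sum_{j\le i}(|e_j(0)|+|H_j(0)|/h_j)$, so small initial deviations keep speeds in a neighbourhood of $v^*$ inside $(0,v_{\max})$, and spacings stay close as well. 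Combined with attractivity from every point of $D$, this gives global asymptotic stability in $D$. Alternatively, Proposition~\ref{prop:2} with $\dot v_0=0$ gives exponential convergence directly on $\interior(D)$. The remaining subtlety is initial conditions on the boundary $v_i(0)\in\{0,v_{\max}\}$. There the relative-speed term pushes $v_i$ into the interior immediately unless all predecessors share that boundary speed, and inductively from $v_0=v^*$ this cannot persist. So after an arbitrarily short time the state lies in $\interior(D)$ and the argument above applies.
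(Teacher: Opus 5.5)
Your proof is correct, but it takes a different route from the paper.

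The paper builds the weighted Lyapunov function $V=\sum_{i=1}^n\lambda^{i-1}\left(\frac12H_i^2+\frac{\phi}{2}p_i^2\right)$ with $p_i=v_i-v^*$. It tunes $\phi=1/(2\beta_{\max}\chi_{\max})$ and $\lambda\in(0,1)$ so that $\dot V\le-\sum_{i}\lambda^{i-1}\left(\frac{h_i\beta_i\chi(v_i)}{2}H_i\sigma(H_i)+\frac{\phi m_\lambda}{2}p_i^2\right)\le0$ on $D$. It then applies LaSalle's invariance principle on the compact closure of the trajectory. In case 1, stability follows because $V$ is a positive definite quadratic form in $(s-s^*,v-v^*1_n)$. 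In cases 2 and 3, LaSalle only yields $v_i\to v^*$, and convergence of $H_i$ then comes from monotonicity, exactly as in your argument.

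You avoid $V$ and LaSalle entirely:
\begin{itemize}
\item The identity $\int_0^\infty h_i\beta_i\chi(v_i)|\sigma(H_i)|\,dt=|H_i(0)-\bar H_i|$ turns the coupling term into an $L^1$ input.
\item The cascade of stable first-order filters driven from $v_0\equiv v^*$ then gives $v_i\to v^*$ in all three cases at once.
\item Non-integrability forces $\bar H_i=0$ when $\chi(v^*)>0$.
\item Variation of constants gives an explicit global linear-gain stability bound.
\end{itemize}
Your route is more elementary and quantitative. It is essentially the trajectory-based machinery of Theorem~\ref{thm:3} and needs no compactness or invariance argument. The paper's route instead yields a single Lyapunov certificate, which it reuses in Theorem~\ref{thm:5} to obtain uniform exponential stability once $\chi$ is bounded below on $W$.

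Two minor points. First, your stability bound holds on all of $D$ and uses only $\chi\ge0$, so your concern that stability needs $\chi(v_i)$ uniformly positive is misplaced. The appeal to Proposition~\ref{prop:2} and the sketched argument about boundary speeds $v_i(0)\in\{0,v_{\max}\}$ can be dropped, since your $L^1$ argument already covers attractivity from boundary points. Second, $\int_0^\infty|g_i|\,dt\le|H_i(0)|/h_i$ is not literally \eqref{GrindEQ__4_2_}, which carries the extra factor $1/\kappa(|H_i(0)|)\ge1$. Your derivation via monotonicity of $H_i$ is sharper and self-contained.
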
 

\begin{remark}
\textbf{(a)} Theorem \ref{thm:4} shows that the asymptotic behavior of the system depends qualitatively on whether the equilibrium speed lies in the interior of the admissible interval or at its boundary. In the case $v^{*} \in (0,v_{\max} )$, the equilibrium set reduces to a singleton and global asymptotic stability is achieved. In contrast, when $v^{*} =v_{\max} $, the equilibrium set $E_{v_{{\max}} } $ is nontrivial, and the trajectories converge to this set without necessarily converging to a unique spacing equilibrium. Completely analogous is the case where $v^{*} =0$. More specifically, when $v^*=0$, the limiting spacing positions satisfy $s_i=r_i+\bar{H}_i$, where $\bar{H}_i$ depends on the initial conditions and remains bounded within the interval $\left(\ell _{i} -r_{i} ,\max \left\{H_{i} (0),0\right\}\right)$. Thus, the platoon converges to a (collision-free) stationary configuration whose spacing remains close to the nominal standstill distance $r_i$. The proof of Theorem \ref{thm:4} can be found in Section \ref{sec:proofs}.

\textbf{(b)} As discussed in Remark \ref{rem:1}(b), if in \eqref{GrindEQ__3_3_} we assume that $\psi (v)\ge \underline{\psi }>0$ for all $v\ge 0$, the set $K$ given by \eqref{GrindEQ__2_15_} with $c=h$ is positively invariant for  \eqref{GrindEQ__2_1_}, \eqref{GrindEQ__3_1_}. Then, the equilibria of the closed-loop system reduce to the following two cases: (i) $v_i=v^* \in [0,v_{\max})$, $s_i=s^* =r+hv^*$; and  (ii) the  set of equilibria described by \eqref{GrindEQ__5_3_}. In this case, it can be proved, similarly to Theorem \ref{thm:4}, that when $v^*=0$, $\lim_{t\to+\infty}(v_i(t))=v^*$ and $\lim_{t\to+\infty}(s_i(t))=r$. This result can directly be extended to the heterogeneous case.
\end{remark}

The use of LaSalle's invariance principle in the proof of Theorem \ref{thm:4} provides convergence to the equilibrium set but does not yield information on the rate of convergence. In particular, exponential convergence cannot be inferred from this argument alone. The following result complements Theorem \ref{thm:4}(1) with exponential rate of convergence under the condition that the initial conditions belong to the interior of the set $D$, i.e., \textit{$\left(s(0),v(0)\right)\in \interior (D)$.}

\begin{proposition}\label{prop:3}
{ Assume that the leader speed is constant, such that $v_{0} (t)\equiv v^{*} \in (0,v_{\max } )$ and let $s_{i}^{*} = r_i+h_iv^*$, $i=1,\ldots,n$, $s^{*} =\left(s_{1}^{*} ,\ldots,s_{n}^{*} \right)$. Then for any $\left(\tilde{s}_{0} ,\tilde{v}_{0} \right)\in \interior\left(D\right)$, the solution of the initial value problem \eqref{GrindEQ__2_1_}, \eqref{GrindEQ__3_1_} with initial condition $\left(s(0),v(0)\right)=\left(\tilde{s}_{0} ,\tilde{v}_{0} \right)$ is defined for all $t\ge 0$ and there exist constants $C=C(s(0),v(0))>0$ and $\mu =\mu (s(0),v(0))>0$ such that the following holds}
\begin{equation} \label{GrindEQ__5_8_} 
\left|\left(s(t)-s^{*} ,v(t)-v^{*} 1_{n} \right)\right|\le C\exp \left(-\mu t\right)\left|\left(s(0)-s^{*} ,v(0)-v^{*} 1_{n} \right)\right|, t\ge 0.  
\end{equation}  
\end{proposition}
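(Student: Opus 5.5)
Since the leader speed is the constant $v^*\in(0,v_{\max})$, we have $\dot v_0\equiv 0$ and $v_0(t)\in[\underline v,\bar v]$ with $\underline v=\bar v=v^*$. Because $(s(0),v(0))\in\interior(D)$, the hypotheses of Lemma~\ref{lem:1} and Proposition~\ref{prop:2} are met. Lemma~\ref{lem:1} gives global existence of the solution and a margin $\delta>0$ with $\delta\le v_i(t)\le v_{\max}-\delta$ for all $t\ge0$. Proposition~\ref{prop:2} with $\|\dot v_0\|_{[0,t],\infty}=0$ then yields, for each $i$,
\[
|v_i(t)-v^*|+|H_i(t)|\le C_i e^{-\mu_i t}\sum_{j=1}^{i}\left(|v_j(0)-v^*|+|H_j(0)|\right),
\]
with $\mu_i>0$ as specified there. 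I take $\mu:=\min_i\mu_i>0$, which depends on the initial condition through $\delta$ and $\kappa(|H_i(0)|)$. Summing over $i$ gives a single bound
\[
\sum_{i=1}^n\left(|v_i(t)-v^*|+|H_i(t)|\right)\le \bar C e^{-\mu t}\sum_{j=1}^{n}\left(|v_j(0)-v^*|+|H_j(0)|\right),
\]
with $\bar C:=n\max_i C_i$.

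The next step converts from $(H,v)$ coordinates to $(s,v)$ coordinates. Since $s_i^*=r_i+h_iv^*$, we have
\[
s_i-s_i^*=H_i+h_i(v_i-v^*), \qquad H_i(0)=(s_i(0)-s_i^*)-h_i(v_i(0)-v^*).
\]
Hence the map $(s-s^*,v-v^*1_n)\mapsto(H,v-v^*1_n)$ is linear, invertible (block triangular with identity diagonal blocks), and bounded in both directions with constants depending only on $\max_i h_i$. Therefore
\[
|s_i(t)-s_i^*|\le(1+h_i)\left(|H_i(t)|+|v_i(t)-v^*|\right),
\]
and
\[
|v_j(0)-v^*|+|H_j(0)|\le(1+h_j)|v_j(0)-v^*|+|s_j(0)-s_j^*|.
\]
Combining these with the summed estimate and the equivalence of the $\ell^1$ and Euclidean norms on $\mathbb R^{2n}$ (factor $\sqrt{2n}$) produces \eqref{GrindEQ__5_8_} with $C:=\sqrt{2n}\,\bar C\,(1+\max_i h_i)^2$.

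The only delicate point is that $C$ and $\mu$ must be allowed to depend on the initial state, which is exactly what the statement permits. The rate $\mu$ degrades as the initial state approaches $\partial D$ ($\delta\to0$ makes $\min\chi\to0$) or as $|H_i(0)|$ grows ($\kappa$ is non-increasing). No Lyapunov construction is needed beyond what is already contained in Proposition~\ref{prop:2}. If one preferred not to rely on Proposition~\ref{prop:2} directly, the fallback is to redo its argument by hand:
\begin{itemize}
\item first, $\dot H_i=-h_i\beta_i\chi(v_i)\sigma(H_i)$ together with $\chi\ge\underline\chi$ and the sector condition \eqref{s4} on $|H|\le|H_i(0)|$ (justified by \eqref{H}) gives exponential decay of $H_i$;
\item second, $\dot v_i=\frac{1}{h_i}(v_{i-1}-v_i)+\beta_i\chi(v_i)\sigma(H_i)$ is a stable scalar filter driven by $v_{i-1}-v^*$ and an exponentially decaying term, so induction on $i$ yields exponential decay of $v_i-v^*$.
\end{itemize}
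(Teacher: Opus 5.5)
Your proposal is correct and follows the paper's proof essentially step by step. You apply Proposition~\ref{prop:2} with $\underline{v}=\bar{v}=v^*$ and $\dot v_0\equiv 0$, sum over $i$ with constant $n\max_i C_i$, convert coordinates via $s_i-s_i^*=H_i+h_i(v_i-v^*)$ with factors $(1+h_{\max})$ in both directions, and finish with equivalence of norms. The only differences are cosmetic: you take global existence from Lemma~\ref{lem:1} rather than Theorem~\ref{thm:2}, and you write out the single rate $\mu=\min_i\mu_i$ and the $\sqrt{2n}$ norm-equivalence constant, which the paper leaves implicit.
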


The proof of Proposition \ref{prop:3} follows from Proposition \ref{prop:2} and Lemma \ref{lem:1} in the case of constant leader speed and shows that the tracking estimate reduces to exponential convergence toward the equilibrium $w^*$  defined by \eqref{GrindEQ__5_1_}.   Proposition \ref{prop:3} shows exponential convergence along trajectories corresponding to interior equilibrium speeds with the rate of convergence depending on the initial states. We can strengthen this result by considering the family of sets
\begin{equation} \label{GrindEQ__5_9_} 
W_{\bar{\delta},\bar{G}}:=\left\{\left(s,v\right)\in {\mathbb R}^{2n} :\bar{\delta }\le v_{i} \le v_{\max } -\bar{\delta },0< G_{i} \le \bar{G}, \,\,i=1,\ldots,n\right\} \subset D,
\end{equation} 
where $0<\bar{\delta }<\min \left\{v^{*} ,v_{\max } -v^{*} \right\}$,  $v^*\in(0,v_{\max})$, $\bar{G}\ge r_{i} +h_{i} v^{*} $, and $r_i>\ell_i>0$, $i=1,...,n$. Then, we can prove the following theorem which shows uniform exponential stability of the equilibrium for initial conditions in any  set $W\subset D$.
 
\begin{theorem}\label{thm:5}
Let $v^{*} \in (0,v_{\max } )$ and consider the set $W$ defined by \eqref{GrindEQ__5_9_} with arbitrary constants $0<\bar{\delta }<\min \left\{v^{*} ,v_{\max } -v^{*} \right\}$, $\bar{G}\ge r_{i} +h_{i} v^{*} $. Then for any $\left(s(0),v(0)\right)\in W$ there exist constants $\tilde{C},\tilde{\mu }>0$ (depending only on   $\bar{\delta },\bar{G}$) such that 
 \begin{equation} \label{GrindEQ__5_10_} 
 \left|\left(s(t)-s^{*} ,v(t)-v^{*} 1_{n} \right)\right|\le \tilde{C}\exp \left(-\tilde{\mu }t\right)\left|\left(s(0)-s^{*} ,v(0)-v^{*} 1_{n} \right)\right|, t\ge 0 
 \end{equation} 
where $s^{*} =\left(s_{1}^{*} ,\ldots,s_{n}^{*} \right)$.
\end{theorem}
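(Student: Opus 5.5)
The plan is to redo the argument behind Lemma \ref{lem:1} and Proposition \ref{prop:2} with every constant made \emph{uniform} over $W_{\bar\delta,\bar G}$. After that, the result follows from a cascade estimate in the error coordinates $e_i:=v_i-v^*$ and $H_i$. Since $v_0\equiv v^*$, the closed loop \eqref{GrindEQ__2_1_}, \eqref{GrindEQ__3_1_} reads
\[
\dot H_i=-h_i\beta_i\chi(v_i)\sigma(H_i),\qquad \dot e_i=\frac{1}{h_i}(e_{i-1}-e_i)+\beta_i\chi(v_i)\sigma(H_i),\qquad e_0\equiv 0 .
\]
The change of coordinates $(s-s^*,v-v^*1_n)\leftrightarrow(H,e)$ is linear and invertible, because $s_i-s_i^*=H_i+h_ie_i$. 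Hence it suffices to prove the exponential estimate for $(H,e)$.

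\textbf{Step 1 (uniform bound on $H$).} On $W_{\bar\delta,\bar G}$ we have $H_i=G_i-(r_i-\ell_i)$, so $|H_i(0)|\le R:=\bar G+\max_i r_i$. By \eqref{H} (or \eqref{GrindEQ__3_8_}) this gives $|H_i(t)|\le R$ for all $t\ge0$. Combined with \eqref{s3}, we also get $|\sigma(H_i(t))|\le R$.

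\textbf{Step 2 (uniform speed band; the main obstacle).} Lemma \ref{lem:1} provides a $\delta$ that depends on the particular initial state, so it must be replaced by a band depending only on $\bar\delta,\bar G$. I would argue by induction on $i$, starting from $v_0=v^*\in[\bar\delta,v_{\max}-\bar\delta]$.

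For the lower side, I use $\psi\in C^1$ with $\psi(0)=0$. This gives a constant $L$ with $\chi(v)\le v_{\max}Lv$ on $[0,v_{\max}]$. Assuming inductively that $v_{i-1}(t)\ge\delta_{i-1}$, we obtain
\[
\dot v_i\ge \frac{1}{h_i}(\delta_{i-1}-v_i)-\beta_iv_{\max}LR\,v_i .
\]
A comparison argument then shows $v_i(t)\ge\delta_i:=\min\{\bar\delta,\ \delta_{i-1}/(1+h_i\beta_iv_{\max}LR)\}$.

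For the upper side, I use $\chi(v)\le(\max_{[0,v_{\max}]}\psi)(v_{\max}-v)$ and run the symmetric comparison for $v_{\max}-v_i$.

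This yields $\underline\delta>0$, depending only on $\bar\delta,\bar G$ and the data, with $v_i(t)\in[\underline\delta,v_{\max}-\underline\delta]$ for all $t\ge0$. Consequently
\[
\underline\chi:=\min_{[\underline\delta,v_{\max}-\underline\delta]}\chi>0\quad\text{and}\quad \chi(v_i(t))\ge\underline\chi \text{ for all } t\ge0 .
\]
If the inductive comparison turns out delicate, I would fall back on the integral bound $\int_0^\infty\beta_i\chi(v_i)|\sigma(H_i)|\,dt\le R/h_i$, which follows directly from the $H_i$-equation.

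\textbf{Step 3 (exponential decay of $H$).} Using \eqref{s4} with $R$, we have $H\sigma(H)\ge m_RH^2$ for $|H|\le R$. Therefore
\[
\frac{d}{dt}H_i^2\le-2h_i\beta_i\underline\chi m_RH_i^2 ,
\]
which gives $|H_i(t)|\le e^{-\lambda_it}|H_i(0)|$ with $\lambda_i:=h_i\beta_i\underline\chi m_R$.

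\textbf{Step 4 (cascade in $e$ and conclusion).} Each $e_i$ satisfies a stable scalar linear equation with rate $1/h_i$. Its inputs are $e_{i-1}/h_i$ and a term bounded by $\beta_i\bar\chi|H_i(0)|e^{-\lambda_it}$, where $\bar\chi:=\max\chi$. I fix any $\tilde\mu<\min_i\min\{1/h_i,\lambda_i\}$ and apply variation of constants inductively in $i$. This gives
\[
|e_i(t)|+|H_i(t)|\le C_ie^{-\tilde\mu t}\sum_{j\le i}\bigl(|e_j(0)|+|H_j(0)|\bigr),
\]
with $C_i$ depending only on $\bar\delta,\bar G$ and the data. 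Mapping back through the linear transformation yields \eqref{GrindEQ__5_10_} with uniform constants $\tilde C,\tilde\mu$.
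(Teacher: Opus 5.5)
Your proof is correct, but after the shared first half it takes a different route from the paper.

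Both proofs rest on a speed band that is uniform over $W_{\bar\delta,\bar G}$. The paper states it as Lemma~\ref{lem:2} and omits the proof. Your Step~2 actually carries it out: it reruns the induction of Lemma~\ref{lem:1} with the state-dependent quantities ($v_i(0)$, and $\max\{H_i(0),0\}$ in $B_i$) replaced by $\bar\delta$ and by the uniform bound $R$ coming from $G_i(0)\le\bar G$.

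The routes diverge once $\chi(v_i(t))\ge\underline\chi$ is available. The paper reuses the weighted function $V=\sum_i\lambda^{i-1}\bigl(\tfrac12H_i^2+\tfrac\phi2p_i^2\bigr)$ from Theorem~\ref{thm:4}. It invokes \eqref{s4} on $|H_i|\le R_{\bar G,i}$ to get $\dot V\le-\mu V$, then returns to $(s-s^*,v-v^*1_n)$ through an invertible linear change of coordinates. You instead use that each $H_i$-equation is decoupled. You obtain exponential decay of $H_i$ directly and push it down the string by variation of constants. This is the proof of Proposition~\ref{prop:2} with $\dot v_0\equiv0$, with Lemma~\ref{lem:1}'s $\delta$ replaced by your uniform band and $\kappa(R_i)$ replaced by $m_R$.

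What each route buys:
\begin{itemize}
\item Your route avoids building a Lyapunov function. It gives a triangular estimate, in which vehicle $i$ is controlled only by the initial errors of vehicles $1,\dots,i$, with explicit constants.
\item The paper's route gives a single quadratic function decaying at rate $\mu$ along all trajectories from $W_{\bar\delta,\bar G}$. The cost is the factor $\lambda^{-(n-1)}$ in $\tilde C$.
\item Your $C_i$ also grow geometrically through $C_{i-1}/(1-\tilde\mu h_i)$, so neither constant is uniform in $n$.
\end{itemize}

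One caveat concerns your fallback bound $\int_0^\infty\beta_i\chi(v_i)|\sigma(H_i)|\,dt\le R/h_i$. It is true, but it could not replace Step~2. An additive perturbation of size up to $R/h_i$ does not keep $v_i$ away from $0$ or $v_{\max}$. What makes the band argument work are the multiplicative bounds $\chi(v)\le v_{\max}Lv$ and $\chi(v)\le\psi_{\max}(v_{\max}-v)$, which your main argument does use.
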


Theorem \ref{thm:5} shows that, whenever the initial speeds are bounded away from $0$ and $v_{\max}$ and the initial spacings $G_i(0)$ are uniformly bounded above, the equilibrium  $w^*$ defined by \eqref{GrindEQ__5_1_}, is uniform exponentially stable. The constants in the estimate depend only on the prescribed bounds $\bar{\delta}$ and $\bar{G}$, which may be chosen arbitrarily.

\section{Simulations}\label{sec:sims}

In this section we provide simulation results for the nonlinear controller  \eqref{GrindEQ__3_1_}. More specifically, we first illustrate string stability results and compare the results with the linear controller  \eqref{CTH:linear}. We then use trajectories and data obtained from the NGSIM (see \cite{Montanino2}) and OpenACC datasets (see \cite{OpenACC}) for: (a) human driven leader, (b) human-driven platoons with automated leader, as well as, (c) platoons of automated vehicles, to compare the respective performances in more realistic scenarios. Finally, we illustrate the robustness of the nonlinear controller when assuming third-order vehicle dynamics and provide throughput comparisons with the controller proposed in \cite{Karafyllis2023}.

\subsection{Numerical Illustration of String Stability}

This section compares the performance of the nonlinear controller \eqref{GrindEQ__3_1_} with the linear CTH design \eqref{CTH:linear}. We consider a heterogeneous platoon of five vehicles. For the nonlinear controller \eqref{GrindEQ__3_1_} we choose 
\begin{equation}\label{psi:example}
\psi(v)=\frac{4}{v_{\max}^2}v
\end{equation}
which normalizes the function $\chi$ in \eqref{GrindEQ__3_3_}, i.e., $\chi(v)\in[0,1]$, $v\in[0,v_{\max}]$. Moreover we select the spacing saturation function \eqref{sigma:nonlinear} and set $\beta_i=\frac{1}{h_i}\left(k_i-\frac{1}{h_i}\right)$ (with $k_i>1/h_i$) which allows as to fairly compare the nonlinear controller with the linear CTH controller \eqref{CTH:linear}. We choose control gain $k_i=\frac{1.2}{h_i}$, $v_{\rm max}=20$, and $\bar{H}= \frac{v_{max}}{2}$. For simplicity we consider the case of homogeneous time-headway $h_i=0.8 \nobreakspace (s)$, standstill spacing $r_i=10 \nobreakspace (m)$, and length $\ell_i=4.5 \nobreakspace (m)$, $i=1,\ldots,4$. The initial conditions are chosen as to belong in the set $K$ given by \eqref{GrindEQ__2_15_} with $c=h$, which is invariant for the linear CTH controller, which in this case also satisfies $K\subset D$ where $D$ is given by \eqref{GrindEQ__3_6_}. More specifically, the initial conditions are given by $v_{i}(0) = 19.5 \left(\frac{m}{s} \right)$, $i=1,\ldots,4$, $v_{0}(0) \equiv 19 \left(\frac{m}{s} \right) $; $s_{1}(0) = 32.5 \nobreakspace (m)$, $s_{2}(0) = 27.1 \nobreakspace (m)$, $s_{3 }(0) = 26.1 \nobreakspace (m)$, and $s_{4 }(0) = 27.6 \nobreakspace (m)$. To illustrate the string stability of the linear and nonlinear controllers, we consider a leading vehicle that performs a braking and accelerating maneuver. The response of the vehicles using the linear controller \eqref{CTH:linear} and the nonlinear controller \eqref{GrindEQ__3_1_} with \eqref{sigma:nonlinear} is shown in Figure~\ref{Fig1}. 

This simulation scenario illustrates the string stability of both the linear and nonlinear CTH-based controllers. There are, however, certain important features that need to be highlighted. First, because of the large initial spacing errors, the linear controller generates relatively large acceleration input, causing some vehicles to temporarily exceed the  speed limit $v_{\max}$ as they attempt to close the gap with their predecessors. In contrast, the nonlinear controller employs the spacing saturation function $\sigma(H)$ defined in \eqref{sigma:nonlinear}, which limits the influence of large spacing errors on the control input. Consequently, it produces smoother acceleration profiles while maintaining the vehicles' speeds within the prescribed bounds.

\begin{figure}[pos=htbp]
\centering
 
\makebox[0.45\linewidth]{\textbf{Linear CTH \eqref{CTH:linear}}}\hfill
\makebox[0.45\linewidth]{\textbf{Nonlinear CTH \eqref{GrindEQ__3_1_}, \eqref{sigma:nonlinear}}}
 
\vspace{0.3em}

\includegraphics[width=0.45\linewidth]{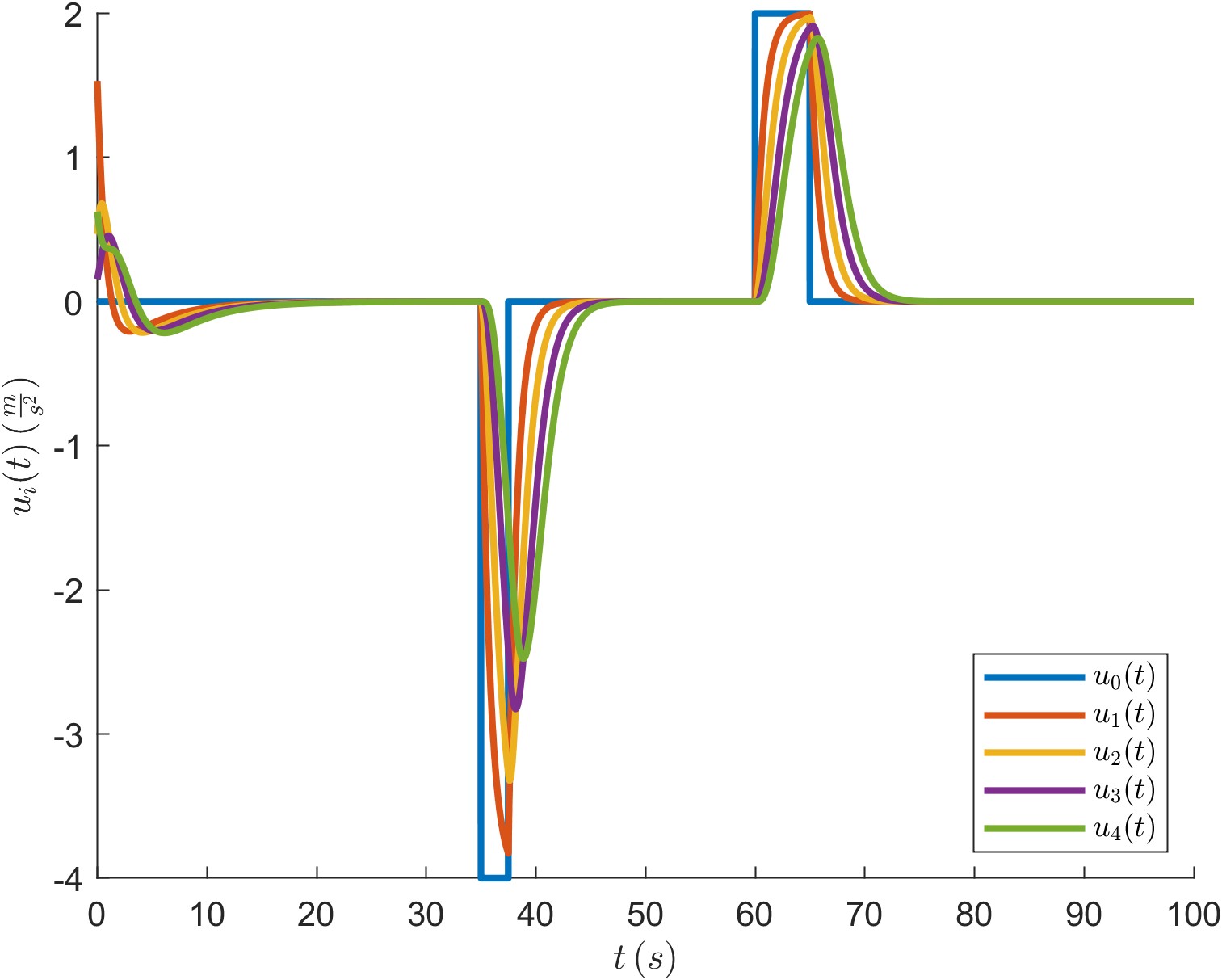}\hfil
\includegraphics[width=0.45\linewidth]{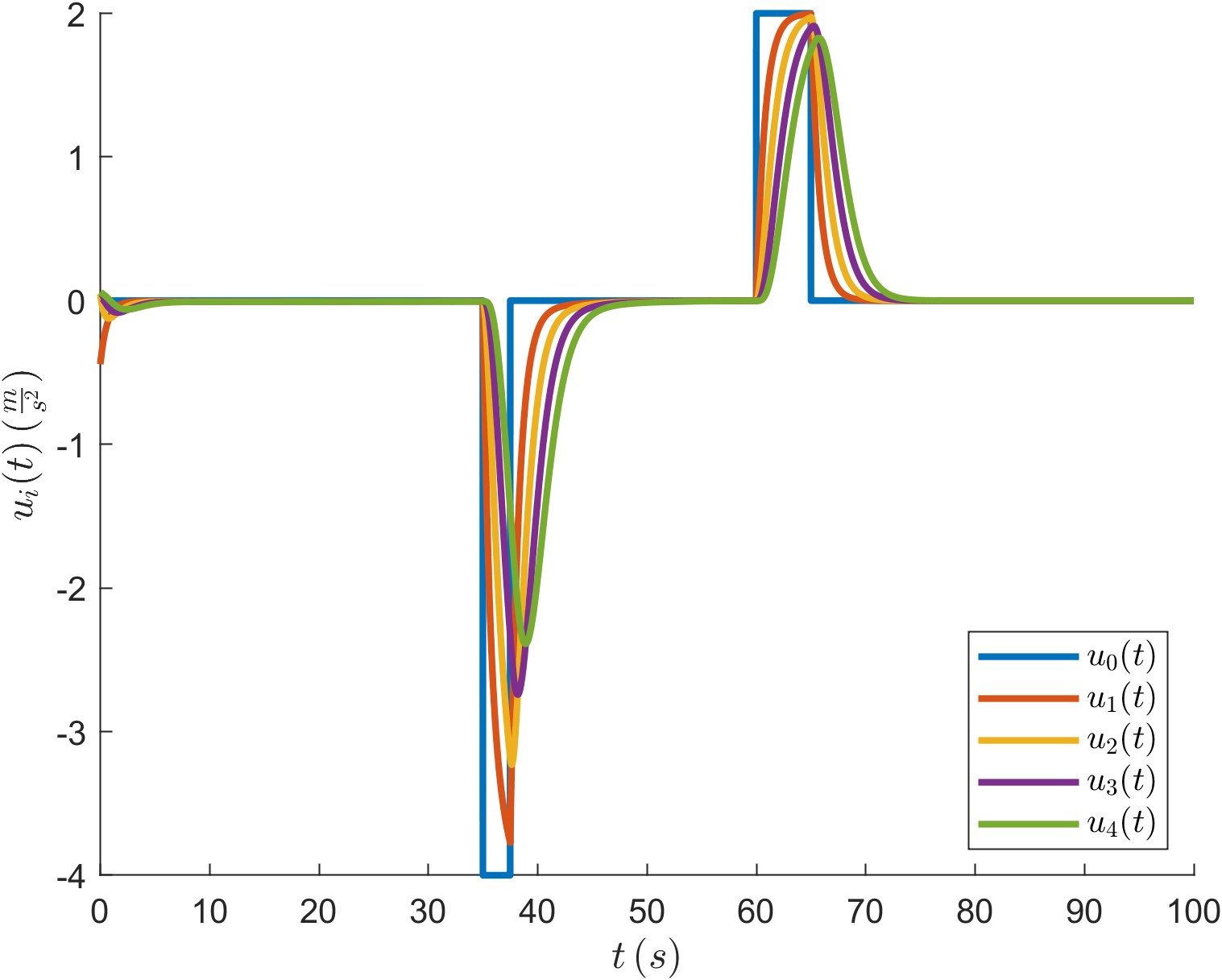}

\includegraphics[width=0.45\linewidth]{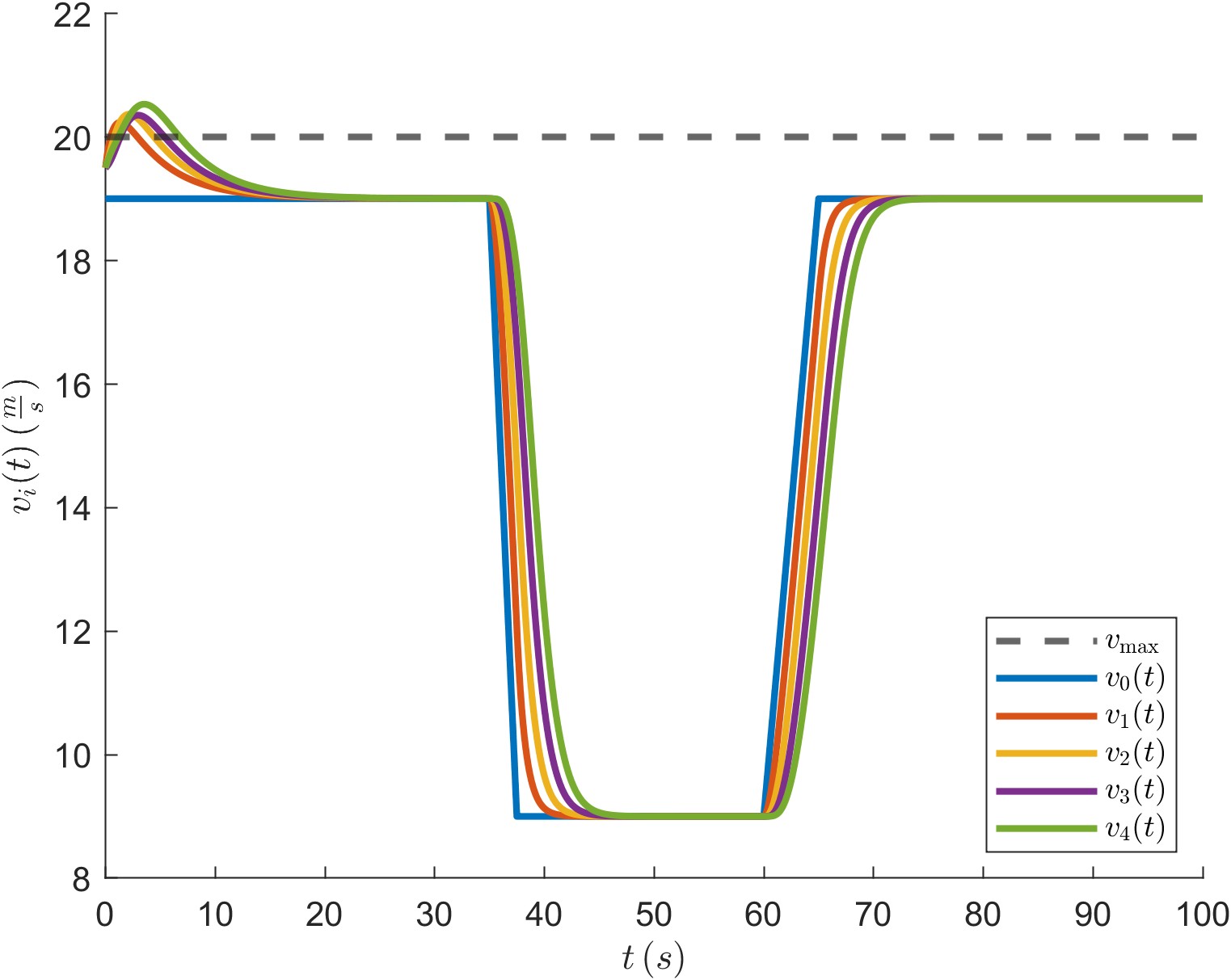}\hfil
\includegraphics[width=0.45\linewidth]{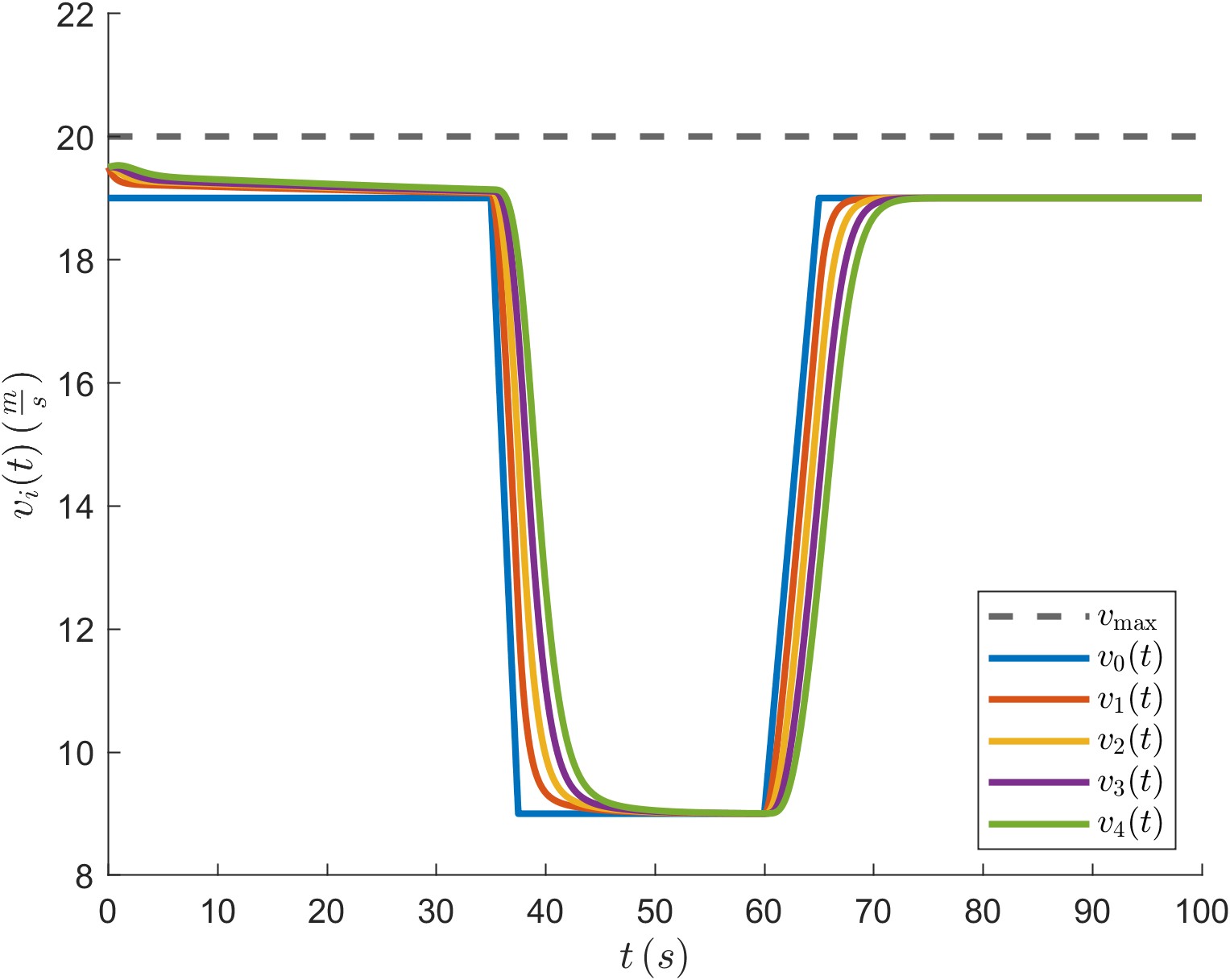}

\includegraphics[width=0.45\linewidth]{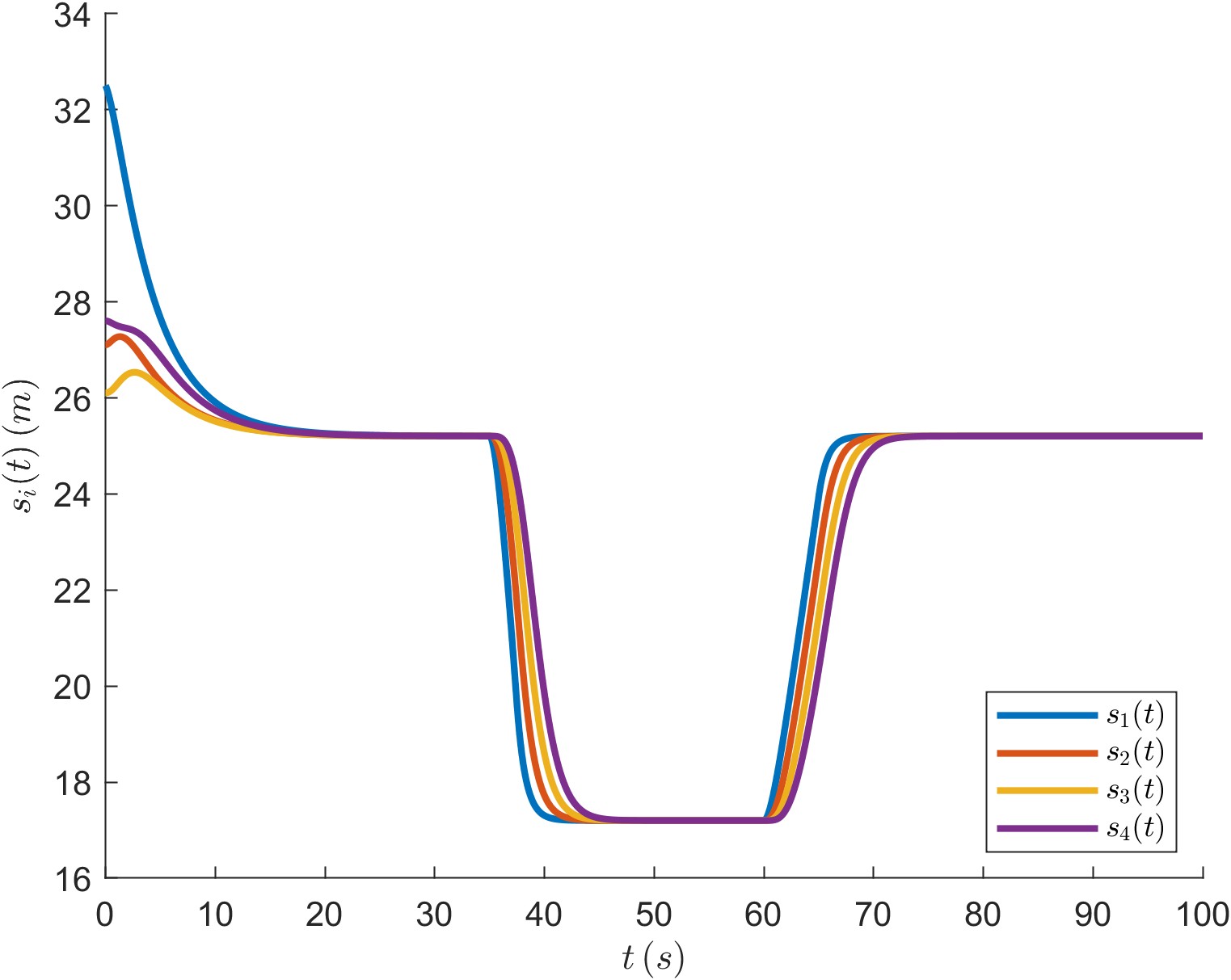}\hfil
\includegraphics[width=0.45\linewidth]{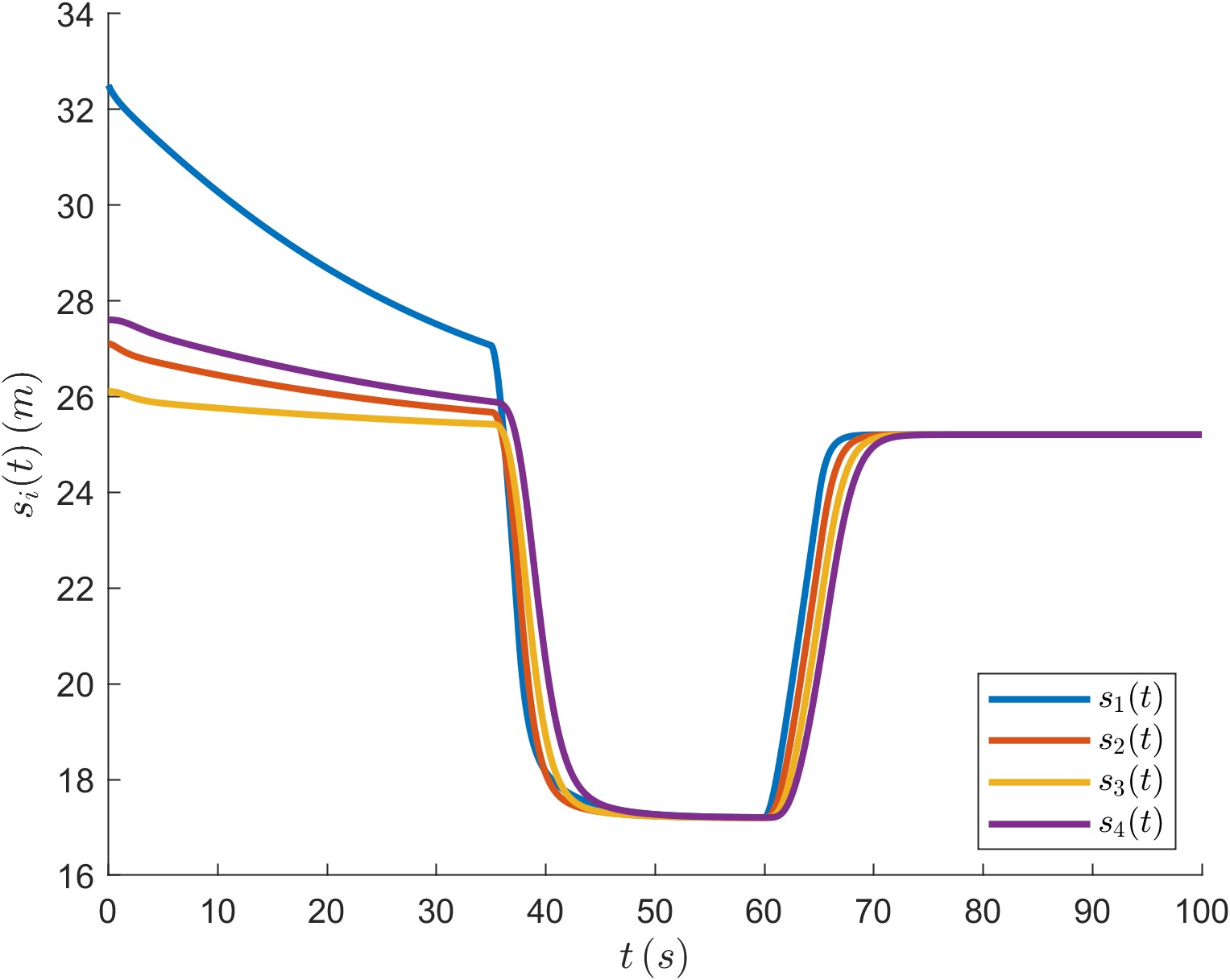}

\caption{Comparison between the linear CTH controller \eqref{CTH:linear} (left) and the nonlinear CTH controller \eqref{GrindEQ__3_1_} with \eqref{sigma:nonlinear} (right).}
\label{Fig1}
\end{figure}

\subsection{Performance Comparison with Actual Traffic Platoons Using  the NGSIM and OpenACC Datasets }
In this subsection, we compare the behavior of real platoons of human-driven vehicles and automated vehicles, extracted from the NGSIM and OpenACC datasets, with that of a platoon sharing the same leader maneuver but with automated followers controlled by the linear and nonlinear CTH-based strategies, respectively. To compare also quantitatively the linear and nonlinear strategies in terms of performance, we will use the following four practical indices (see, e.g., \cite{Akcelik}, \cite{Martinez}, \cite{MWang}):\\

\noindent  a) Safety:
 \begin{align}
 	J_{\rm safety} &= \sum_{i=1}^{n} \int_{0}^{T} \bar{J}_i \left(s_i(t), v_i(t),v_{i-1}(t)\right)\,dt, \label{s_1} \\
 	\bar{J}_i &= \begin{cases}
 		\frac{2}{s_i(t)} \left( v_{i-1}(t) - v_i(t) \right) ^2 , 
 		& {\rm {if}} \quad v_{i-1}(t) \leq v_i(t) \\
 		0, & \rm {otherwise}
 	\end{cases}. \label{s_2}
 \end{align}
 b) Comfort:
 \begin{align}
 	J_{\rm comfort}&=  \sum_{i=1}^{n} \int_{0}^{T} \dot{a}_i(t)^2\,dt. \label{C_3}           
 \end{align}
 where $a_i$ is the acceleration of vehicle $i$.

\noindent c) Fuel consumption:
 \begin{align}
 	J_{\rm fuel} &= \sum_{i=1}^{n} \int_{0}^{T} J_i \left(v_i(t),a_i(t)\right)\,dt, \label{f_1} \\
 	J_i &= \begin{cases}
 		\eta_1 + \eta_2 R_{T_i} \left(v_i(t),a_i(t)\right) v_i(t) 
 		\\+ \eta_3 v_i(t) a_i^2(t),
 		& {\rm {if}} \quad R_{T_i} > 0 \\
 		\eta_1, & {\rm {if}} \quad R_{T_i} \leq 0,
 	\end{cases} \label{f_2} \\
 	R_{T_i} &= \eta_4 + \eta_5 v_i^2(t) + \eta_6 a_i(t), \label{f_3}
 \end{align}
 where $a_i$ is the acceleration of vehicle $i$.
 
\noindent d) Tracking:
 \begin{align}
 	J_{\text{tracking}} = \sum_{i=1}^{n} \int_{0}^{T} \left( v_i(t) - v_{i-1}(t) \right)^2 \, dt .\label{f_4}
 \end{align}
In the following simulation results, the parameters of the fuel consumption indices \eqref{f_1}--\eqref{f_3} are adopted from \cite{Bek} and are given by
$\eta_1=0.666$, $\eta_2=0.0717$, $\eta_3=0.0578$, $\eta_4=0.527$, $\eta_5=0.000948$, and $\eta_6=1.68$.

\subsubsection{Human-Driven Leader Trajectory From the NGSIM Dataset}

First we compare the performance of the linear controller \eqref{CTH:linear} and the nonlinear controller \eqref{GrindEQ__3_1_} with \eqref{sigma:nonlinear} and \eqref{psi:example}, using data from the NGSIM dataset. Specifically, the trajectory of the leading vehicle is extracted from the reconstructed NGSIM dataset in \cite{Montanino2}   and corresponds to a human-driven vehicle (vehicle no. 1601 in the I-80 dataset), while the following vehicles are controlled by the linear or nonlinear ACC strategies.  We choose control gain $k_i={1.2}/{h_i}$, $\psi(v_i(t))=4\frac{v_i(t)}{v_{\rm max}^2}$ for all~$i$, $v_{\max}=20$, $\tilde{H}=  {v_{max}}/{2}$, $h_i =0.8$,  $r_i=10$, and initial conditions $v_{i}(0) \equiv 15.5 \left( m/s \right)$,   $s_{1}(0) = 26.5 \nobreakspace (m)$, $s_{2}(0) = 23.9 \nobreakspace (m)$, $s_{3}(0) = 22.9 \nobreakspace (m)$, $s_{4}(0) = 24.4 \nobreakspace (m)$.

\begin{table}[pos=h]
	\centering
	\caption{Cost values of safety, comfort, fuel consumption, and tracking for both the linear \eqref{CTH:linear} and nonlinear \eqref{GrindEQ__3_1_}, \eqref{sigma:nonlinear} strategies.}
	\scalebox{1}{\begin{tabular}{||c| c  c||} 
			\hline
			\backslashbox{Cost function}{Strategy} & Linear \eqref{CTH:linear} & Nonlinear \eqref{GrindEQ__3_1_}, \eqref{sigma:nonlinear}  \\ 
			\hline\hline
			Safety & $4.58$ &  $4.52$ \\ 
			\hline
			Comfort & $184.99$  &  $184.12$ \\ 
			\hline
			Fuel Consumption & $181.90$  &  $179.51$ \\ 
			\hline
			Tracking & $53.50$  &  $52.70$ \\ 
			\hline
	\end{tabular}}\vspace*{-\baselineskip}
	\label{table_a}
\end{table}

\begin{figure}[pos=ht]
	\begin{center}
	\makebox[0.45\linewidth]{\textbf{Linear Controller \eqref{CTH:linear}}}\hfill
	\makebox[0.45\linewidth]{\textbf{Nonlinear Controller \eqref{GrindEQ__3_1_}, \eqref{sigma:nonlinear}}}
	
	\vspace{0.4em}
	
	\includegraphics[width=0.45\linewidth]{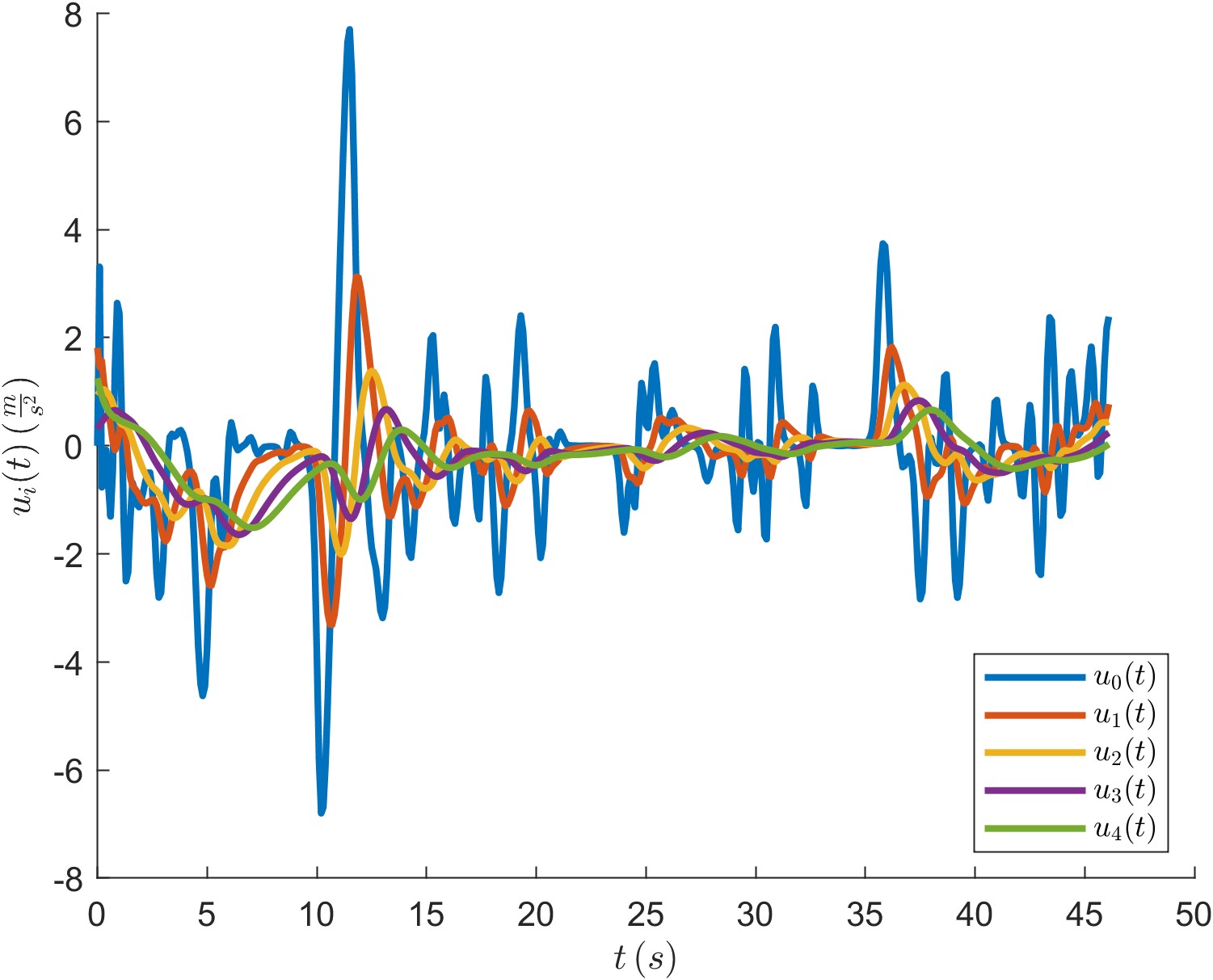}\hfil
	\includegraphics[width=0.45\linewidth]{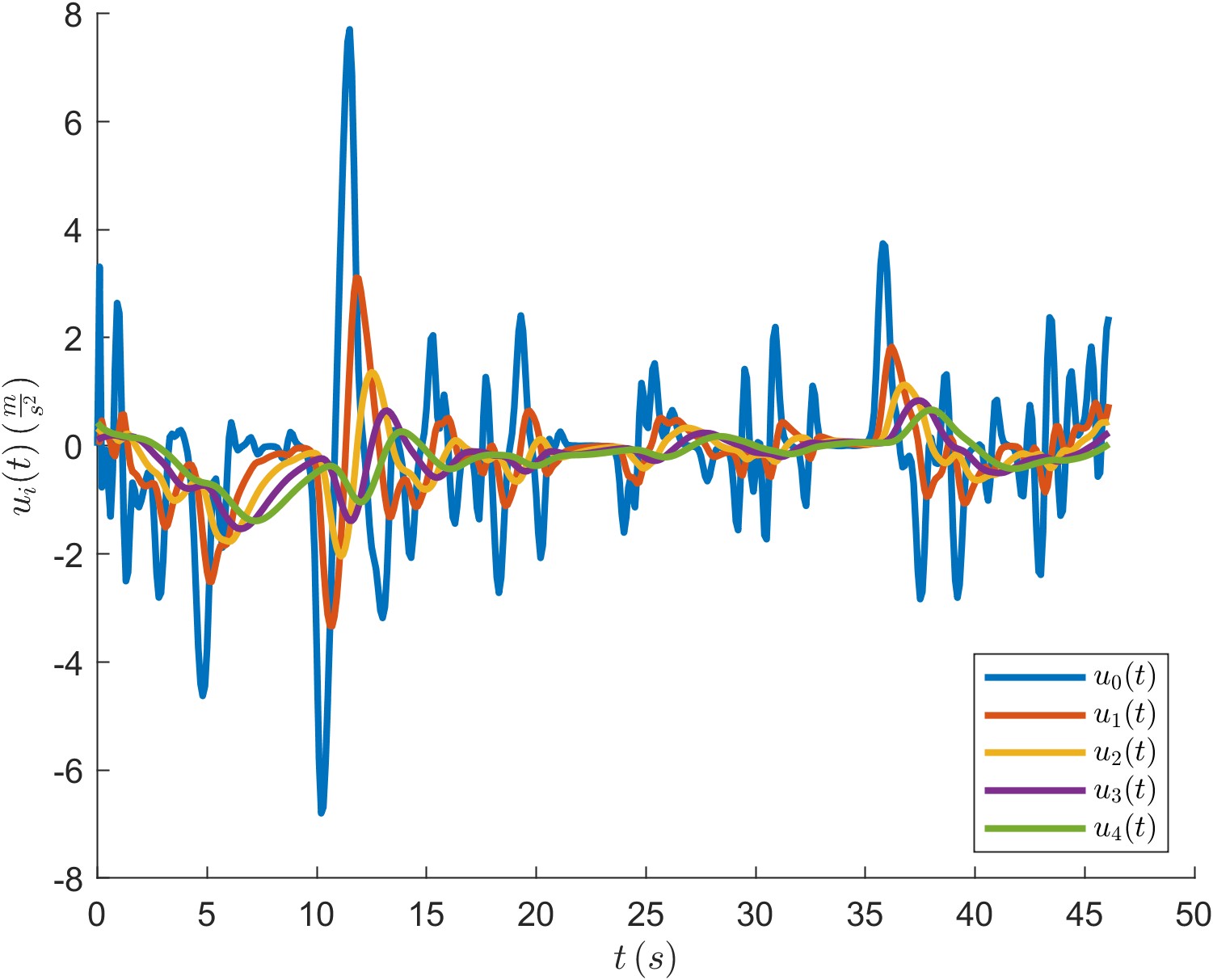} 
	
	\includegraphics[width=0.45\linewidth]{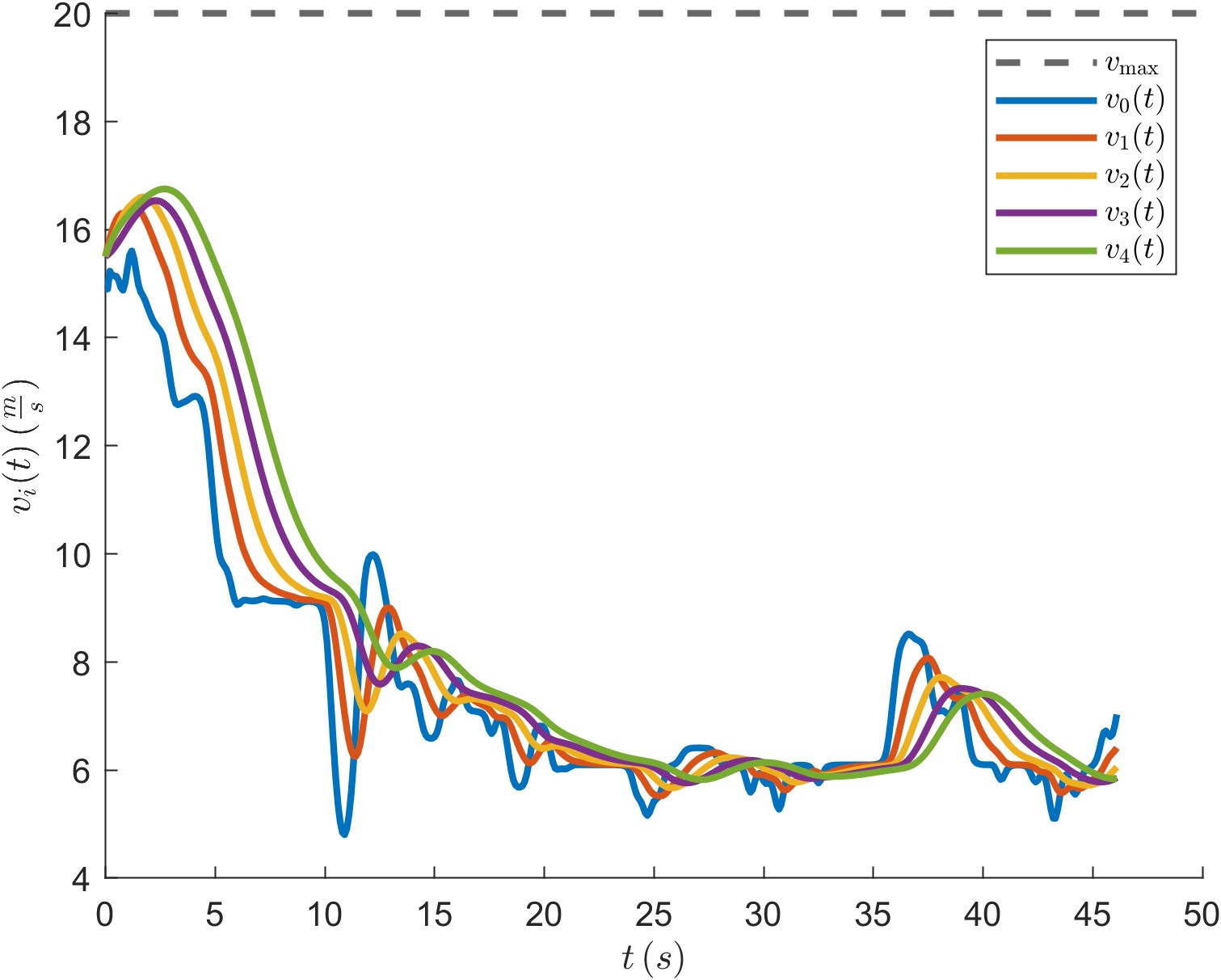}\hfil
	\includegraphics[width=0.45\linewidth]{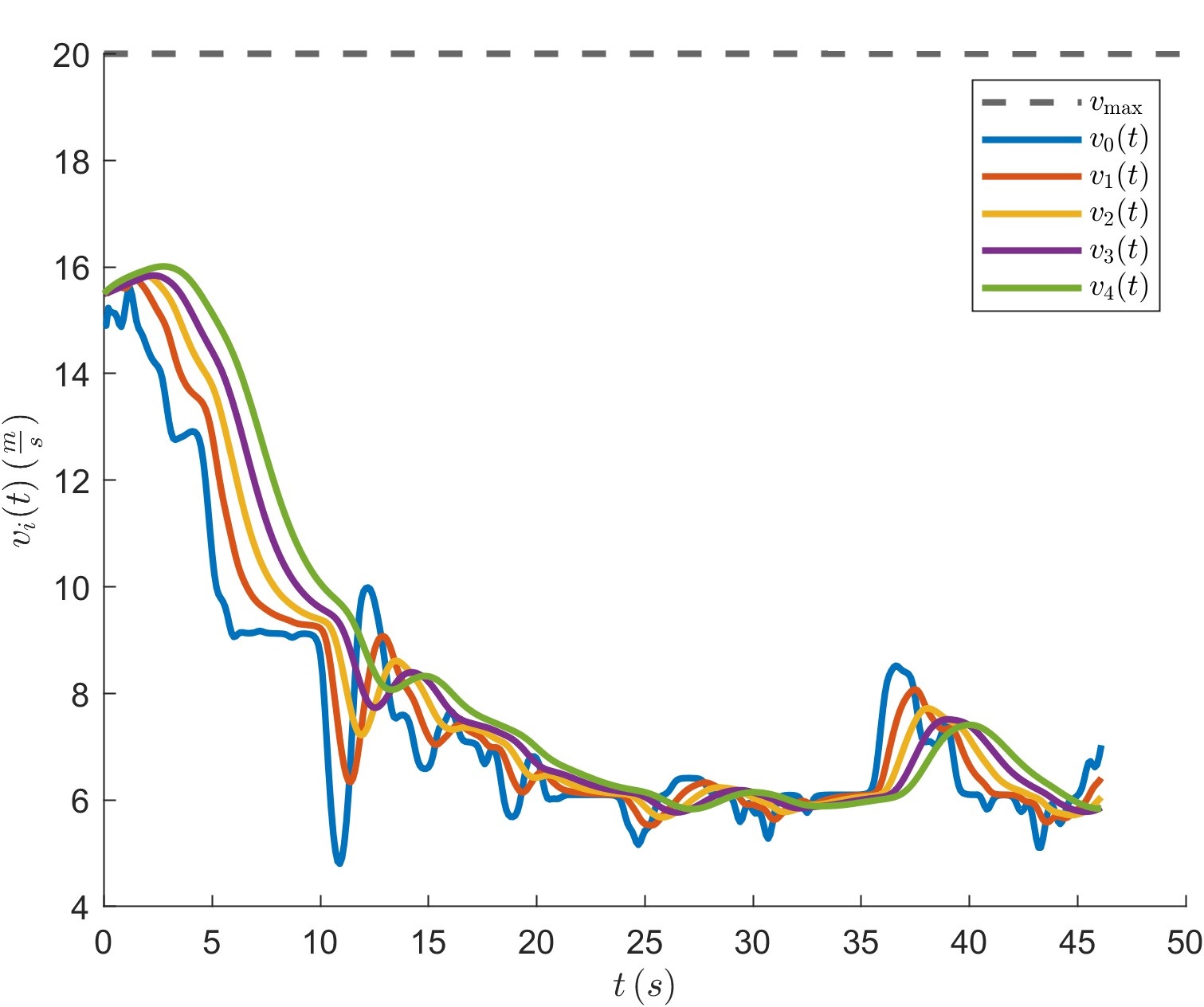} 
	
	\includegraphics[width=0.45\linewidth]{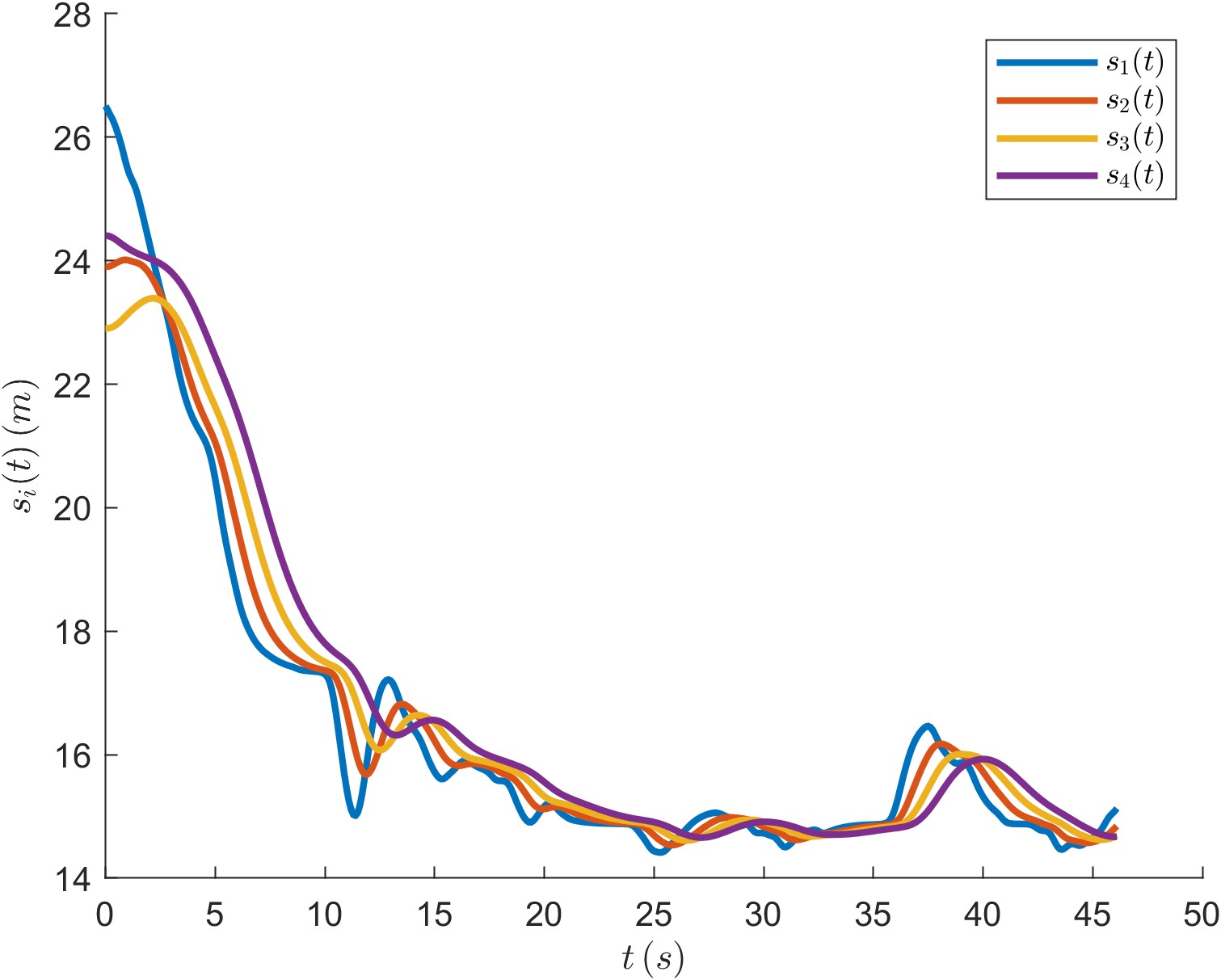}\hfil
	\includegraphics[width=0.45\linewidth]{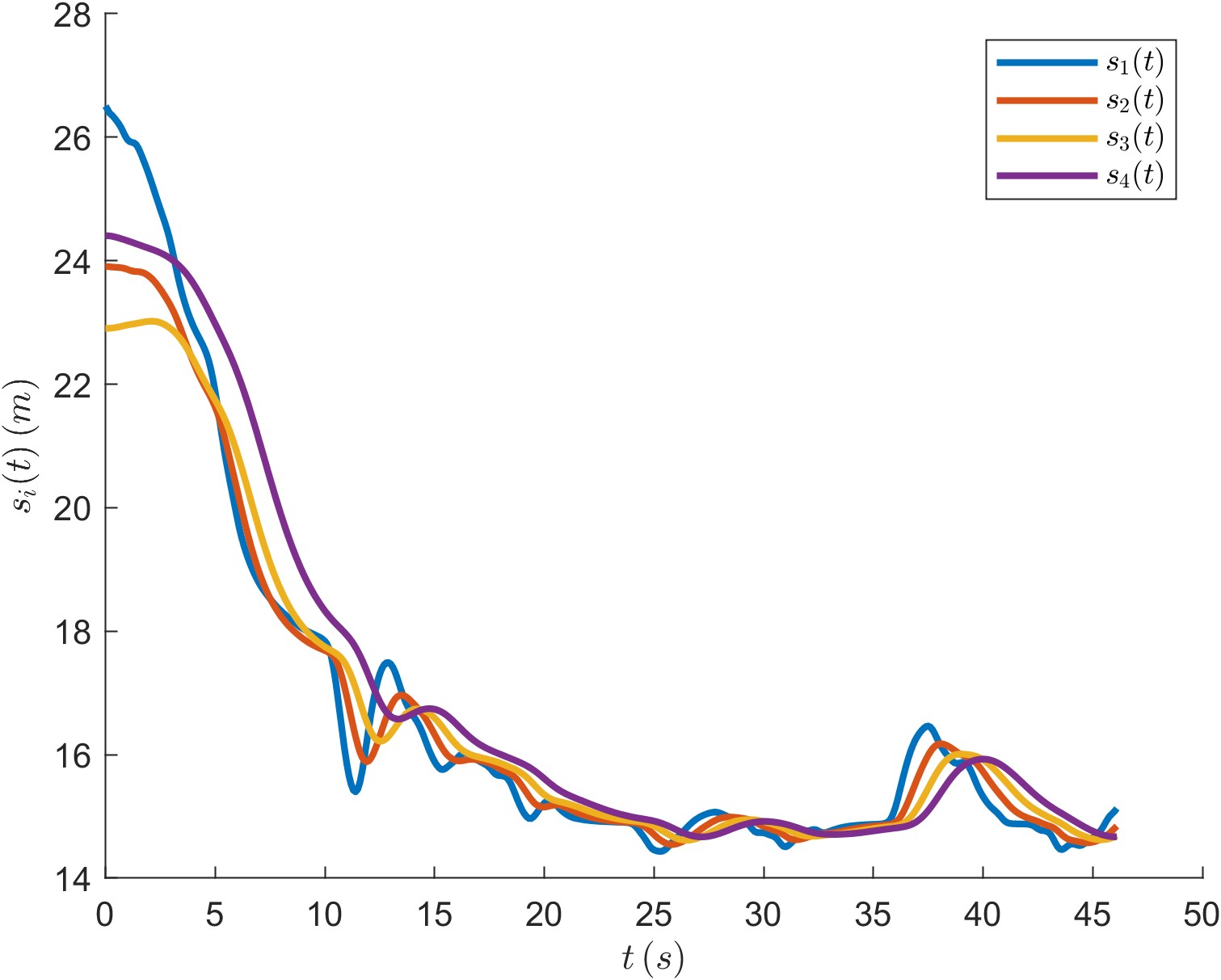} 
	\caption{Comparison between the linear CTH controller \eqref{CTH:linear} (left) and the nonlinear CTH controller \eqref{GrindEQ__3_1_} with \eqref{sigma:nonlinear} (right).  The rows show acceleration (top), speed (middle), and spacing (bottom) when  vehicles share the same time-headway.}
	 \label{Fig111}
		\end{center}
\end{figure}

 \FloatBarrier

Figure \ref{Fig111} shows the acceleration, speed, and spacing trajectories of the automated followers when tracking the human-driven leading vehicle (blue).  The leader exhibits several abrupt acceleration and deceleration maneuvers (due to the emergence of stop-and-go waves, as the NGSIM data are obtained from heavily congested traffic) and both the linear and nonlinear CTH controllers attenuate these disturbances, resulting in progressively smoother speed and spacing profiles for the following vehicles while maintaining safe inter-vehicle spacings throughout the simulation. The transient overshoots visible in the follower trajectories are consistent with the initial-condition perturbations considered in the string-stability analysis, rather than indicating amplification of the leader disturbance. Overall, both controllers produce very similar closed-loop responses.
 
Furthermore, Table~\ref{table_a} quantifies performance of the two different controllers. We observe that both the linear and nonlinear controllers achieve similar performance across all considered metrics, with the nonlinear controller providing a marginal improvement. This is expected since, in the considered scenario, the initial spacing deviations from the desired equilibrium are such that they do not lead to aggressive accelerations in order for the spacings to be regulated rapidly at their desired values. The advantages of the nonlinear controller become more pronounced in scenarios that involve larger spacing deviations, where stronger acceleration commands may be required (this is shown in the following subsection).
\subsubsection{Human-Driven Vehicles Platoon  {With Automated Leader}}

We consider now a platoon maneuver extracted from the OpenACC dataset (AstaZero platoon 3, time interval $[600,800]$), comprising of four human-driven followers and one automated leader. This particular platoon maneuver is shown in Figure \ref {Fig2}. We consider again the linear CTH controller \eqref{CTH:linear} and the nonlinear controller \eqref{GrindEQ__3_1_} with \eqref{sigma:nonlinear}. We choose control gain $k_i=\frac{1.2}{h_i}$,  $v_{\max}=25$, $\tilde{H}= \frac{v_{max}}{2}$, and $\psi(v)$ given by \eqref{psi:example}. The initial conditions (extracted from the dataset) are $v_{0}(0) = 16.08 \left(\frac{m}{s} \right) $, $v_{1} (0)= 16.18 \left(\frac{m}{s} \right) $, $v_{2}(0) = 16.59 \left(\frac{m}{s} \right) $, $v_{3} (0)= 17.10 \left(\frac{m}{s} \right) $, $v_{4} (0) = 16.66 \left(\frac{m}{s} \right) $; $s_{1 } (0) = 28.96 \nobreakspace (m)$, $s_{2}(0) = 51.90 \nobreakspace (m)$, $s_{3}(0) = 55.66 \nobreakspace (m)$, and $s_{4}(0) = 26.22 \nobreakspace (m)$. 

We first consider that all vehicles have the same time-headway $h_i=0.8 \nobreakspace (s)$ and the standstill distance $r_i=10 \nobreakspace (m)$ with the leading vehicle's trajectory taken from the dataset. Figure~\ref{Fig3} shows the accelerations and speeds using the  linear and nonlinear CTH controllers, respectively. Due to the small time-headway, the spacing error $H_i$ is initially large, leading some vehicles using the linear controller to very high acceleration values and speed values that exceed the speed limit $v_{\max}=25 \left(\frac{m}{s} \right)$. Both issues are avoided when using the nonlinear controller \eqref{GrindEQ__3_1_} with \eqref{sigma:nonlinear}. In Figure~\ref{Fig4} is shown the same scenario when vehicles have different time-headways, that is, when $h_1=0.8$, $h_2=1.5$, $h_3=1.5$, $h_4=0.8$, and $r_i=10 \nobreakspace (m)$, $i=1,...,4$.

In Tables \ref{table1} and \ref{table2}  we compare the cost values of safety, comfort, fuel consumption, and tracking for the OpenACC data, the linear controller \eqref{CTH:linear}, and the nonlinear controller \eqref{GrindEQ__3_1_}, \eqref{sigma:nonlinear} for the simulation results presented in Figure~\ref{Fig3} and in Figure~\ref{Fig4}, respectively. In both cases the nonlinear controller presents improved performance, especially in the case of large initial spacing error $H_i$ where the comfort cost values in \eqref{C_3} improve significantly. From Tables \ref{table1} and \ref{table2} we further observe that performance under \eqref{GrindEQ__3_1_} is always better than under \eqref{CTH:linear}. This is explained in the present scenario considered from the fact that \eqref{GrindEQ__3_1_} delivers more cautious acceleration commands. As compared with the OpenACC human-driven platoon both \eqref{CTH:linear} and \eqref{GrindEQ__3_1_} behave better, except in terms of the fuel consumption index for the case corresponding to Figure \ref{Fig3} responses. This is explained from the fact that for large initial spacing deviations from the desired equilibrium, both controllers deliver higher accelerations for driving the platoon at equilibrium.

\begin{figure}[pos=ht]
	\begin{center}
		\includegraphics[width = 0.41\linewidth]{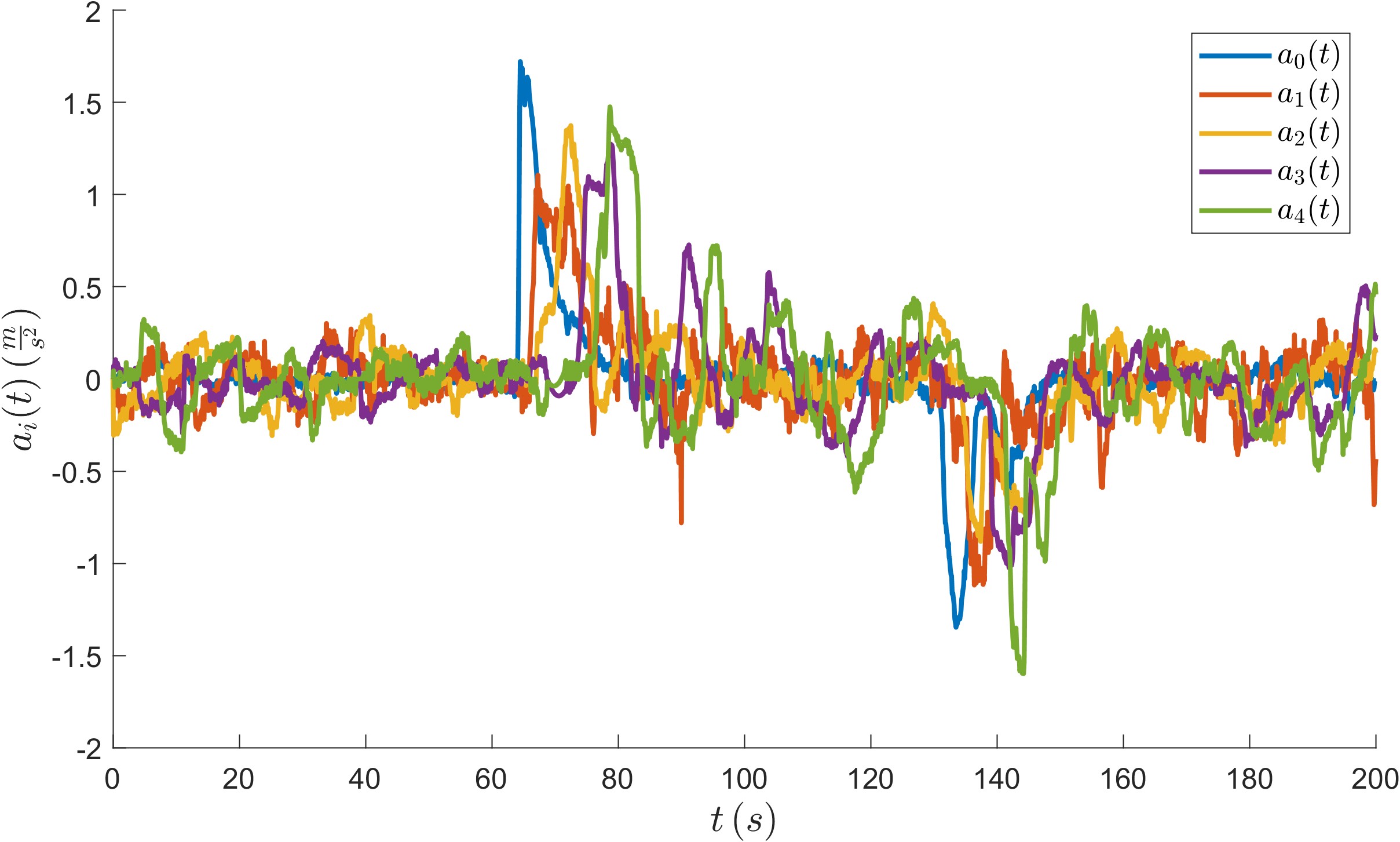}
		\includegraphics[width = 0.41\linewidth]{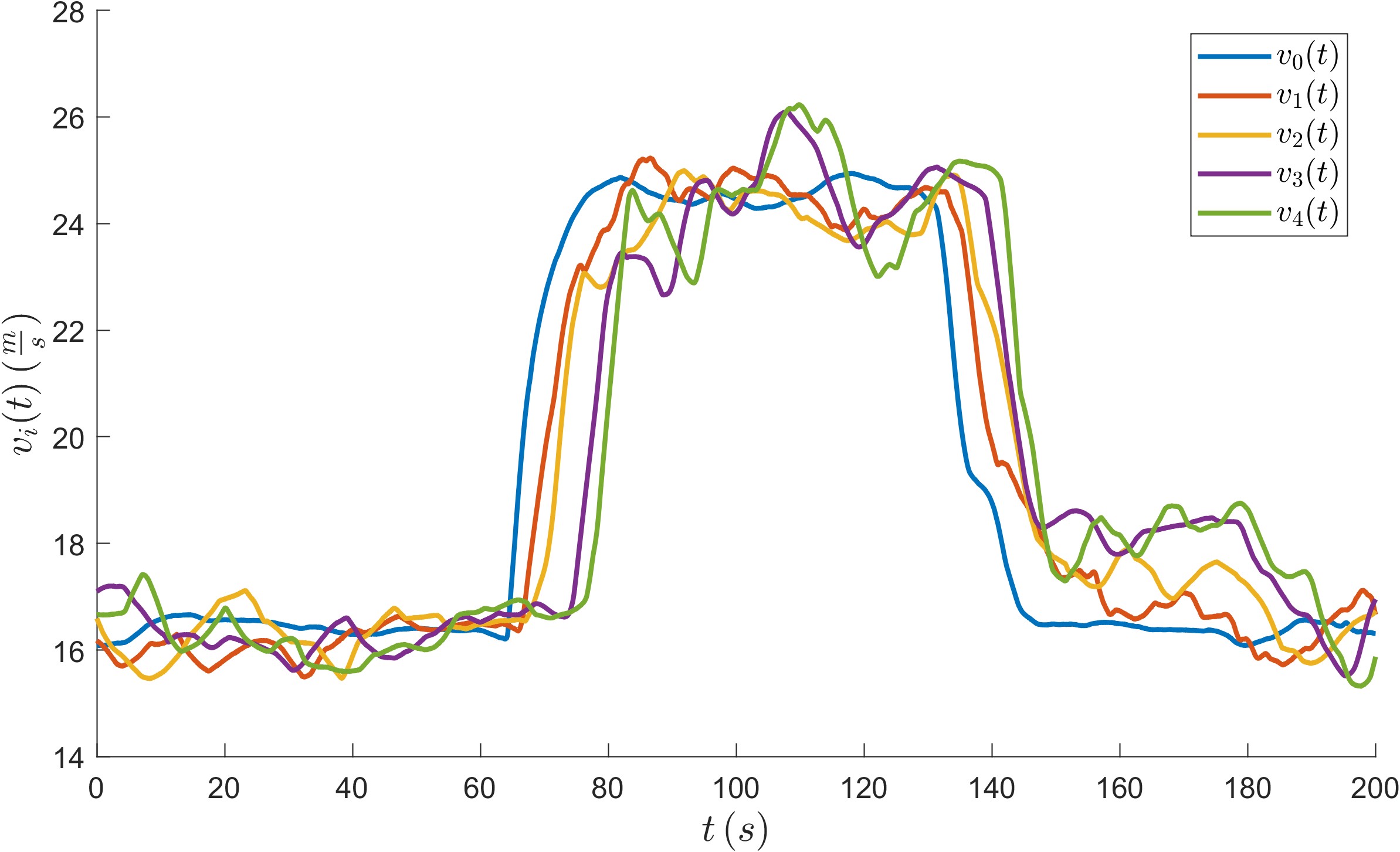}
		\includegraphics[width = 0.41\linewidth]{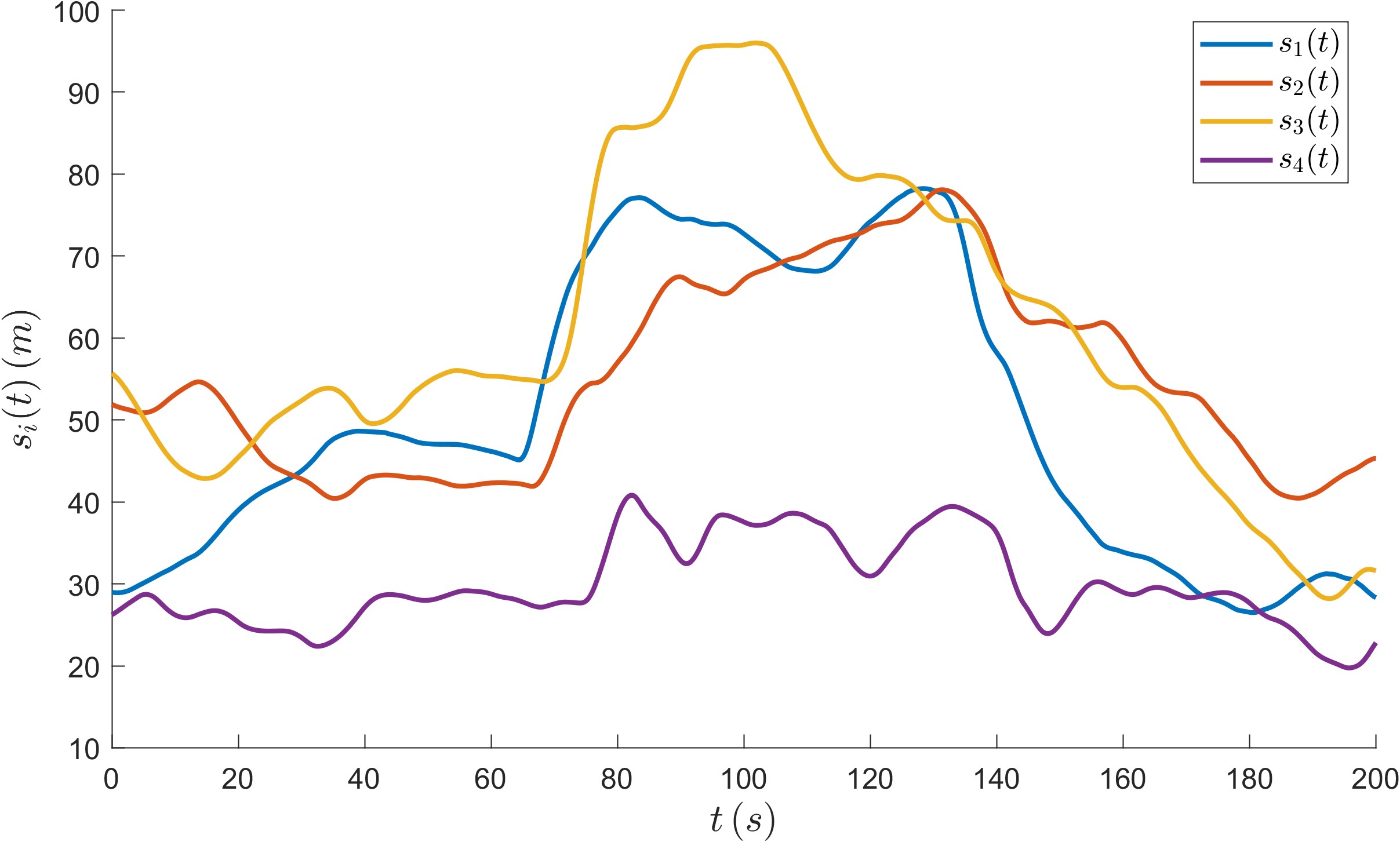}
		\caption{Acceleration (top left), speed (top right), and spacing (bottom) of four human-driven vehicles following a leader with respect to OpenACC dataset.}\label {Fig2}
		\vspace*{-5pt}\end{center}
\end{figure}

\begin{figure}[pos=htbp]
	\begin{center}
	
	\makebox[0.5\linewidth]{\textbf{Linear Controller \eqref{CTH:linear}}}\hfill
	\makebox[0.5\linewidth]{\textbf{Nonlinear Controller \eqref{GrindEQ__3_1_}, \eqref{sigma:nonlinear}}}
	
\includegraphics[width=0.4\linewidth]{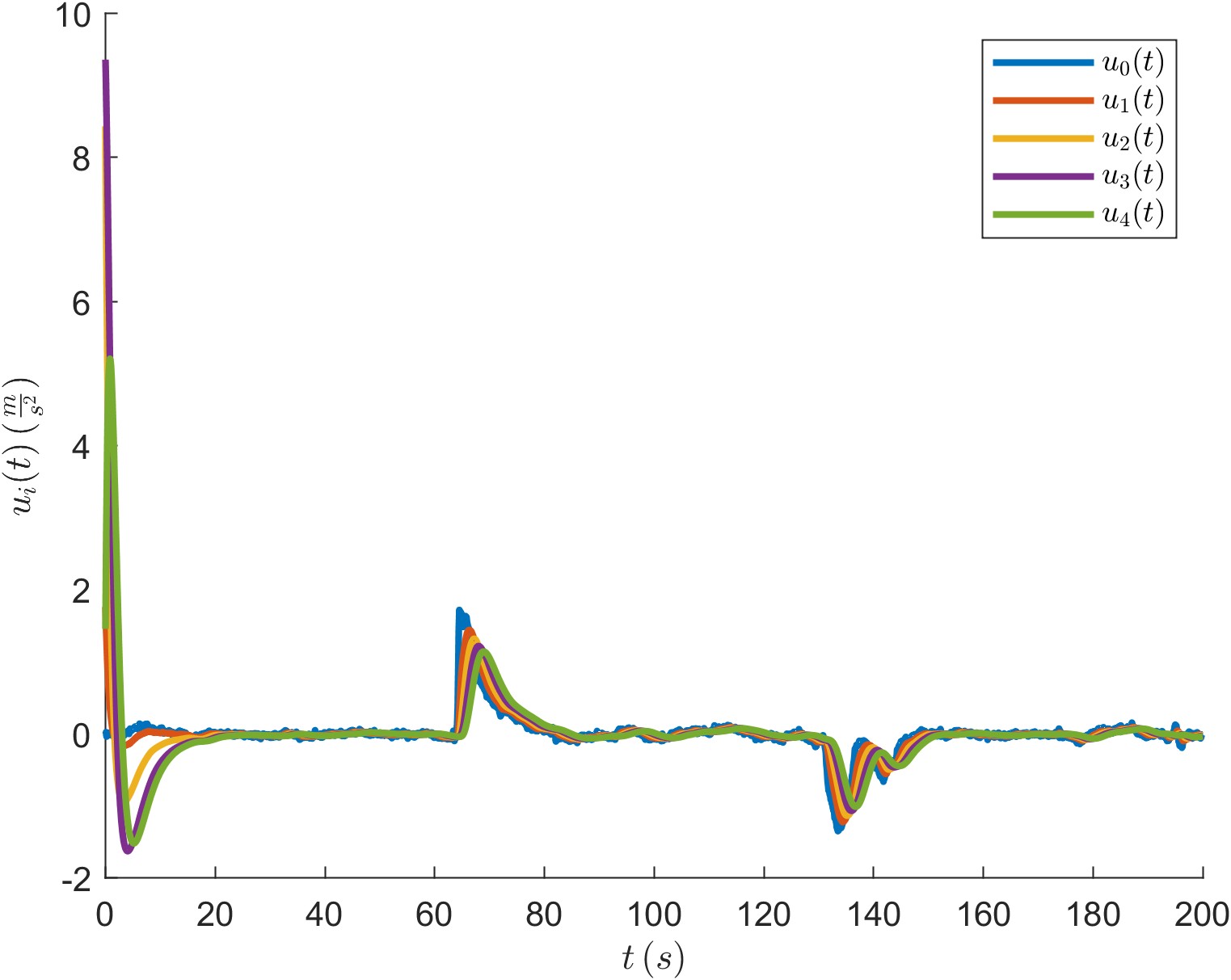}\hfil
\includegraphics[width=0.4\linewidth]{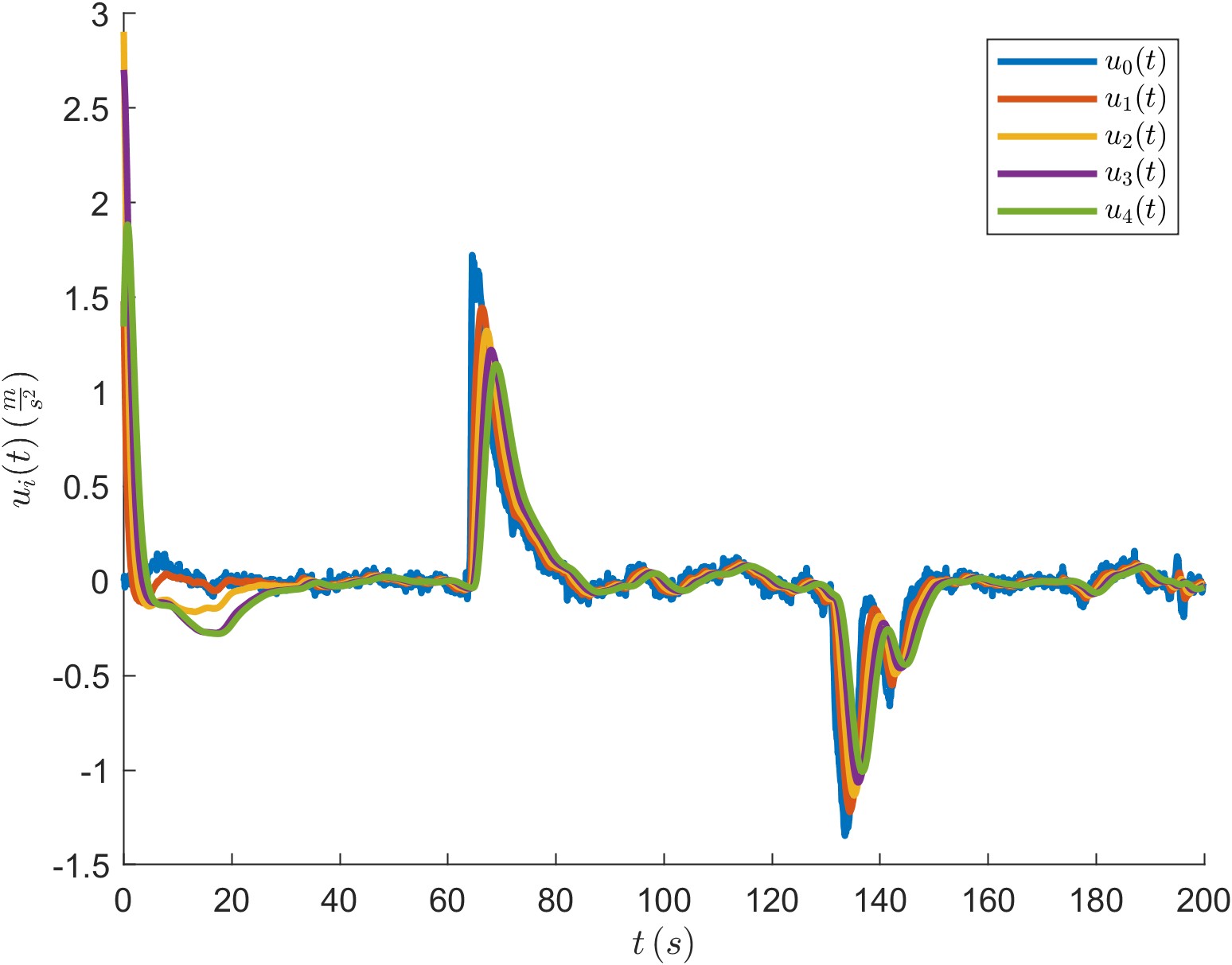} 

\includegraphics[width=0.4\linewidth]{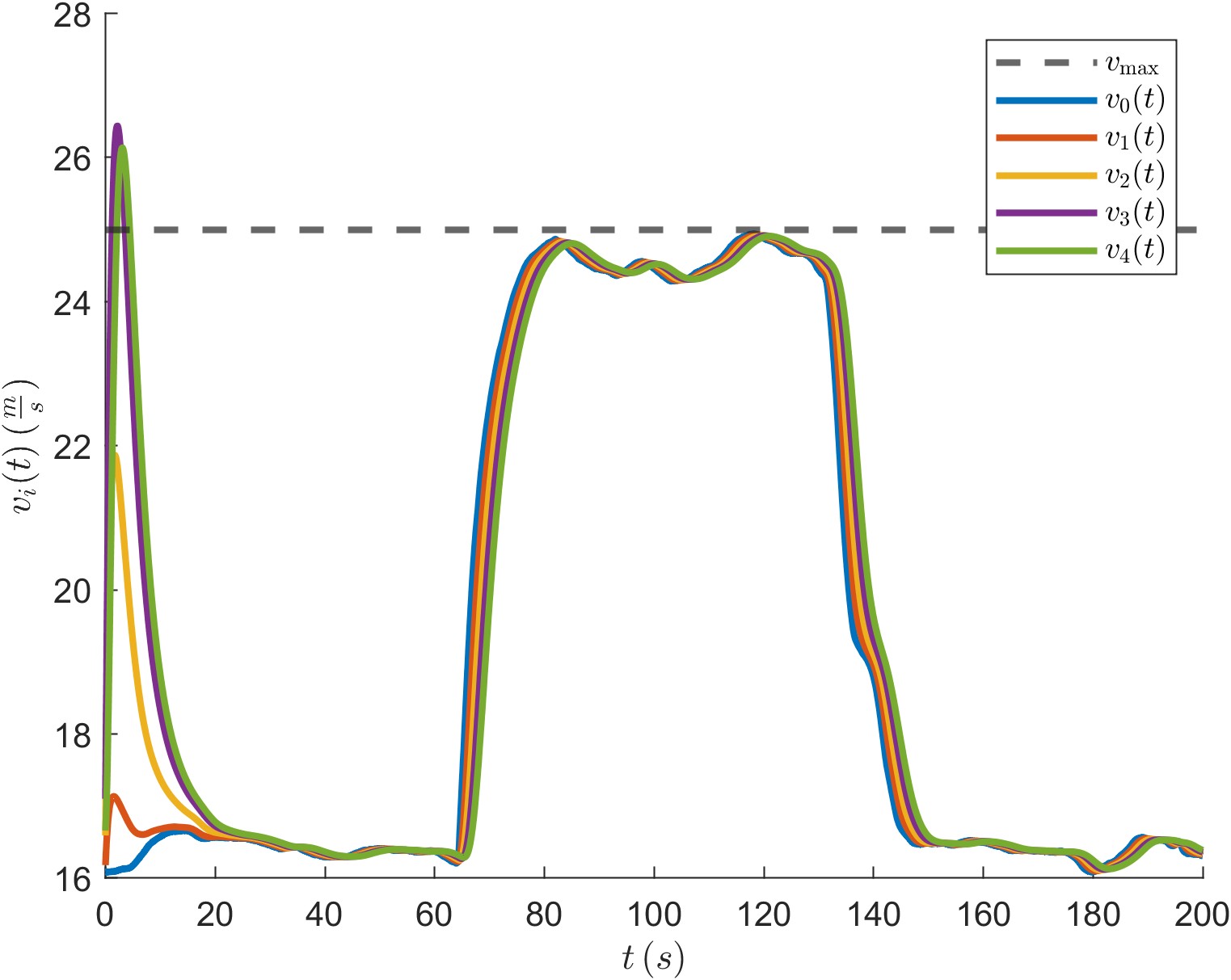}\hfil
\includegraphics[width=0.4\linewidth]{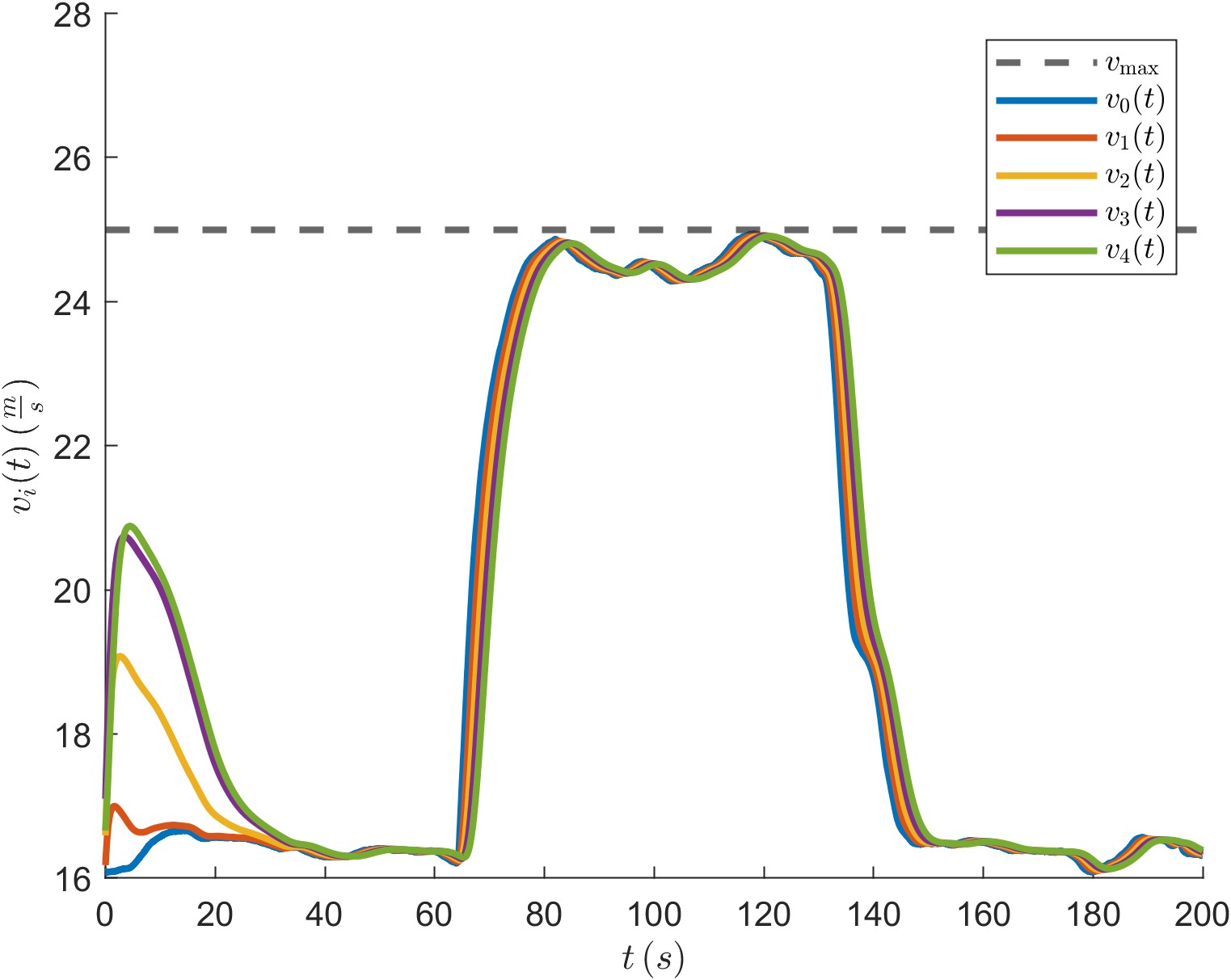} 
		\caption{Comparison between the linear CTH controller \eqref{CTH:linear} (left) and the nonlinear CTH controller \eqref{GrindEQ__3_1_} with \eqref{sigma:nonlinear} (right).  The rows show acceleration (top), speed (bottom) when all vehicles share the same time-headway $h_i=0.8 (s)$.}\label{Fig3}
		\vspace*{-5pt}\end{center}
\end{figure}

 \FloatBarrier
\begin{figure}[pos=htbp]
\centering

\makebox[0.45\linewidth]{\textbf{Linear Controller \eqref{CTH:linear}}}\hfill
\makebox[0.45\linewidth]{\textbf{Nonlinear Controller \eqref{GrindEQ__3_1_}, \eqref{sigma:nonlinear}}}

\vspace{0.4em}

\includegraphics[width=0.45\linewidth]{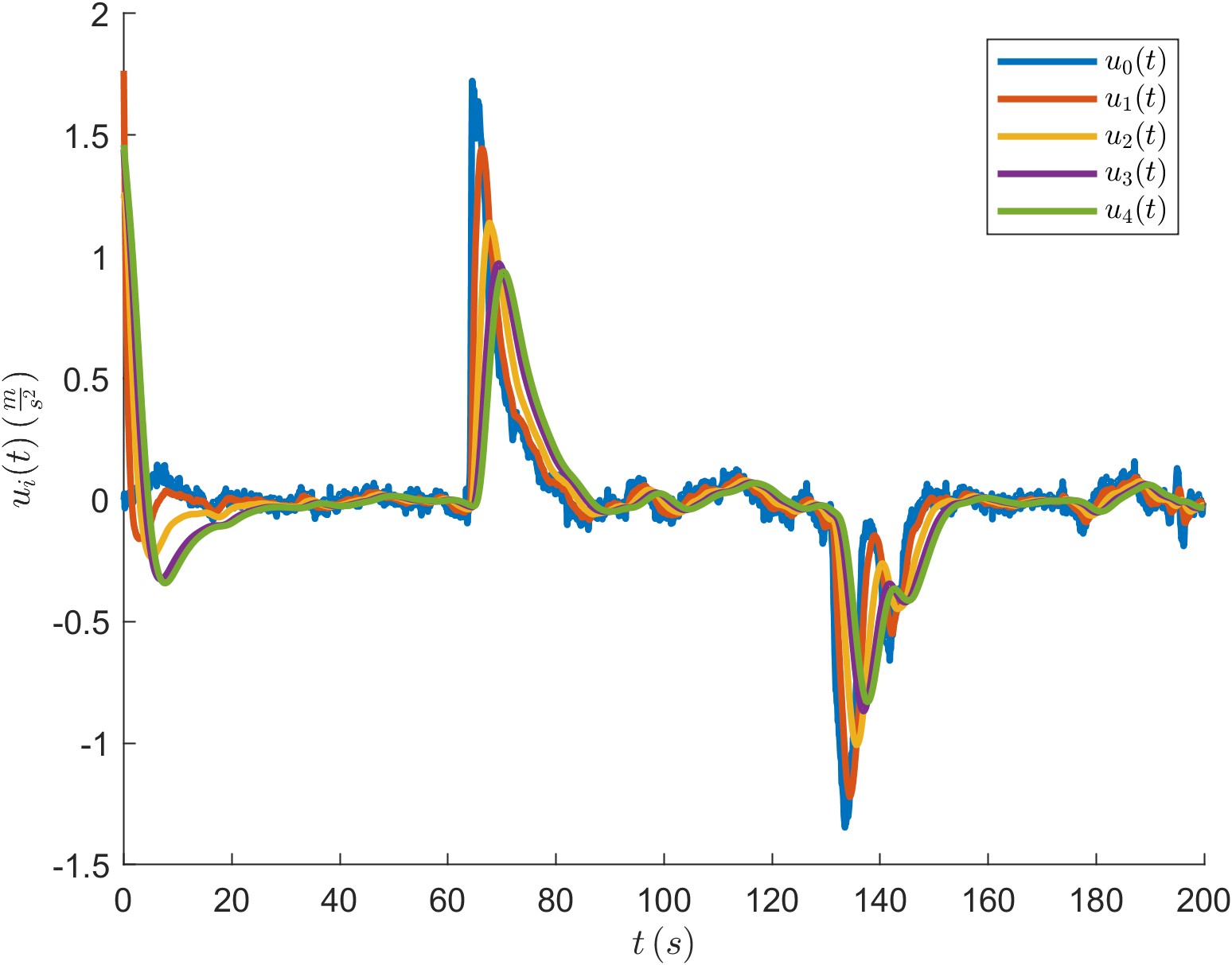}\hfil
\includegraphics[width=0.45\linewidth]{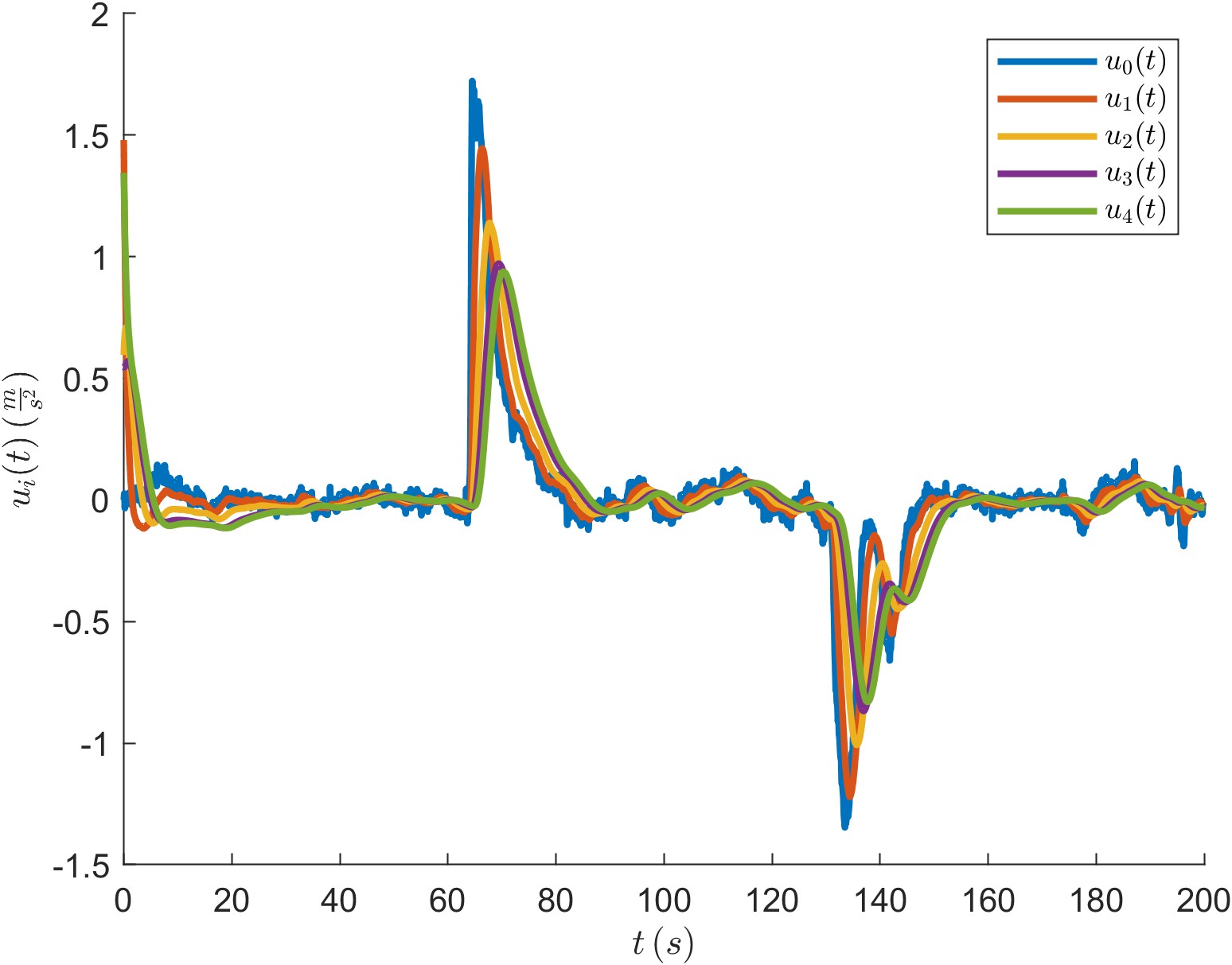} 

\includegraphics[width=0.45\linewidth]{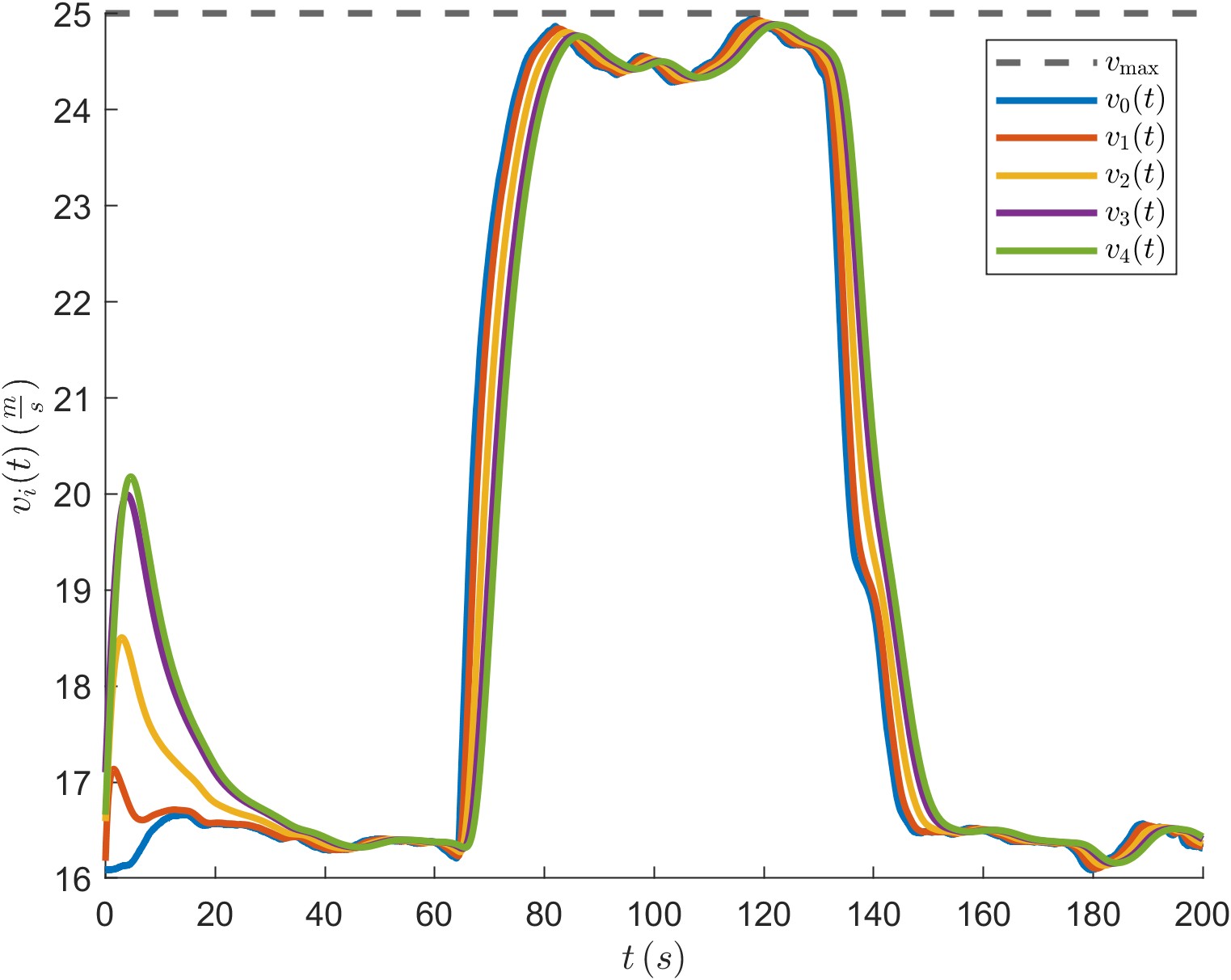}\hfil
\includegraphics[width=0.45\linewidth]{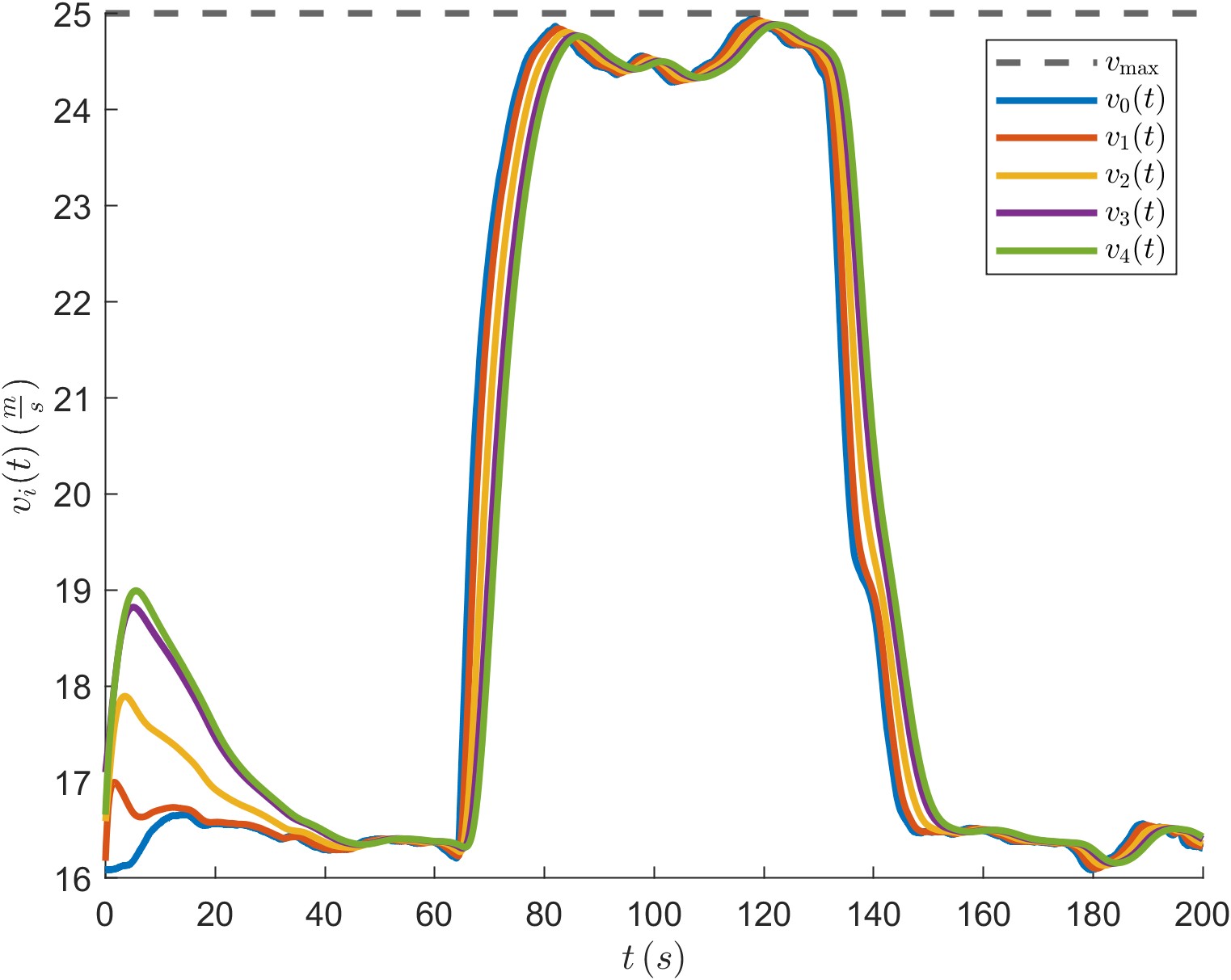} 

\includegraphics[width=0.45\linewidth]{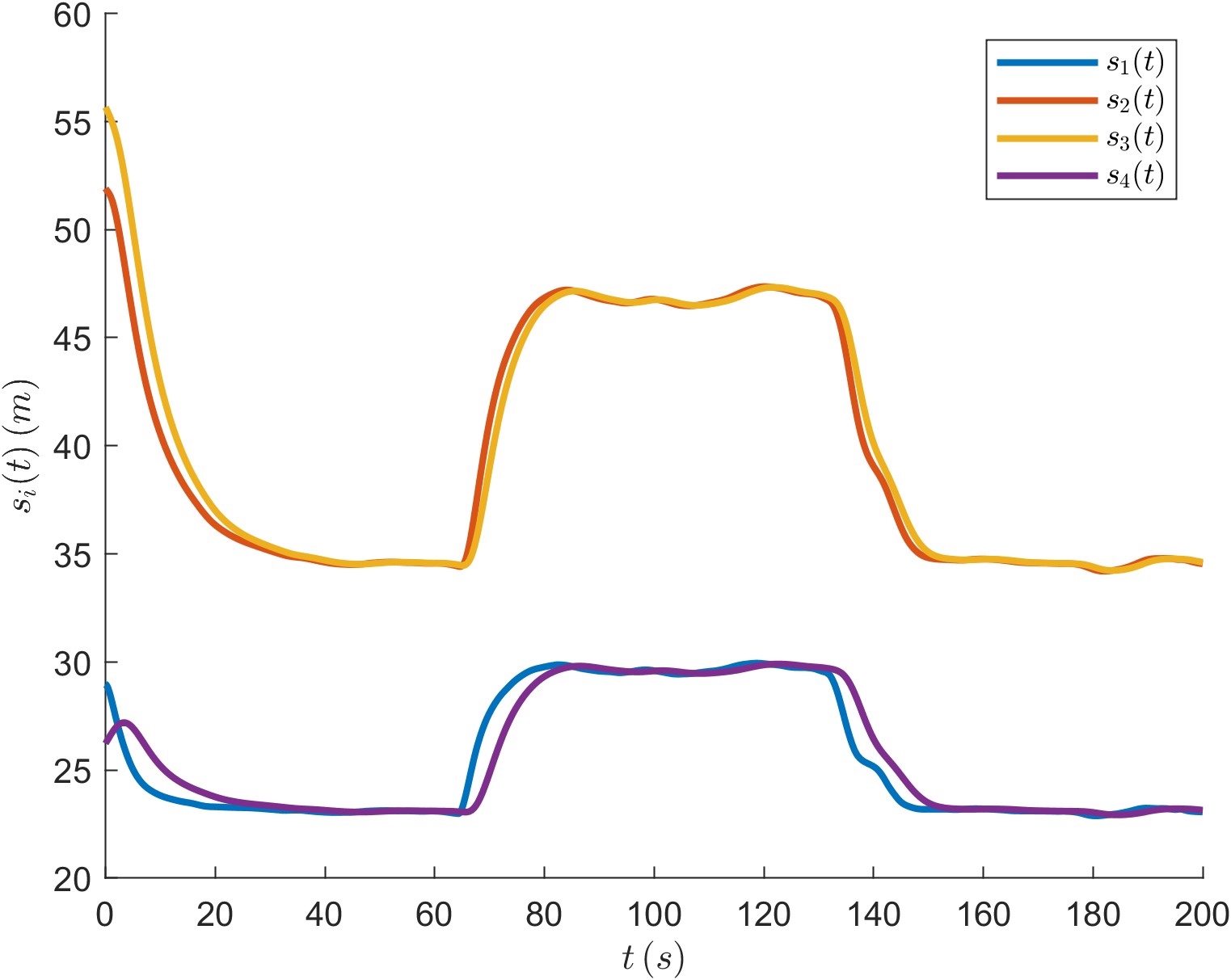}\hfil
\includegraphics[width=0.45\linewidth]{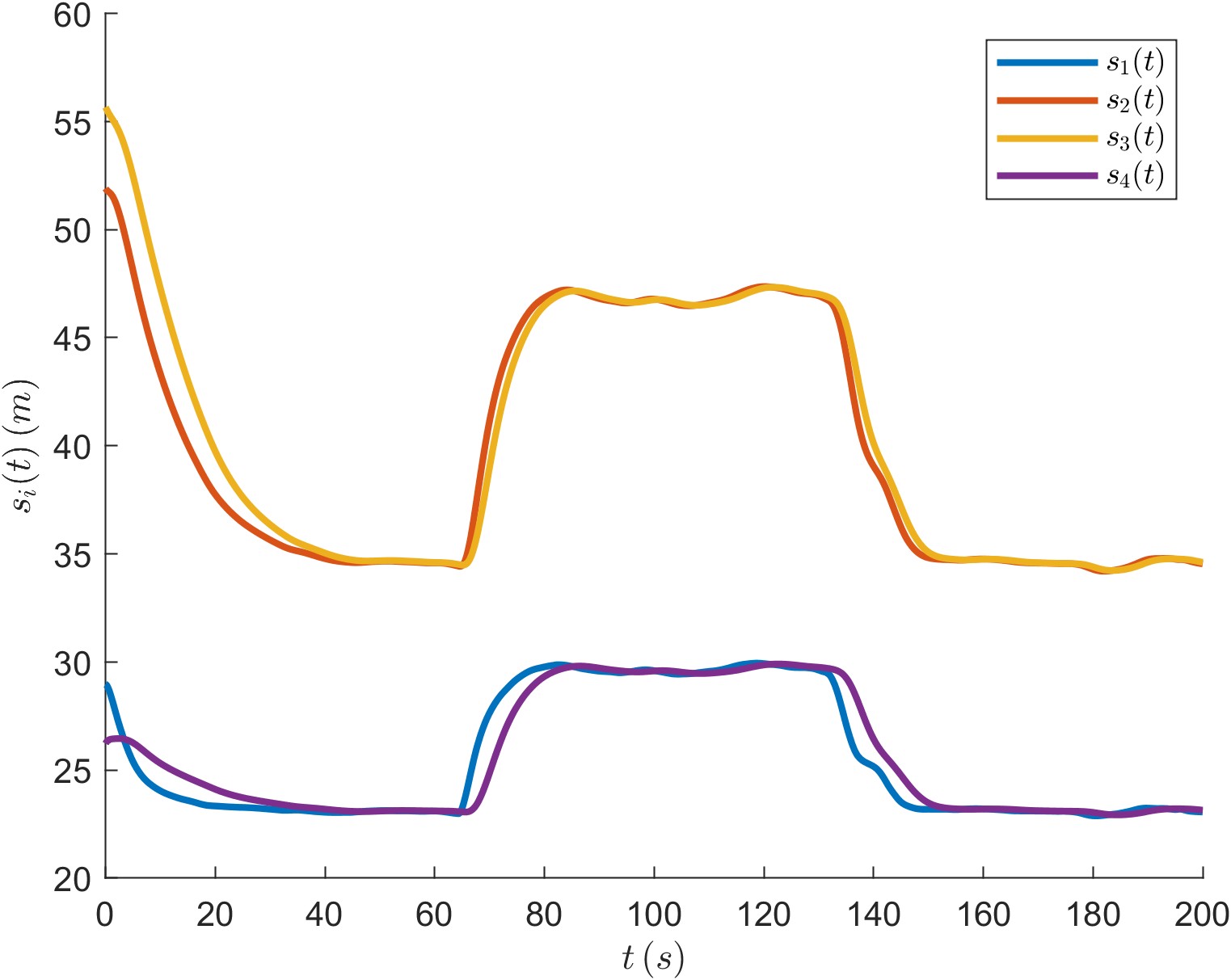} 

\caption{Comparison between the linear CTH controller \eqref{CTH:linear} (left) and the nonlinear CTH controller \eqref{GrindEQ__3_1_} with \eqref{sigma:nonlinear} (right).  The rows show acceleration (top), speed (middle), and spacing (bottom) when  vehicles have a different time-headway.}
\label{Fig4}
\end{figure}

\begin{table}[t]
	\centering
	\caption{Cost values of safety, comfort, fuel consumption, and tracking for OpenACC data, the linear, and the nonlinear CTH-based strategies corresponding to the results shown in Figure~\ref{Fig3}.}
	\scalebox{1}{\begin{tabular}{||c| c  c  c||} 
			\hline
			\backslashbox{Cost function}{Strategy}& OpenACC & Linear \eqref{CTH:linear} & Nonlinear \eqref{GrindEQ__3_1_}, \eqref{sigma:nonlinear} \\ 
			\hline\hline
			Safety & $12.10$ & $10.98$ &  $5.98$ \\ 
			\hline
			Comfort & $290.53$ & $152.70$  &  $18.16$ \\ 
			\hline
			Fuel Consumption & $1624.45$ & $1781.88$  &  $1632.27$ \\ 
			\hline
			Tracking & $644.56$ & $245.99$  &  $128.54$ \\ 
			\hline
	\end{tabular}}
	\label{table1}
\end{table}  

\begin{table}[t]
	\centering
	\caption{Cost values of safety, comfort, fuel consumption, and tracking for OpenACC data, the linear, and the nonlinear CTH-based strategies corresponding to the results shown in Figure~\ref{Fig4}.}
	\scalebox{1}{\begin{tabular}{||c| c  c  c||} 
			\hline
			\backslashbox{Cost function}{Strategy}& OpenACC & Linear \eqref{CTH:linear}  & Nonlinear \eqref{GrindEQ__3_1_}, \eqref{sigma:nonlinear} \\ 
			\hline\hline
			Safety & $12.10$ & $3.47$ &  $2.91$ \\ 
			\hline
			Comfort & $290.53$ & $9.86$  &  $8.14$ \\ 
			\hline
			Fuel Consumption & $1624.45$ & $1611.12$  &  $1603.81$ \\ 
			\hline
			Tracking & $644.56$ & $105.31$  &  $92.48$ \\ 
			\hline
	\end{tabular}}
	\label{table2}
\end{table}
\FloatBarrier

\pagebreak
\subsubsection{Automated Vehicles Platoon}

\begin{figure}[pos=b]
	\begin{center}
		\includegraphics[width = 0.45\linewidth]{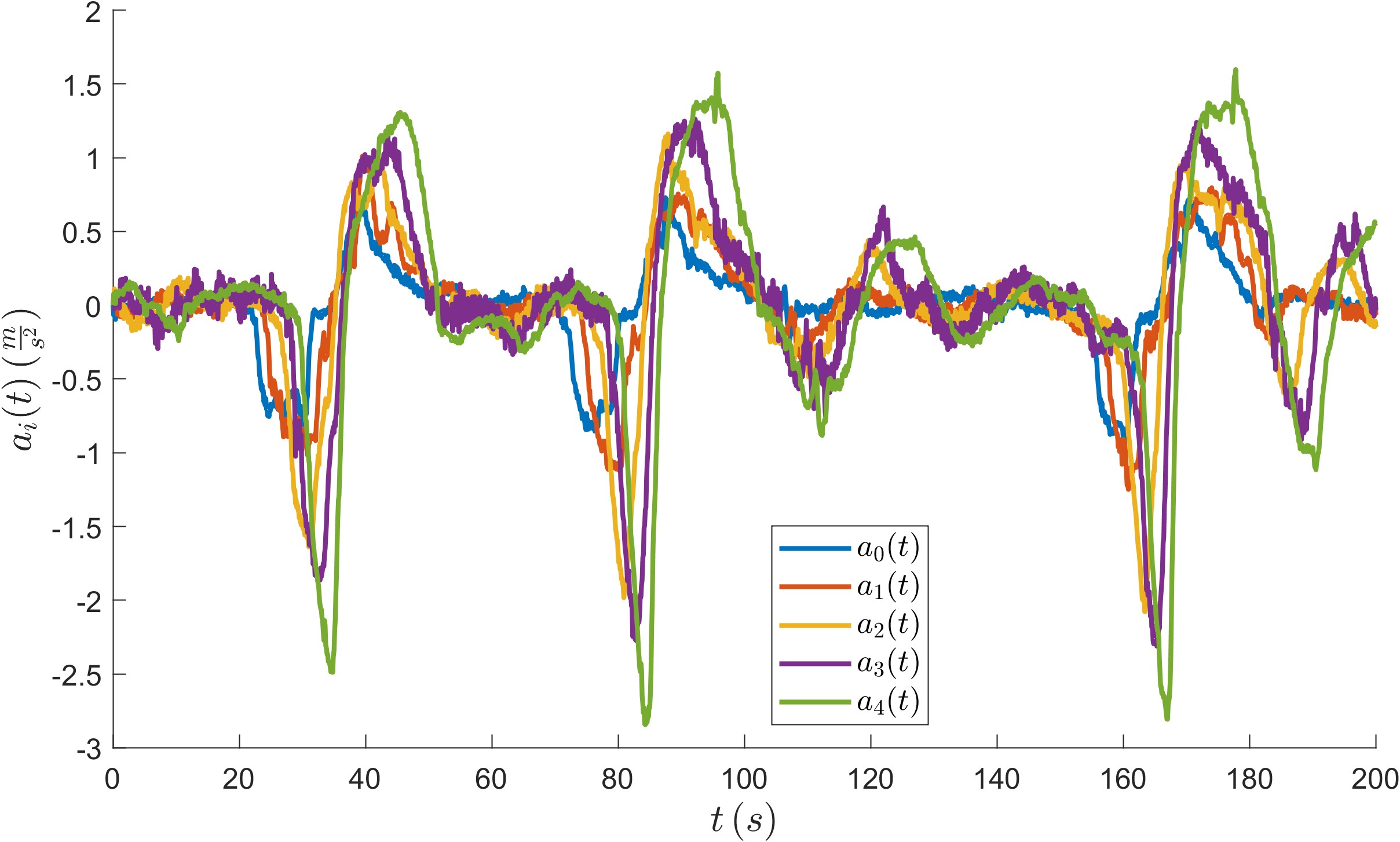}
		\includegraphics[width = 0.45\linewidth]{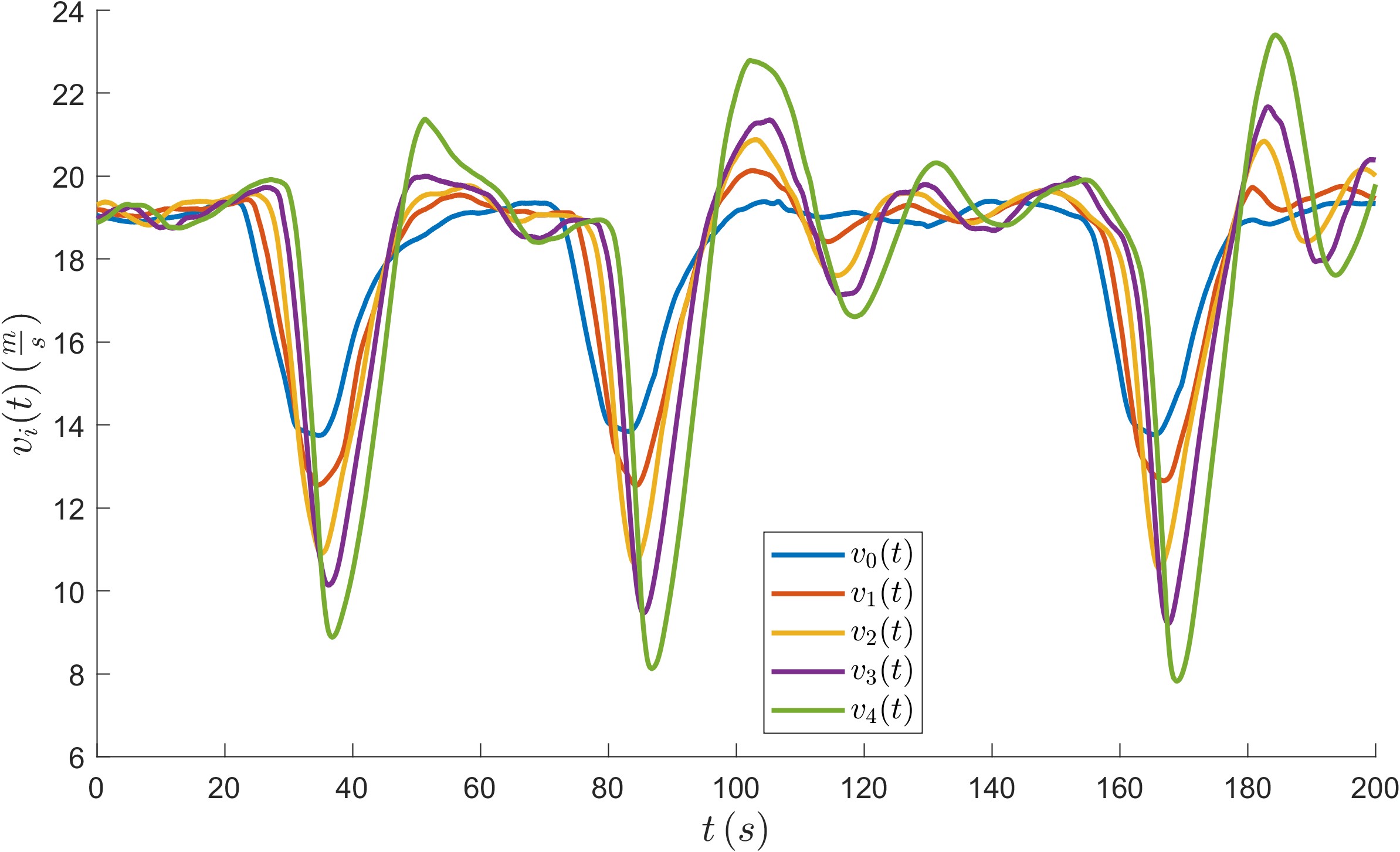}
		\includegraphics[width = 0.45\linewidth]{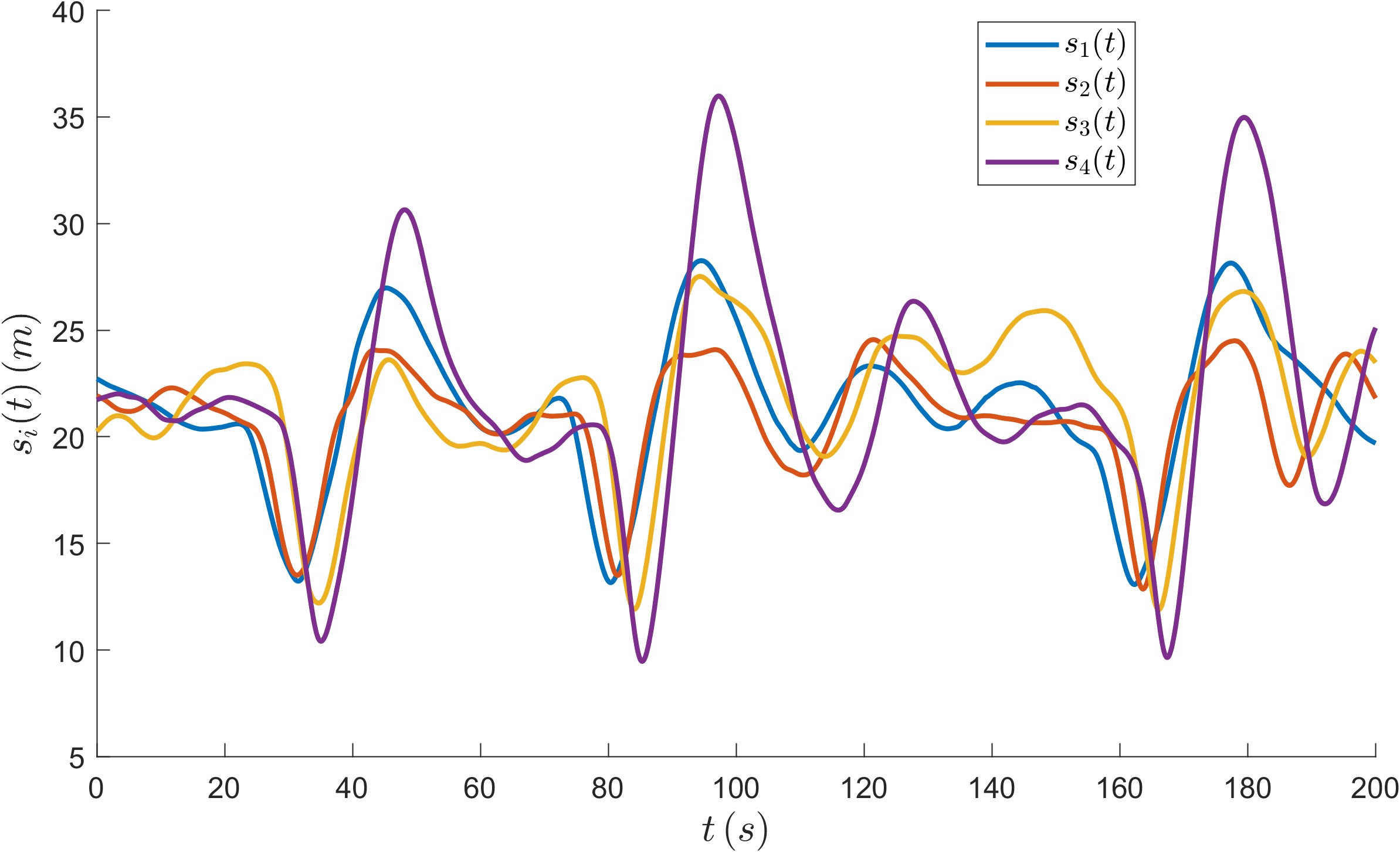}
		\caption{Acceleration (top left), speed (top right), and spacing (bottom) of four vehicles following a leader with trajectories taken from the OpenACC dataset.}\label{Fig5}
		\vspace*{-5pt}\end{center}
\end{figure}
We consider now a platoon maneuver extracted from the OpenACC dataset (AstaZero platoon 2, time interval $[600,800]$), comprising of five commercial automated vehicles ($n=4$ followers and leader). This particular platoon maneuver is shown in Figure \ref {Fig5}. We consider again the linear CTH controller \eqref{CTH:linear} and the nonlinear controller \eqref{GrindEQ__3_1_} with $\sigma$ given by \eqref{sigma:nonlinear}. We choose control gain $k_i=\frac{1.2}{h_i}$, $\beta_i=\frac{1}{h_i}\left(k_i-\frac{1}{h_i}\right)$, $v_{\max}=25$, $\tilde{H}= \frac{v_{max}}{2}$, and $\psi(v)$ given by \eqref{psi:example}. The initial conditions (extracted from the dataset) are  $v_{{\rm 0}_0} = 19.05 \left(\frac{m}{s} \right) $, $v_{{\rm 1}_0} = 19.19 \left(\frac{m}{s} \right) $, $v_{{\rm 2}_0} = 19.30 \left(\frac{m}{s} \right) $, $v_{{\rm 3}_0} = 18.99 \left(\frac{m}{s} \right) $, $v_{{\rm 4}_0} = 18.87 \left(\frac{m}{s} \right) $; $s_{1_0} = 22.71 \nobreakspace (m)$, $s_{2_0} = 21.92 \nobreakspace (m)$, $s_{3_0} = 20.25 \nobreakspace (m)$, $s_{4_0} = 21.73 \nobreakspace (m)$. For the application of linear and nonlinear CTH-based controllers we use the same time-headway $h_i=0.6 \nobreakspace (s)$ and standstill distance $r_i=10 \nobreakspace (m)$ with the leading vehicle's trajectory taken from the dataset.

\begin{figure}[pos=t]
\centering

\makebox[0.45\linewidth]{\textbf{Linear Controller \eqref{CTH:linear}}}\hfill
\makebox[0.45\linewidth]{\textbf{Nonlinear Controller \eqref{GrindEQ__3_1_}, \eqref{sigma:nonlinear}}}

\vspace{0.4em}

\includegraphics[width=0.45\linewidth]{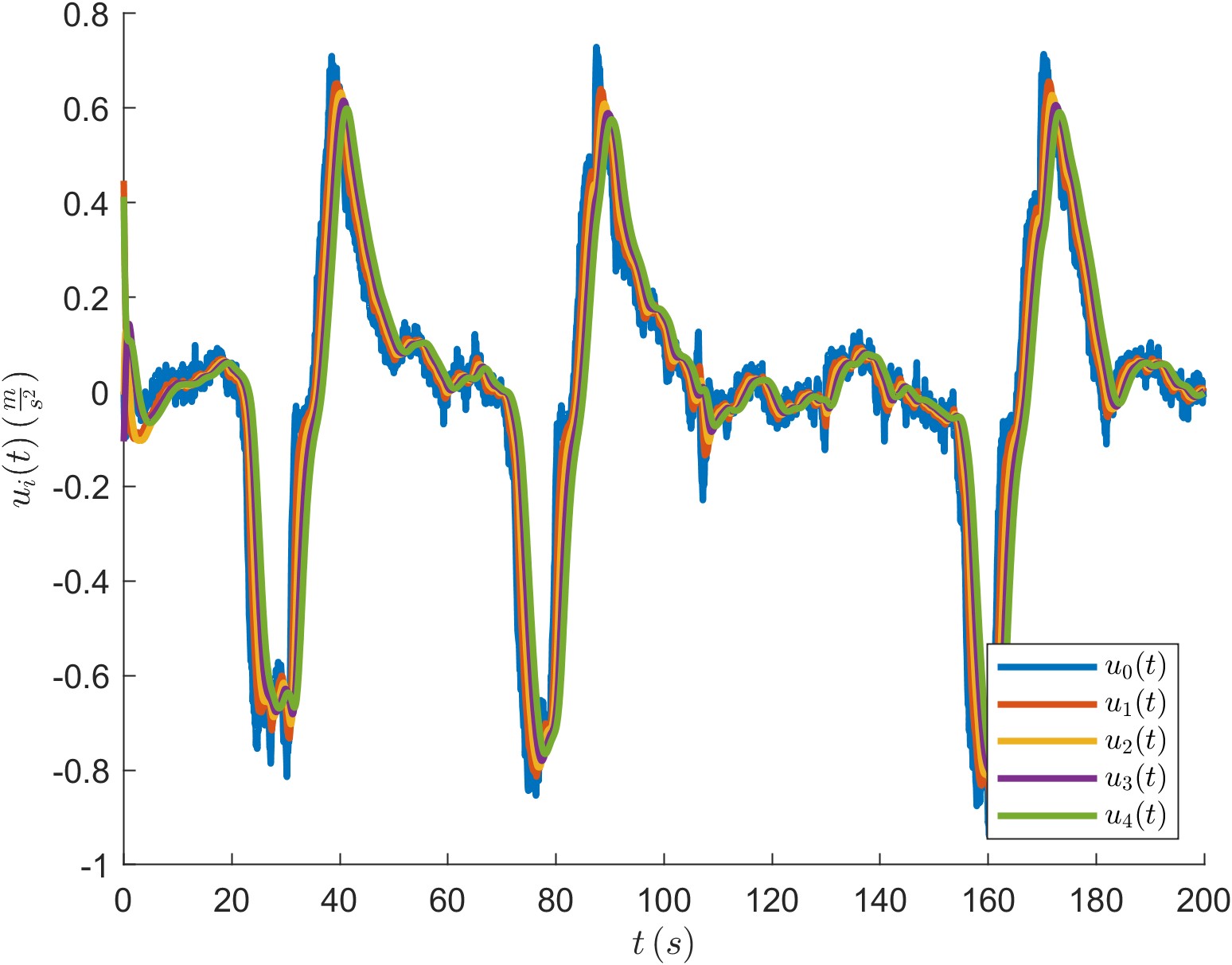}\hfil
\includegraphics[width=0.45\linewidth]{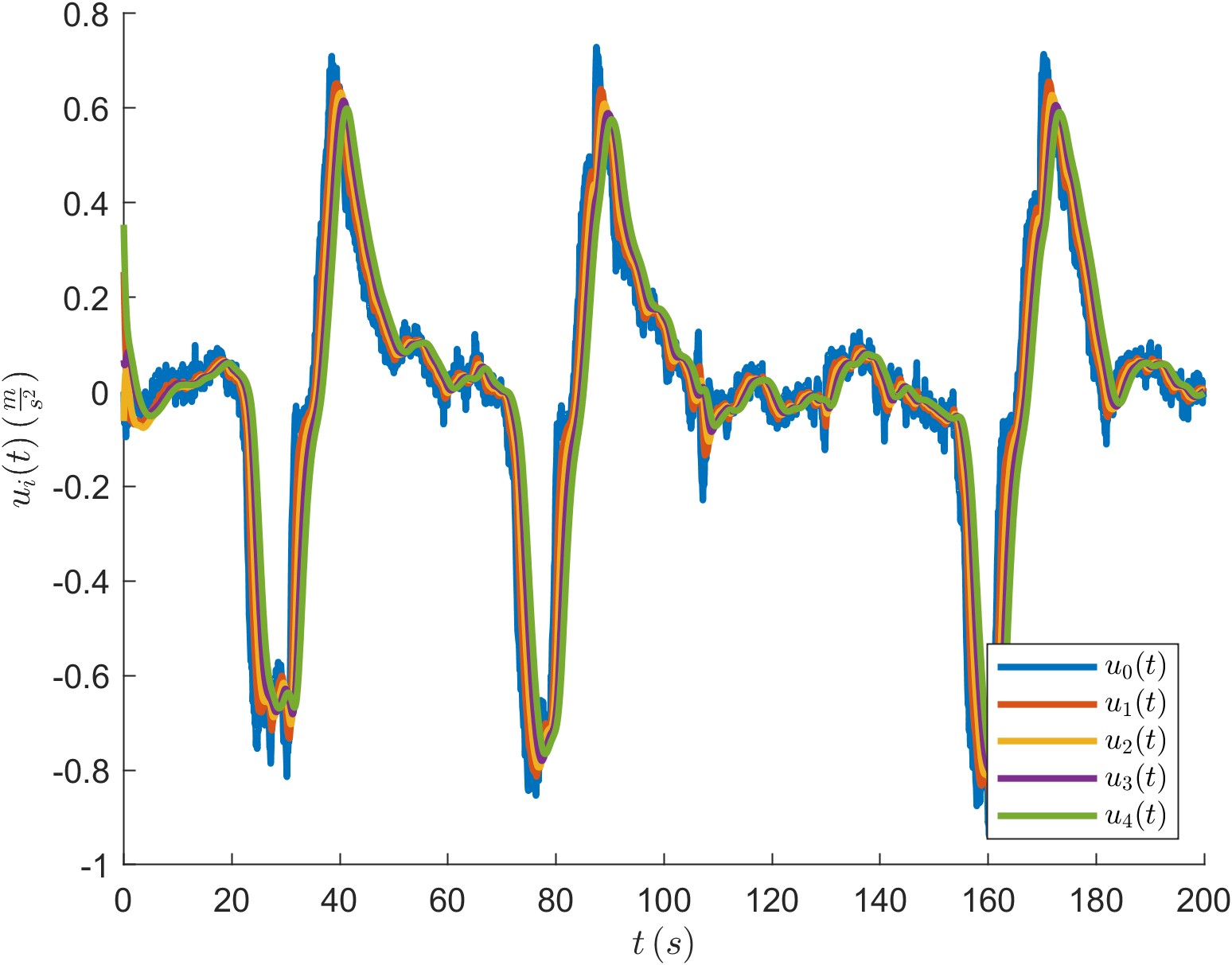} 

\includegraphics[width=0.45\linewidth]{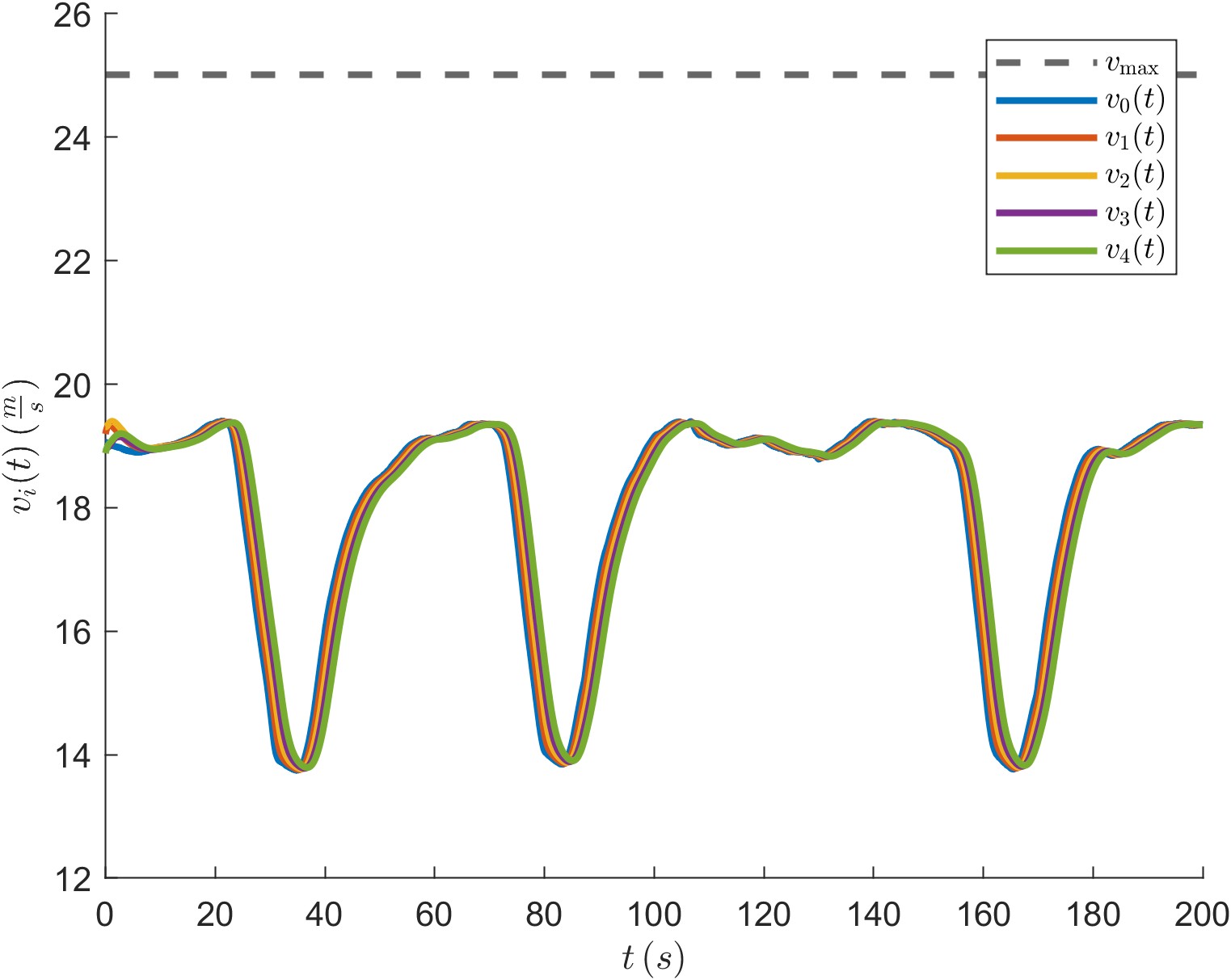}\hfil
\includegraphics[width=0.45\linewidth]{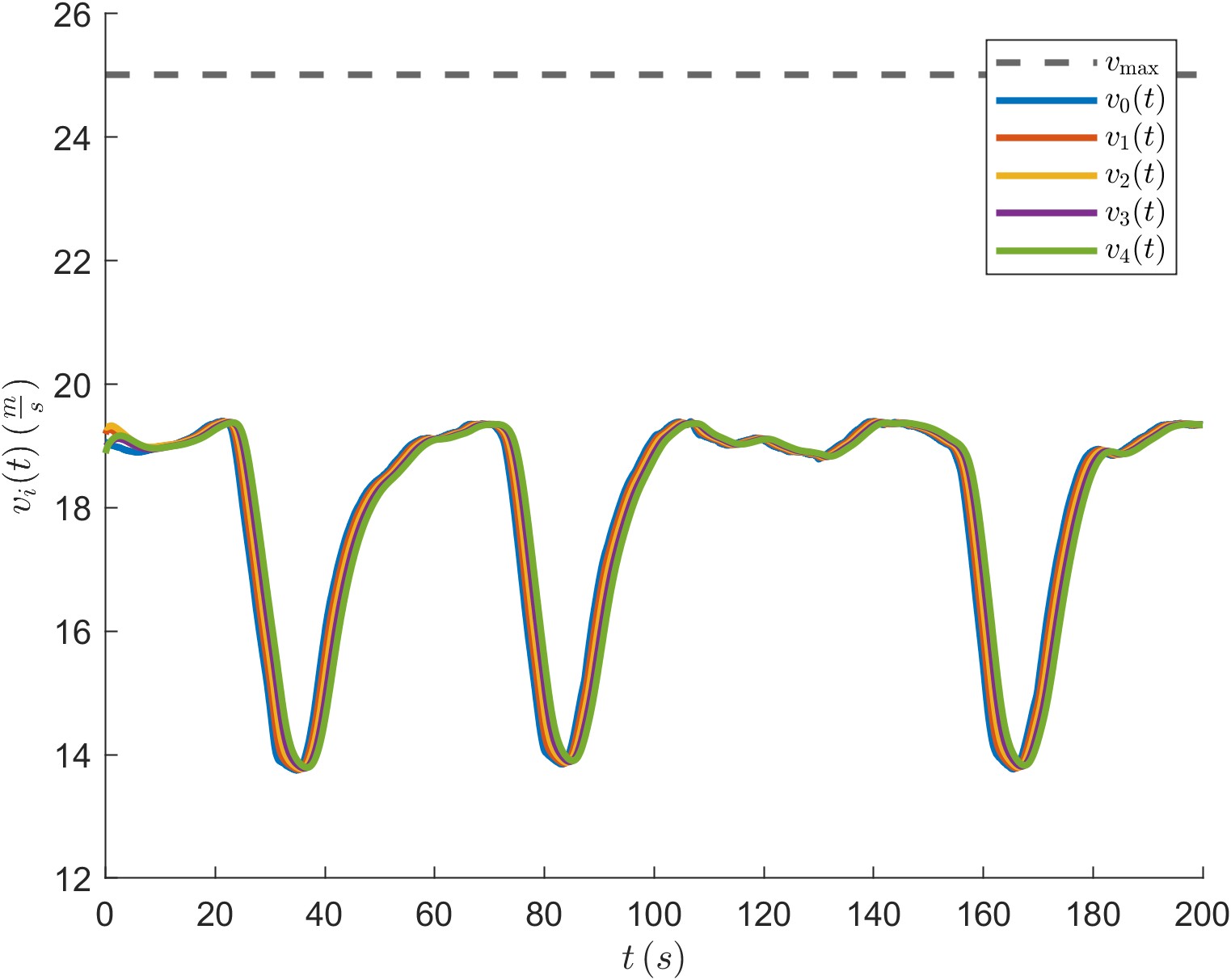} 

\includegraphics[width=0.45\linewidth]{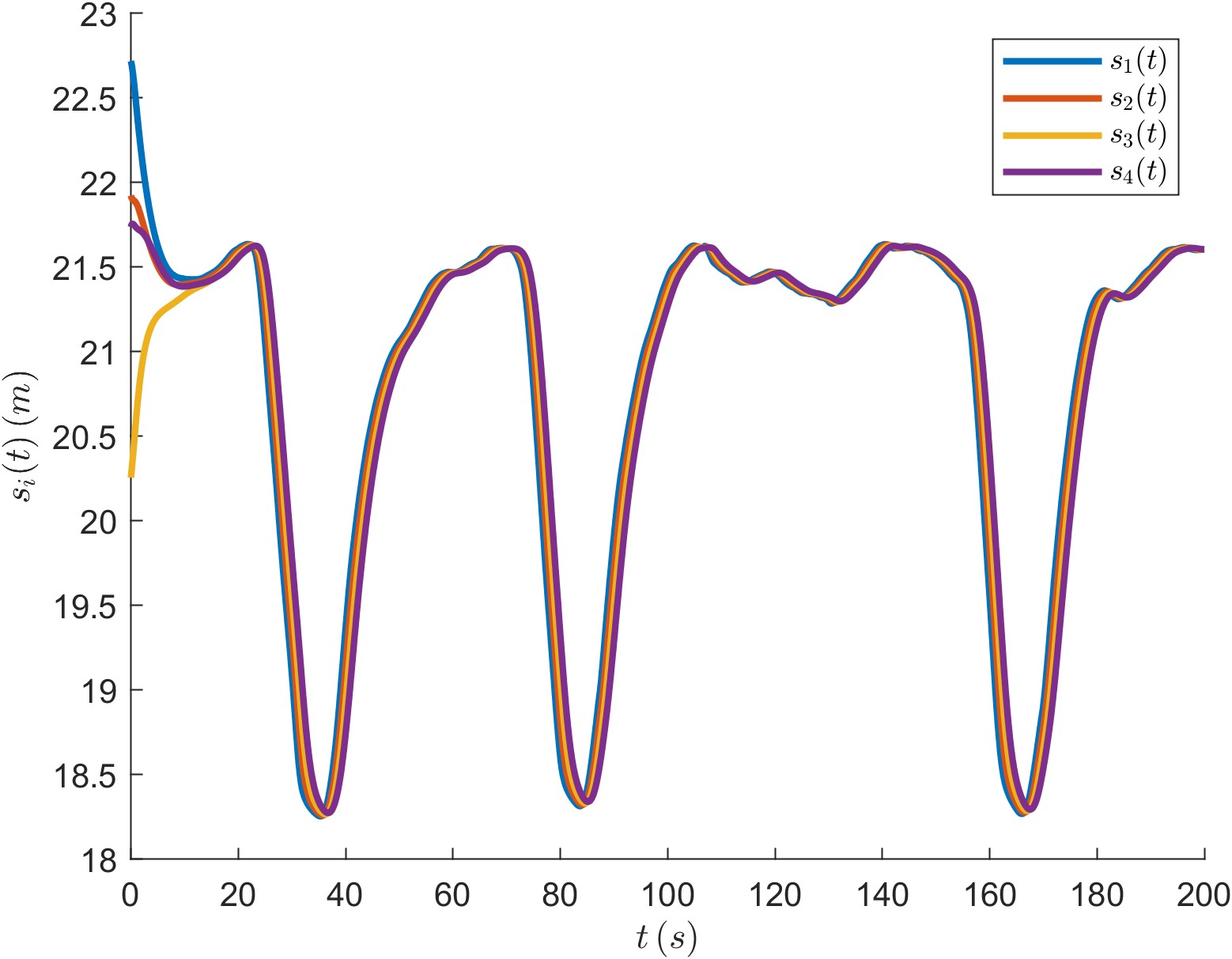}\hfil
\includegraphics[width=0.45\linewidth]{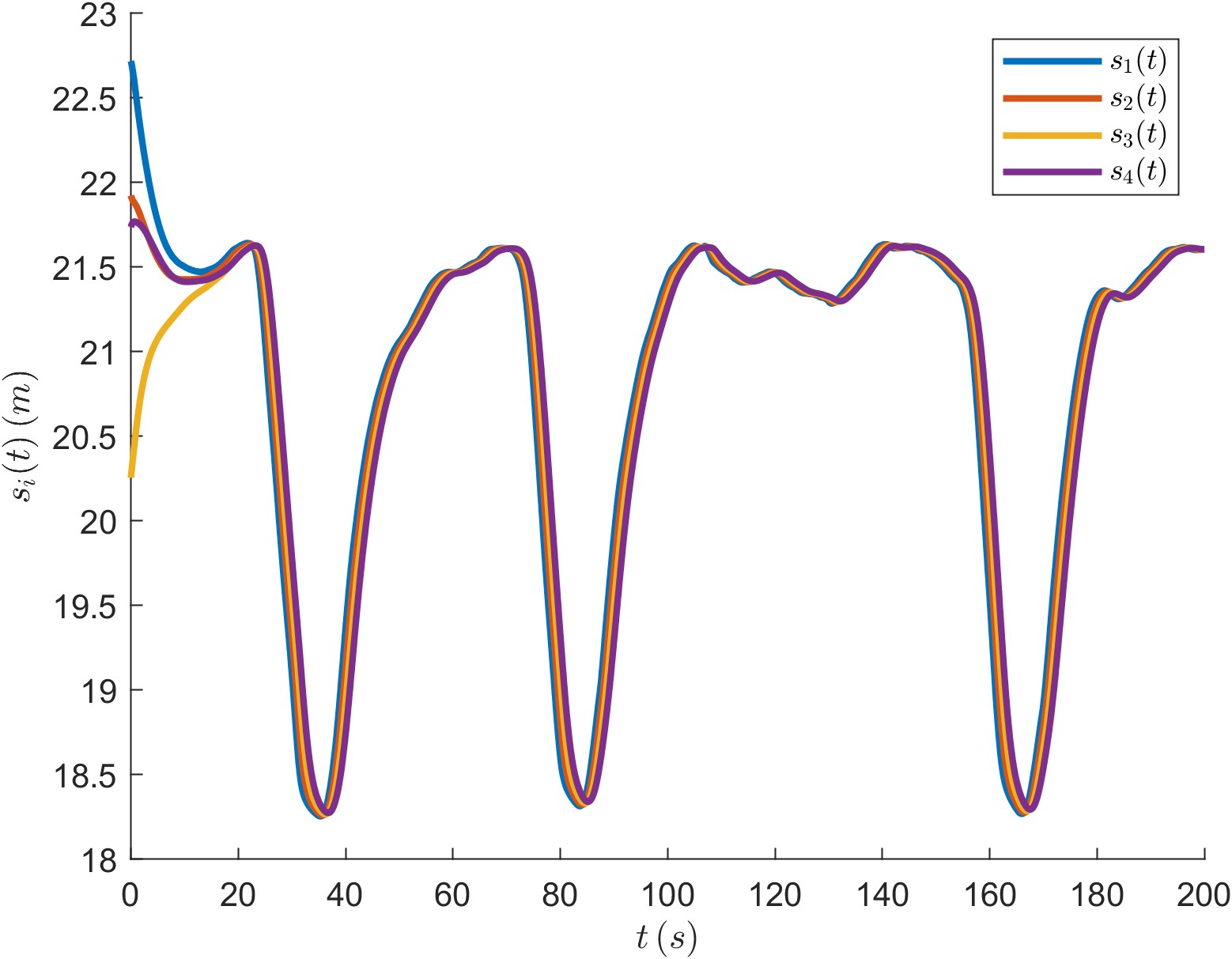} 

\caption{Comparison between the linear CTH controller \eqref{CTH:linear} (left) and the nonlinear CTH controller \eqref{GrindEQ__3_1_} with \eqref{sigma:nonlinear} (right).  The rows show acceleration (top), speed (middle), and spacing (bottom).}
\label{Fig6}
\end{figure}

In Figure~\ref{Fig6} is shown the response of the platoon where the followers use the linear CTH-based controller \eqref{CTH:linear} and the nonlinear CTH-based controller \eqref{GrindEQ__3_1_}, \eqref{sigma:nonlinear}, while the leader's trajectory is extracted from the OpenACC dataset. All stop-and-go waves are suppressed clearly illustrating the string stability guarantees of both the linear and nonlinear CTH controllers. Table \ref{table3} reports the values of the safety, comfort, fuel consumption, and tracking cost functions for the OpenACC data, the linear controller, and the proposed nonlinear controller. Both controllers substantially improve all performance indices compared with the OpenACC trajectories.  This is explained by the fact that the controllers of the commercial, automated vehicles employed in the respective OpenACC experiments, lead to string unstable responses, as also reported in \cite{OpenACC}. Moreover, the linear and nonlinear controllers exhibit very similar performance, with the nonlinear controller providing a slight improvement, particularly in the comfort metric. This behavior is expected since the OpenACC trajectories remain close to the equilibrium operating point, resulting in relatively small spacing errors. Consequently, the nonlinear feedback function operates in its approximately linear region, where $\tanh(x)\approx x$, and the proposed controller behaves similarly to the linear CTH controller. Note however, that for large spacing errors the behavior of the controllers is substantially different (see Figure~\ref{Fig3}).

\begin{table} [t]
	\centering
	\caption{Cost values of safety, comfort, fuel consumption, and tracking for OpenACC data, the linear, and the nonlinear CTH-based strategies corresponding to the results shown in  Figs. \ref{Fig5} and \ref{Fig6}.}
	\scalebox{1}{\begin{tabular}{||c| c  c  c||} 
			\hline
			\backslashbox{Cost function}{Strategy}& OpenACC & Linear \eqref{CTH:linear}  &  Nonlinear \eqref{GrindEQ__3_1_}, \eqref{sigma:nonlinear} \\ 
			\hline\hline
			Safety & $32.22$ & $2.08$ &  $2.07$ \\ 
			\hline
			Comfort & $265.30$ & $7.33$  &  $5.97$ \\ 
			\hline
			Fuel Consumption & $1657.83$ & $1461.68$  &  $1459.46$ \\ 
			\hline
			Tracking & $674.43$ & $44.70$  &  $43.80$ \\ 
			\hline
	\end{tabular}}
	\label{table3}
\end{table}

\subsection{Robustness Simulations }

In this section we numerically study the robustness of the nonlinear controller \eqref{GrindEQ__3_1_} with respect to third-order vehicle dynamics and we then illustrate that the linear CTH-based controller \eqref{CTH:linear} may attain negative speeds for initial conditions selected in $D$ and simultaneously not belonging in $K$.
  
\begin{figure}[pos=t]  
	\begin{center}
		\includegraphics[width = 0.45\linewidth]{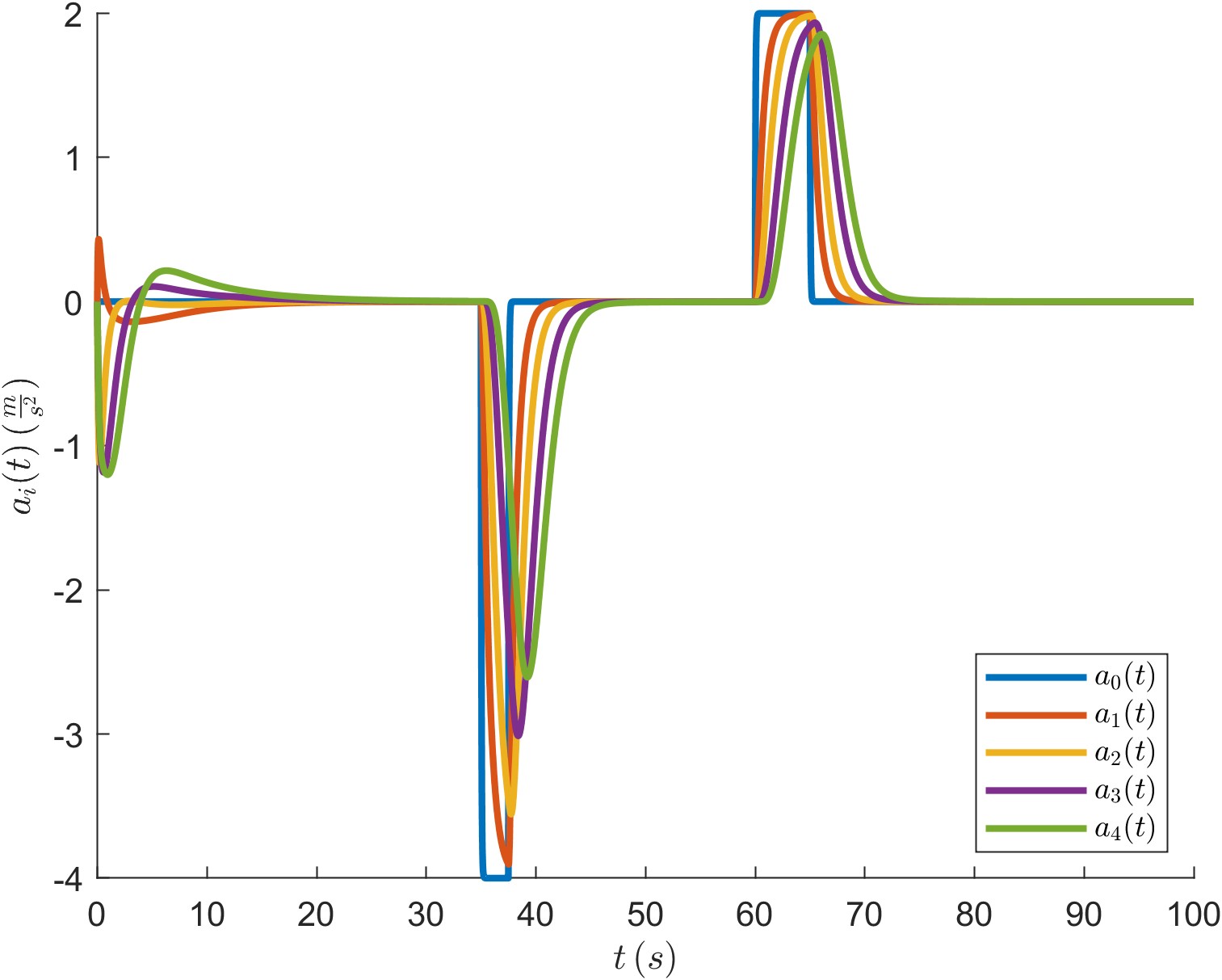}
		\includegraphics[width = 0.45\linewidth]{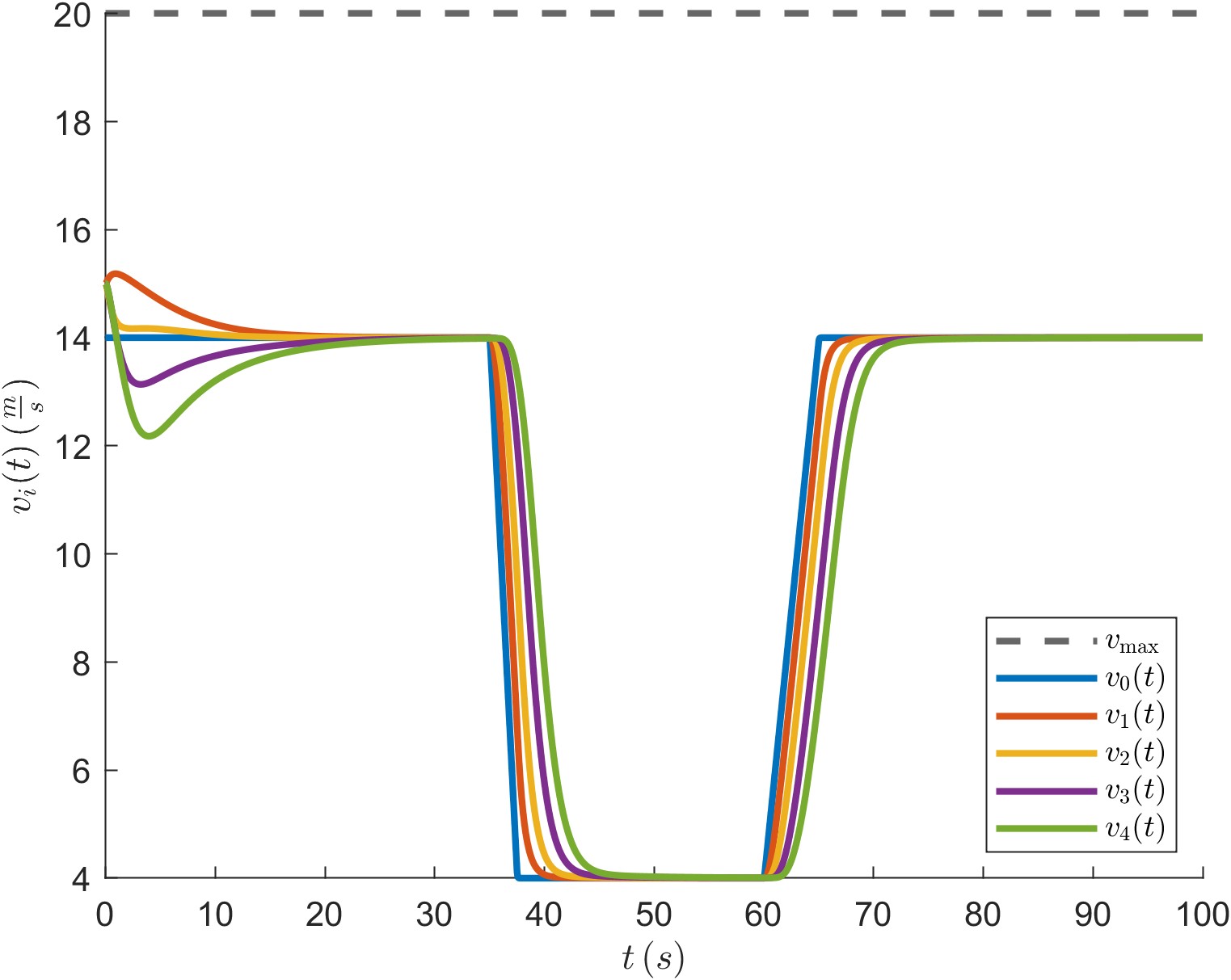}
		\includegraphics[width = 0.45\linewidth]{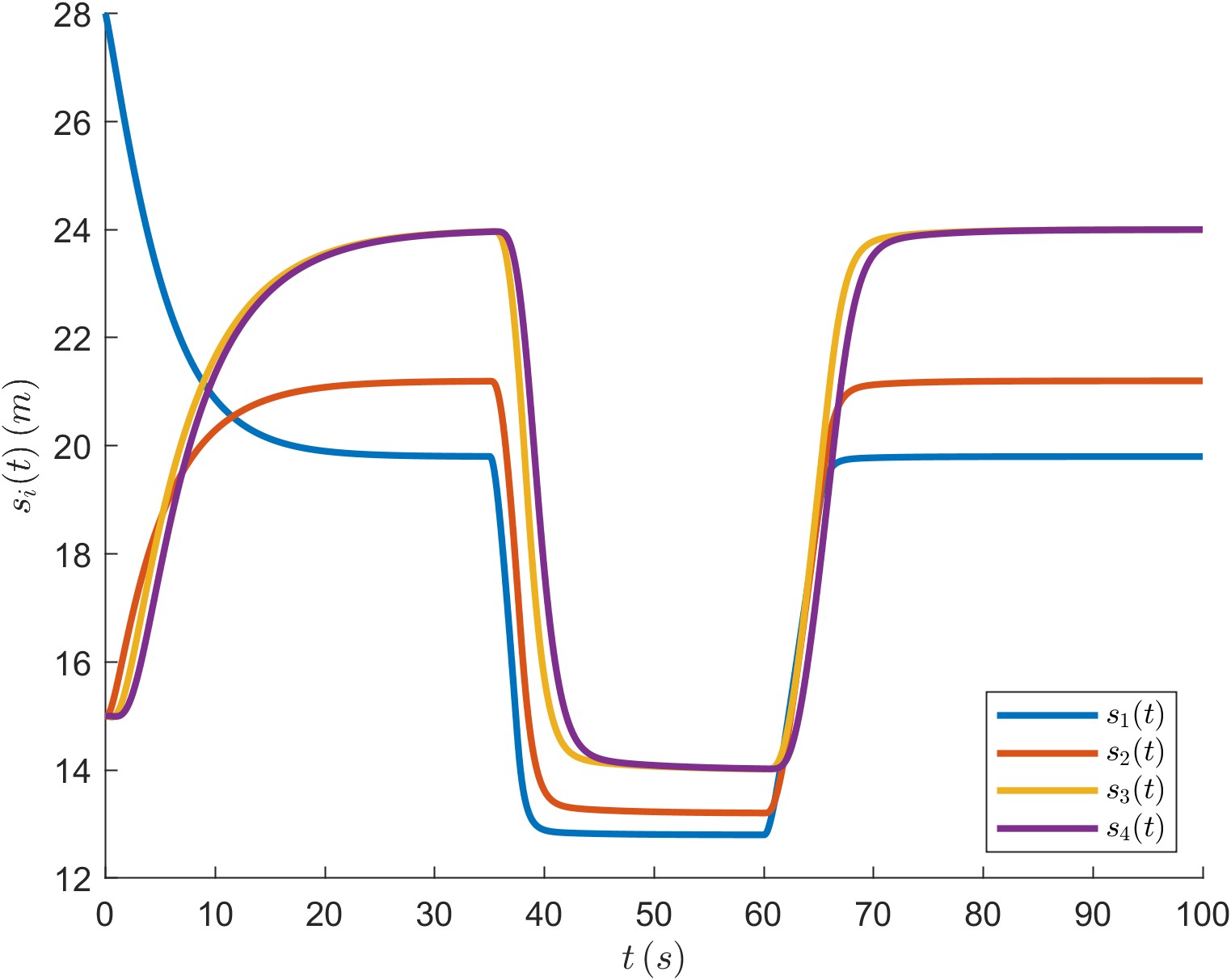}
		\caption{Acceleration (top left), speed (top right), and spacing (bottom) of four vehicles following a leader under the control law \eqref{GrindEQ__3_1_}.\vspace*{-\baselineskip}}\label{Fig7}
	\vspace*{-5pt}\end{center}\vspace*{-5pt}
\end{figure}
\subsubsection{Illustration of Robustness to Third-Order Vehicle Dynamics}
 We consider a platoon of  $n$ vehicles, each one modeled as follows 
\begin{equation}\label{third_order}
\begin{aligned}
	\dot{s}_i(t) =&\nobreakspace v_{i-1}(t) - v_i(t),\\	
	\dot{v}_i(t) =&\nobreakspace a_i (t),\\
	\dot{a}_i(t) =&\nobreakspace -\frac{1} {\tau_i} a_i (t)+\frac{1} {\tau_i}u_i(t),
\end{aligned}, \quad i=\ldots,n,
\end{equation}
where $a_i$ is vehicle acceleration and $\tau_i$ is lag. We consider a heterogeneous platoon of $n=4$ vehicles and a leader using the third-order model \eqref{third_order}, along with initial values outside of admissible invariance set $D$. Here, we apply directly the nonlinear controller \eqref{GrindEQ__3_1_} to the third-order model, and not the backstepping design of Remark \ref{remark:jerk}. We choose control gain $k_i=\frac{1.2}{h_i}$, $\psi(v)$ given by \eqref{psi:example},   $\sigma(H)$ given by \eqref{sigma:nonlinear}, for all~$i$, $v_{\rm max}=20$, and $\tilde{H}= \frac{v_{max}}{2}$. The initial conditions are $v_{i}(0) = 15 \left(\frac{m}{s} \right)$, $v_{0}(0) = 14 \left(\frac{m}{s} \right) $; $s_{i}(0) = 15 \nobreakspace (m)$, $s_{1}(0) = 28 \nobreakspace (m)$, and $a_{i}(0)= 0$ for all $i$. In the present scenario we consider a case in which $\tau_i$, $h_i$, and $r_i$ are set according to Table~\ref{table4}.  Figure~\ref{Fig7}, illustrates that the platoon remains string stable as there is no overshoot in the responses, due to deceleration or acceleration maneuvers performed by the leader.

In other words, the robustness of the nonlinear design is demonstrated: in the presence of the lag $\tau$, vehicle stability, safety, and string stability are preserved. In addition, we show that safety is maintained for initial conditions outside the invariant set $D$.

\begin{table}  [pos=h]
	\centering
	\caption{Parameters used for the simulation results in Figure~\ref{Fig7}.}
	\scalebox{1}{\begin{tabular}{||c| c c  c||} 
			\hline
			\backslashbox{Vehicle No.}{Parameters} & $\tau_{i}$ & $h_i$  & $r_i$ \\ 
			\hline\hline
			0&$0.05$ s & $-$ &  $-$\\ 
			\hline
			1&$0.05$ s & $0.7$ s &  $10$\\ 
			\hline
			2&$0.1$ s & $0.8$ s  & $10$\\
			\hline
			3&$0.15$ s & $1$ s  & $10$\\
			\hline
			4&$0.15$ s & $1$ s  & $10$\\
			\hline
	\end{tabular}}
	\label{table4}
\end{table}

\subsubsection{Comparison of Safety Robustness with Linear CTH Controllers}

We illustrate here the robustness of the safety properties of the platoon to initial conditions outside the constructed invariant sets under both, the linear CTH controller \eqref{CTH:linear}, with the invariant set given by the set $K$ defined by \eqref{GrindEQ__2_15_}, and the nonlinear CTH controller \eqref{GrindEQ__3_1_}, with the invariant set  given by the set $D$ defined in \eqref{GrindEQ__3_16_}. For illustration purposes we only consider $n=2$ identical vehicles in a scenario where all vehicles have low speeds and the leading vehicle comes to a stop, that is, $v^*=0$. We let the length of the vehicles be $\ell=4.5$,  gain $k=0.9$, time-headway headway  $h=1.2$,  $\beta_i=\frac{1}{h_i}\left(k_i-\frac{1}{h_i}\right)$, and $r=6$ which satisfy $k>1/h$, $r>\ell$. Moreover, we consider the initial conditions  $v_1(0)=1$, $v_2(0)=2$,   and $s_1(0)=\ell+0.1+hv_1(0)$, $s_2(0)=\ell+0.1+hv_2(0)$. Notice that the initial spacing $s_1(0)$ and $s_2(0)$ do not satisfy the constraint $s_i\ge r+hv_i$, $i=1,2$ of set $K$ (with $c=h$), while they satisfy the constraint $s_i>\ell_i+h_iv_i$ of the set $D$. Figure~\ref{Fig8} shows that, in this particular scenario, both vehicles attain negative speeds with the linear controller \eqref{CTH:linear}, while the speeds of the vehicles with the nonlinear CTH-based controller \eqref{GrindEQ__3_1_} are non-negative.
\begin{figure} [pos=t]
\centering
\includegraphics[width=0.5\linewidth]{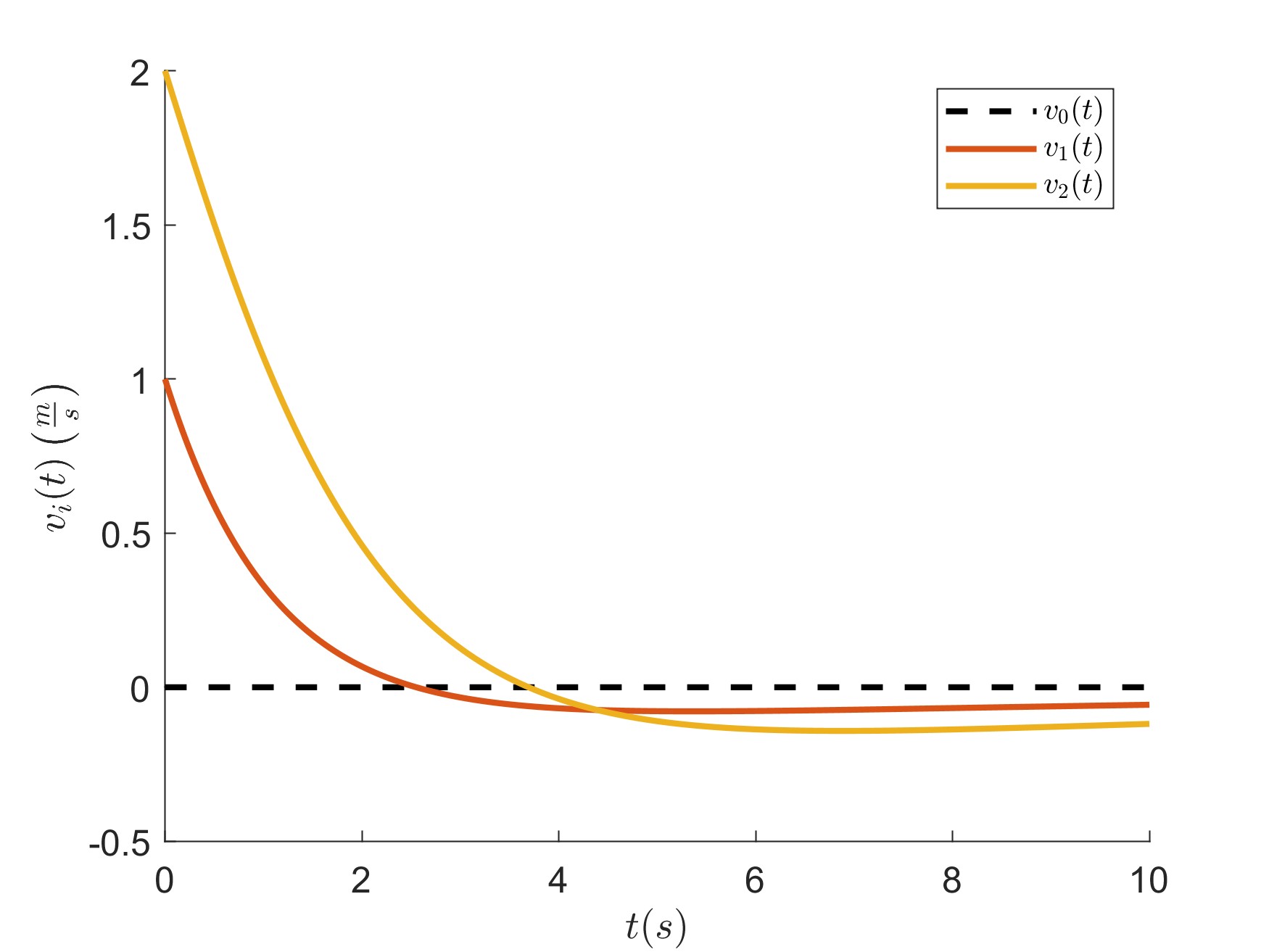}\hfil
\includegraphics[width=0.5\linewidth]{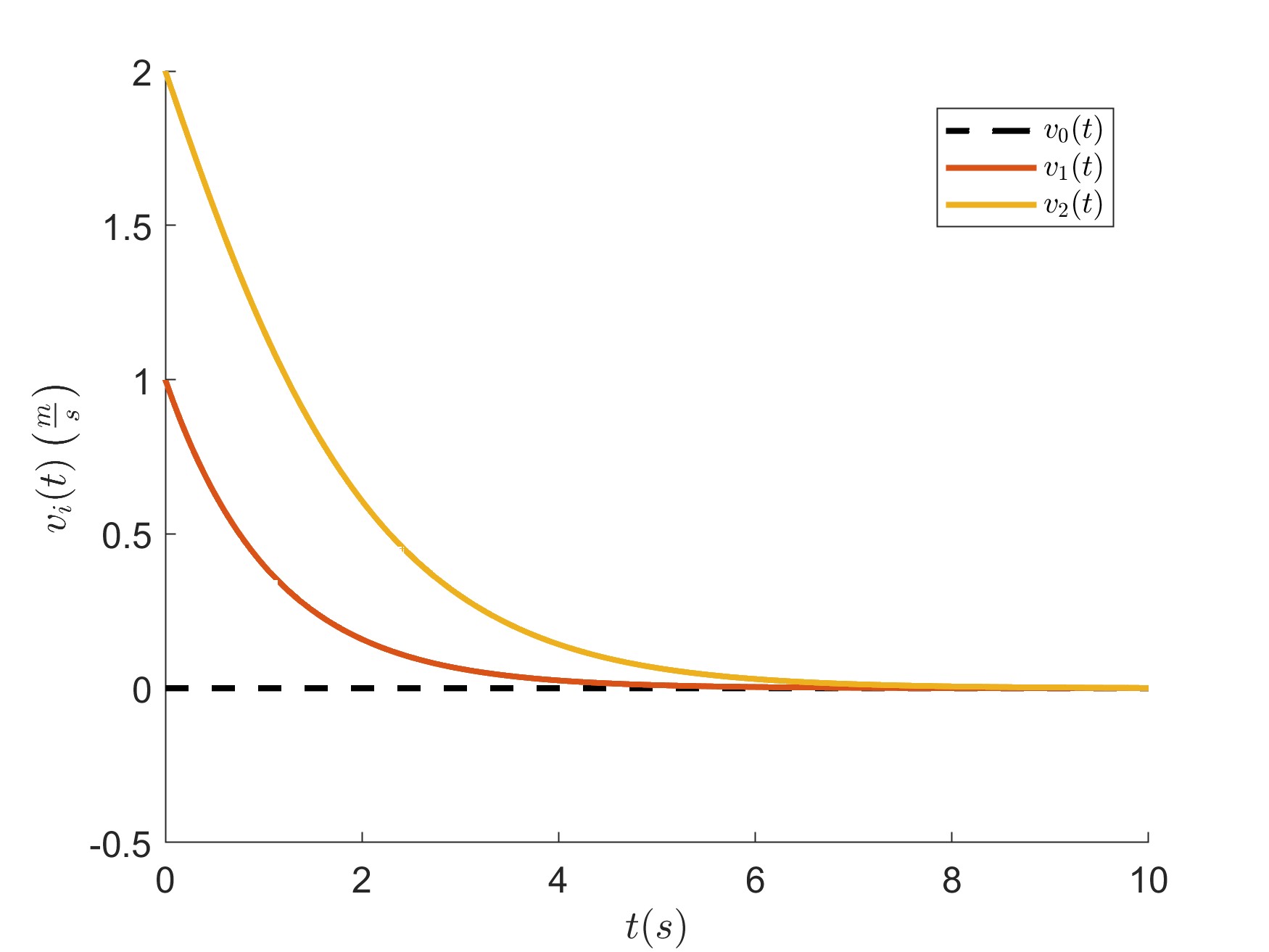}
\caption{Illustration of negative speeds (left) for the linear controller \eqref{CTH:linear} and non-negative speeds (right) with the nonlinear controller \eqref{GrindEQ__3_1_}.}
\label{Fig8}
\end{figure}
 
\pagebreak
\subsection{Comparison with Nonlinear Spacing Policy Controllers of Traffic Throughput}

We consider the controller from \cite{Karafyllis2023}:
\begin{equation} \label{GrindEQ__6_1_} 
u_{i} =F_{NL} \left(s_{i} ,v_{i-1} ,v_{i} \right)=(k-g(s_{i} ))\bar{G}(s_{i} )+g(s_{i} )v_{i-1} -kv_{i}  , \;\;i=1,...,n ,
\end{equation} 
where $\bar{G}(s):=\int _{\ell }^{s}g(l)dl $, $\ell >0$ is the length of the vehicle, $v_{0} \in C^{1} ({\mathbb R}_{+} )$ is the leading vehicle speed, satisfying $\dot{v}_{0} (t)\ge -kv_{0} (t)$, $v_{0} (t)\in (0,v_{\max } )$, for all $t\ge 0$ and $g:{\mathbb R}\to {\mathbb R}_{+} $ is a locally Lipschitz function that satisfies
\begin{align}
& 0<g(s)\le g_{\max }<k  , \textrm{ for }s>\lambda ,\label{GrindEQ__6_3_}\\
& v_{\max } :=\int _{\ell }^{+\infty }g(r)dr <k\left(\lambda -\ell \right) ,\label{GrindEQ__6_4_} \\
& g(s)=0, \textrm{ for }s\in [\ell ,\lambda ] ,\label{GrindEQ__6_5_}
\end{align}
where $\lambda >\ell $ defines a safety region in which the following vehicle starts decelerating at maximum rate (condition \eqref{GrindEQ__6_5_}). Under the above conditions, the nonlinear controller \eqref{GrindEQ__6_1_} guarantees that the following implication holds  $(s(0),v(0))\in \mathrm{{\mathcal D}}(v_{0} (0))\Rightarrow (s(t),v(t))\in \mathrm{{\mathcal D}}(v_{0} (t))$, $t\ge 0$ where $\mathcal{D}(v_0)$ is defined by \eqref{GrindEQ__3_14_}.  Moreover, the above conditions guarantee that a string of vehicles using \eqref{GrindEQ__6_1_} is $L^{2} $ and $L^{\infty } $ string stable, and, in addition, for given $v_{0} (t)\equiv v^{*} \in (0,v_{\max } )$, the equilibrium $(s^{*} 1_{n} ,v^{*} 1_{n} )$ is globally asymptotically stable, where the spacing equilibrium $s^{*} >\lambda $ is given by the solution of $v^{*} =\bar{G}(s^{*} )$, that is $s^{*}=\bar{G}^{-1}(v^*)$ (since $\bar{G}(s)$ is increasing for $s>\ell$).  

Similarly to \cite{Karafyllis2023}, we consider $g$ as follows 
\begin{equation} \label{GrindEQ__6_6_} 
g(s)=\left\{\begin{array}{cc} {0,} & {s\le \lambda }, \\ {L(s-\lambda )}, & {\lambda <s\le \lambda +g_{\max } L^{-1} }, \\ {g_{\max } }, & {\lambda +g_{\max } L^{-1} <s\le \gamma }, \\ {g_{\max } \exp (-\alpha (s-\gamma ))}, & {s>\gamma } \end{array}\right.  
\end{equation} 
where
\begin{equation}\label{NL:par}
\gamma =\lambda +\frac{v_{\max } }{g_{\max } } -\frac{1}{\alpha} +\frac{g_{\max } }{2L}, \quad {L>\frac{g_{\max }^{2} }{2\left(v_{\max } -g_{\max } \alpha ^{-1} \right)} },  \quad {\alpha >\frac{g_{\max } }{v_{\max } } },\quad \textrm{and} \;\; g_{\max}=\eta k, \;\eta\in(0,1).
\end{equation}

The parameters $L$ and $\alpha $ are introduced to shape the growth and decay of $g$. Larger values of $L$ and $\alpha $ allocate a larger fraction of the total area $\int_{a}^{\infty } g\left(s\right)\, ds=v_{{\rm max}} $ to the  region $(\lambda,\lambda+g_{\max}L^{-1}]$. Since $v^{*} =\bar{G}(s^{*} )$, increasing $L$ causes $\bar{G}\left(s\right)$ to accumulate area more rapidly after $s=\lambda $. Therefore, for a given equilibrium speed $v^{*} $, the corresponding spacing $s^{*} =\bar{G}^{-1} \left(v^{*} \right)$ is reduced. In simpler words, a steeper growth of $g$ yields a more spacing-efficient equilibrium relation. We selected $g_{\max}=\eta k$, $\eta \in(0,1)$ so that  the maximum nonlinear feedback increases proportionally with the controller gain, while ensuring that the condition \eqref{GrindEQ__6_3_} holds. Since $\bar{G}$ is increasing for $s>\ell $, we can obtain via \eqref{GrindEQ__6_6_} and definitions $g_{\max}=\eta k$, $\eta\in(0,1)$ and $\gamma$, above, that the equilibrium spacing $s^*$ as a function of the equilibrium speed $v^*$, parameterized by the controller gain $k$ is 
\begin{equation}
s^{*}(v^*;k)=\left\{\begin{array}{cc} {\lambda +\sqrt{2v^{*} L^{-1} } }, & {v^{*} \in \left(0,(\eta k)^{2} L^{-1} /2\right)} \\ 
{\lambda +\frac{\eta k }{2L} +\frac{v^{*} }{\eta k } }, & {v^{*} \in \left((\eta k)^{2} L^{-1} /2,\bar{G}(\gamma)\right)} \\ {\gamma -\frac{1}{\alpha } \ln \left(1-\frac{\alpha(v^{*} -\bar{G}(\gamma))}{\eta k } \right)}, & {v^{*} \in \left(\bar{G}(\gamma),v_{\max } \right)} \end{array}\right. 
\end{equation}
where, for notational convenience,
\begin{equation}
\bar{G}(\gamma;k)=\bar{G}(\gamma)=\eta k\left(\gamma -\lambda \right)-\frac{(\eta k)^{2} }{2L} =v_{\max}-\frac{\eta k}{\alpha}.
\end{equation}
Notice that for $v^{*} \in (\bar{G}(\gamma ;k),v_{\max } )$ it follows that $\alpha\frac{v^{*} -\bar{G}(\gamma;k )}{\eta k } \in (0,1)$. Thus, in all cases we have that $s^{*} (v^*;k) >\lambda $, for every $v^*\in (0,v_{\max})$ and $k>0$. 

It should be noted that conditions \eqref{GrindEQ__6_3_}-\eqref{GrindEQ__6_5_} imply that the controller \eqref{GrindEQ__6_1_} is bounded in $\mathcal{D}(v_0)$, that is, $|F_{NL}(s_i,v_{i-1},v)|\leq k v_{\max}$. Hence, increasing $k$ may result in higher acceleration profiles. A similar observation applies to the nonlinear CTH-based controller  \eqref{GrindEQ__3_1_}  with \eqref{sigma:nonlinear}. Indeed, in the invariant set $D$ given by \eqref{GrindEQ__3_6_}, we have that $|u_i|\leq\frac{1}{h}v_{\max}+ \beta \max_{v\in[0,v_{\max}]}\{\chi(v)\}\tilde{H}$. To facilitate a fair comparison between the two controllers, we select the parameters of the nonlinear CTH controller so that its worst-case acceleration bound coincides with that of the nonlinear controller \eqref{GrindEQ__6_1_}. Specifically, using $\beta=\frac{1}{h}\left(k-\frac{1}{h}\right)$, $k>1/h$, $\psi(v)=v/v_{\max}$, and choosing $\tilde{H}= 4h$, yields  $|u_i|\leq k v_{\max}$. Therefore, in both controllers increasing $k$ may lead to proportionally larger acceleration values. These estimates, however, are conservative and merely provide worst-case bounds. The actual acceleration profiles depend on the leader maneuver and on the initial spacing and relative speeds   conditions.

The safety threshold $\lambda$ of the controller $F_{NL}$ is chosen as a function of the gain $k$, $\lambda(k)=\ell+\frac{v_{\max}}{k}+0.1$. By having $\lambda$ to depend on $k$, we simultaneously guarantee that \eqref{GrindEQ__6_4_} holds, and the spacing equilibrium is reduced for increasing values of $k$ for the controller \eqref{GrindEQ__6_1_}. We use $\eta=0.95$, $\ell=4.5 (m)$, $r=7 (m)$, $v_{\max}=40 (m/s)$, and select  $L=\max(10^2,(\eta k)^2/(v_{\max}-\eta k\alpha^{-1}))$ and $\alpha = \max(10^2,\eta k/v_{\max})$ so that conditions \eqref{NL:par} are satisfied for every value of $k$. For the nonlinear CTH-based controller \eqref{GrindEQ__3_1_} we select $h(k) =1.1/k$ which satisfies $k>1/h $.  

Since the equilibrium spacing depends on the desired equilibrium speed, comparing the controllers at a single $v^*$ may not be representative. We therefore introduce the average equilibrium spacing over the admissible equilibrium-speed range, $v^*\in(0,v_{\max})$, as a single performance metric for each controller gain $k$:
\begin{equation}\label{mean_eq}
\bar{s}(k)=\frac{1}{v_{\max}}\int_0^{v_{\max}}s^*(v;k)dv.
\end{equation}
Since the comparison is performed over the equilibrium speed range $v^{*}\in(0,v_{\max})$, the average spacing of the CTH-based controller \eqref{GrindEQ__3_1_} is obtained from its equilibrium relation
$s ^* =r  + h(k) v^{*} $ (see \eqref{GrindEQ__5_1_}) yielding 
$\bar{s}(k)=r+\frac{v_{\max}}{2}h(k)$.

\begin{figure}[pos=t]
\centering
\includegraphics[width=0.47\linewidth]{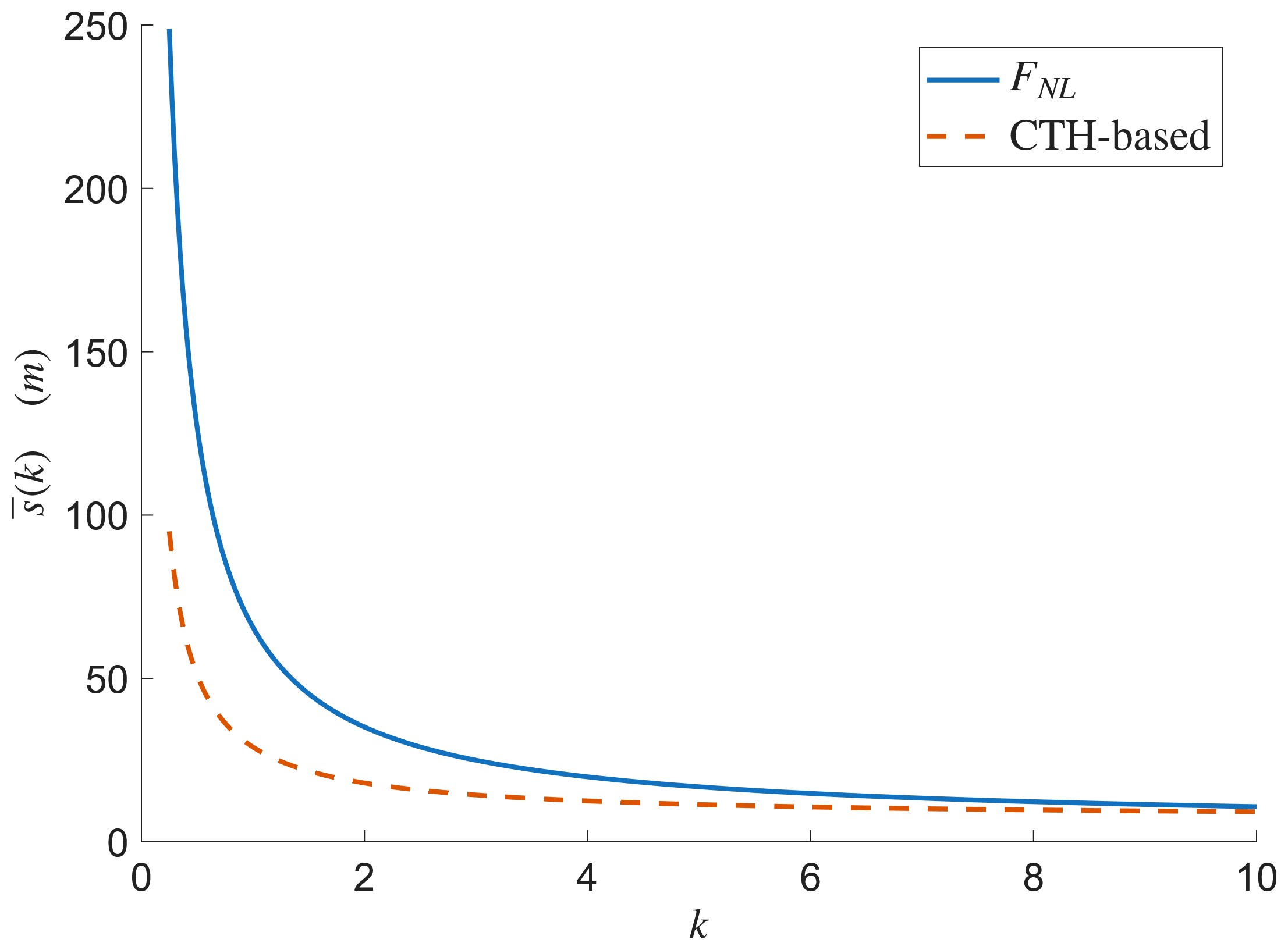}
\includegraphics[width=0.47\linewidth]{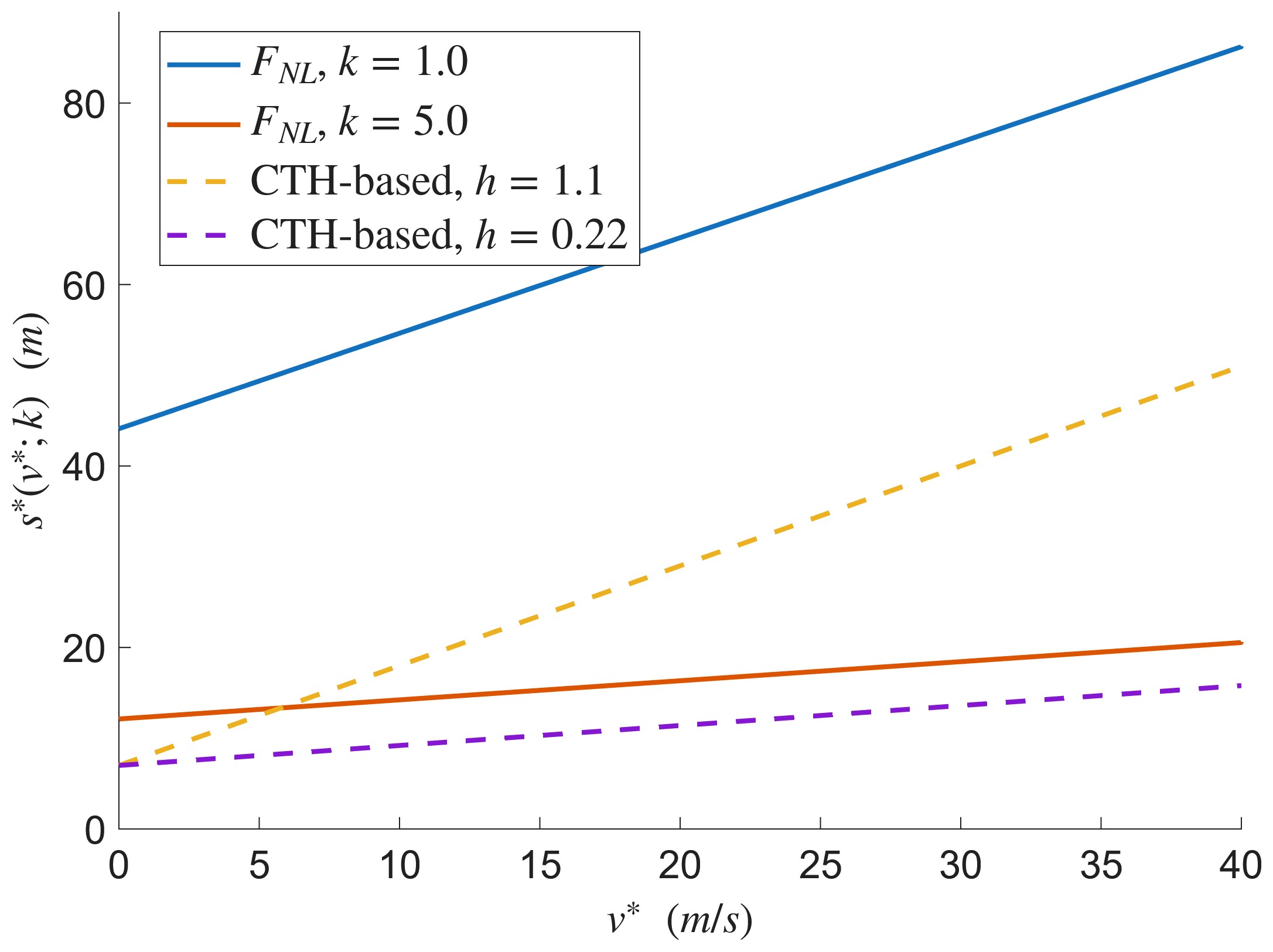}
\caption{Equilibrium spacing comparison between the nonlinear controller  \eqref{GrindEQ__6_1_} and the nonlinear CTH-based controller \eqref{GrindEQ__3_1_}. (Left) Average equilibrium spacing versus the controller gain $k$. (Right) Equilibrium spacing as a function of the desired speed for $k=1$, $k=5$,  $h=1.1$, and $h=0.22$.}
\label{fig:comp}
\end{figure} 
Figure \ref{fig:comp} (left) shows that both average spacings decrease as $k$ increases. For small and moderate values of $k$, the CTH-based controller \eqref{GrindEQ__3_1_} gives smaller average spacing. As $k$ increases, the gap decreases, and the two controllers have comparable spacing requirements. The  results also indicate that increasing $k$ improves the spacing efficiency of both controllers while reducing the performance gap between them. These results suggest that, over the considered range of gains, controller \eqref{GrindEQ__3_1_} can accommodate smaller equilibrium spacings on average and therefore may achieve higher equilibrium traffic throughput. Figure \ref{fig:comp}  (right) further illustrates the equilibrium-spacing relations for two representative gain values, with the CTH time headway selected according to $h(k) =1.1/k$. For $k=1$, the CTH-based controller requires substantially smaller spacing over the entire equilibrium-speed range. For $k=5$, the spacing requirements for both controllers become comparable.

The equilibrium-spacing comparison demonstrates that the proposed CTH-based controller generally requires smaller equilibrium spacing than the nonlinear controller \eqref{GrindEQ__6_1_}, suggesting improved equilibrium traffic throughput.  In contrast, the nonlinear controller \eqref{GrindEQ__6_1_} admits a larger positively invariant state space and provides the additional capability of shaping the equilibrium traffic fundamental diagram through the choice of the nonlinear spacing policy, see \cite{Karafyllis2023}.

\section{Proofs}\label{sec:proofs}

In this section we provide the proofs of all results of the paper.\\

\begin{proof}[Proof of Theorem \ref{thm:1}]  Let $\left(s(0),v(0)\right)\in K$. Since the closed-loop system \eqref{GrindEQ__2_1_}, \eqref{GrindEQ__2_2_}  is a linear system with continuous input $v_0\in C^{0}(\mathbb{R}_+;\mathbb{R}_+)$, the corresponding initial value problem admits a unique solution $(s(t),v(t))\in {\mathbb R}^{2n} $ that is defined for all $t\ge 0$. We will show that $(s(t),v(t))\in K$ for all $t\ge 0$. Define 
\begin{equation} \label{GrindEQ__7_1_} 
Q_{i} =s_{i} -r_i-c_i v_{i}  , i=1,\ldots,n 
\end{equation} 
Then, since, $\left(s(0),v(0)\right)\in K$, definition \eqref{GrindEQ__2_15_} and \eqref{GrindEQ__7_1_} imply that $v_{i} (0)\ge 0$ and $Q_{i} (0)\ge 0$ for $i=1,\ldots,n$. Using \eqref{GrindEQ__2_1_}, \eqref{GrindEQ__2_2_}, and \eqref{GrindEQ__7_1_} we obtain
\begin{equation} \label{GrindEQ__7_2_} 
\begin{aligned} 
&{\dot{v}_{i} =  k_{v,i} v_{i-1} + k_{p,i} Q_{i} -\left( k_{v,i}- k_{p,i} (c_i-h_i)\right)v_{i} } \\
&{\dot{Q}_{i} =\left(1-c_i k_{v,i}\right)v_{i-1} +\left(c_i k_{v,i}-1+c_i k_{p,i}(h_i-c_i)\right)v_{i} -c_i k_{p,i} Q_{i} } \end{aligned} ,\;\; i=1,\ldots,n 
\end{equation} 
Define along the solution,
\begin{equation} \label{GrindEQ__7_4_} 
W(t)=\frac{1}{2} \sum _{i=1}^{n}\left(\left(v_{i}^{-} (t)\right)^{2} +\left(Q_{i}^{-} (t)\right)^{2} \right)  , t\ge 0 .
\end{equation} 
Notice that $W\in C^{1}(\mathbb{R}_+;\mathbb{R}_+)$. In the following we omit the time-dependence for brevity. We get from \eqref{GrindEQ__7_2_}, \eqref{GrindEQ__7_4_}, and the identity $\frac{d}{dt}(x^-)^2=-x^-\dot{x}$, that
\begin{equation} \label{GrindEQ__7_5_} 
\begin{aligned}  \dot{W}&=-\sum _{i=1}^{n}v_{i}^{-} \left( k_{v,i} v_{i-1} +k_{p,i} Q_{i} -\left( k_{v,i}- k_{p,i} (c_i-h_i)\right)v_{i} \right) \\
&-\sum _{i=1}^{n}Q_{i}^{-} \left(\left(1-c_i k_{v,i}\right)v_{i-1} +\left(c_i k_{v,i}-1+c_ik_{p,i}(h_i-c_i)\right)v_{i} -c_i k_{p,i} Q_{i} \right)   .\end{aligned} 
\end{equation} 
Notice that assumption \eqref{GrindEQ__2_16_}  gives that $ k_{v,i}-k_{p,i}(c_i-h_i)\ge  k_{v,i}+ k_{p,i}\left(h_i-\frac{1}{ k_{v,i}}\right)\ge  k_{v,i}>0$. Using the inequalities $-x^{-} y\le x^{-} y^{-} $ and $x^{-} x=-\left(x^{-} \right)^{2} $ (that hold for all $x,y\in {\mathbb R}$)\footnote{Since  $x^-\ge0$ and $y^-\ge-y$ for all $x,y\in\mathbb{R}$ it follows that $-x^-y\leq x^-y^-$. For the second property, for $x\ge0$ we have $x^-=0$. Thus, $x^- x=0=-(x^-)^2$. For $x<0$, we have that $x^-=-x$. Thus, $x^-x=-xx=-x^2=-(-x)^2=-(x^-)^2$.}, we get from   \eqref{GrindEQ__7_5_} and the fact that $c_i,k_{p,i}>0$, that 
\begin{equation} \label{GrindEQ__7_6_} 
\begin{aligned}   
&\dot{W}\le  \sum _{i=1}^{n} k_{v,i}v_{i}^{-} v_{i-1}^{-}  + \sum _{i=1}^{n} k_{p,i}v_{i}^{-} Q_{i}^{-}  +\sum _{i=1}^{n}\left(1-c_i k_{v,i} \right)Q_{i}^{-} v_{i-1}^{-}   \\
&+\sum _{i=1}^{n}\left(c k_{v,i}-1+c_ik_{p,i}(h_i-c_i)\right)Q_{i}^{-} v_{i}^{-}  -\sum _{i=1}^{n}\left( k_{v,i}- k_{p,i} (c_i-h_i)\right)\left(v_{i}^{-} \right)^{2}  -\sum _{i=1}^{n}c_ik_{p,i}\left(Q_{i}^{-} \right)^{2}  \\ 
&\le  \sum _{i=1}^{n} k_{v,i}v_{i}^{-} v_{i-1}^{-}    +\sum _{i=1}^{n}a_iQ_{i}^{-} v_{i-1}^{-}  +\sum _{i=1}^{n}b_iQ_{i}^{-} v_{i}^{-}   \end{aligned} 
\end{equation} 
where $a_i = \left(1-c_i k_{v,i} \right)$, $b_i= \left(c_i k_{v,i}-1 + c_ik_{p,i}(h_i-c_i) +k_{p,i}\right)$, $i=1,\ldots,n$. Notice that due to conditions \eqref{GrindEQ__2_16_}, we have that $a_i>0$ and $b_i>0$ for $i=1,\ldots,n$.
From Young's inequality $ab\le \frac{1}{2} a^{2} +\frac{1}{2} b^{2} $, $a,b\in {\mathbb R}$, the fact that  $v_{0}^{-} =0$ (since $v_{0} (t)\ge 0$), and the notational convention $k_{v,n+1}=0$, $a_{n+1}=1$, we have
\begin{equation} \label{GrindEQ__7_7_} 
\begin{aligned} &{  \sum _{i=1}^{n}k_{v,i}v_{i}^{-} v_{i-1}^{-}  \le  \sum _{i=1}^{n}\frac{ k_{v,i}}{2}\left(v_{i}^{-} \right)^{2}  +\sum _{i=1}^{n}\frac{ k_{v,i}}{2} \left(v_{i-1}^{-} \right)^{2}  =\sum _{i=1}^{n}\frac{ k_{v,i}}{2} \left(v_{i}^{-} \right)^{2}  + \sum _{i=0}^{n-1}\frac{ k_{v,i+1}}{2}\left(v_{i}^{-} \right)^{2}  } \\ 
& \le  \sum _{i=1}^{n}\frac{ k_{v,i}}{2}\left(v_{i}^{-} \right)^{2}  + \sum _{i=1}^{n}\frac{ k_{v,i+1}}{2}\left(v_{i}^{-} \right)^{2}  ,\end{aligned} 
\end{equation} 
\begin{equation} \label{GrindEQ__7_8_} 
v_{i}^{-} Q_{i}^{-}  \le \frac{1}{2} \left(v_{i}^{-} \right)^{2}  + \frac{1}{2}\left(Q_{i}^{-} \right)^{2}   ,\,  i=1,\ldots,n,
\end{equation} 
and
\begin{equation} \label{GrindEQ__7_9_} 
\sum _{i=1}^{n}a_iQ_{i}^{-} v_{i-1}^{-}  \le \frac{1}{2} \sum _{i=1}^{n}a_i\left(v_{i-1}^{-} \right)^{2}  +\frac{1}{2}\sum _{i=1}^{n}a_i \left(Q_{i}^{-} \right)^{2}  \le \frac{1}{2} \sum _{i=1}^{n}a_i\left(Q_{i}^{-} \right)^{2}  +\frac{1}{2} \sum _{i=1}^{n}a_{i+1}\left(v_{i}^{-} \right)^{2}   .
\end{equation} 
Combining \eqref{GrindEQ__7_6_} with \eqref{GrindEQ__7_7_}, \eqref{GrindEQ__7_8_}, and \eqref{GrindEQ__7_9_} we get
\begin{equation} \label{GrindEQ__7_10_} 
\dot{W}\le \frac{q}{2} \sum _{i=1}^{n}\left(\left(v_{i}^{-} \right)^{2}  + \left(Q_{i}^{-} \right)^{2}\right)
\end{equation} 
where $q=\max_{i=1,\ldots,n}\{k_{v,i}+k_{v,i+1}+b_i+a_{i+1}, a_i+b_i\}>0$,
with $k_{v,n+1}=0$, $a_{n+1}=1$. Next, using \eqref{GrindEQ__7_10_} and definition \eqref{GrindEQ__7_4_} we get
\begin{equation} \label{GrindEQ__7_11_} 
\dot{W}(t)\le qW(t), t\ge 0 
\end{equation} 
Since $\left(s(0),v(0)\right)\in K$, it follows that $W(0)=0$ and due to Gronwall's inequality (see \cite{Khalil2002}), \eqref{GrindEQ__7_11_} implies that $W(t)=0$ for all $t\ge 0$. The latter and definitions \eqref{GrindEQ__7_1_} and \eqref{GrindEQ__7_4_} give that $v_{i} (t)\ge 0$ and $s_{i} (t)\ge r+cv_{i} (t)$  for $i=1,\ldots,n$ and $t\ge 0$. Thus $(s(t),v(t))\in K$ for all $t\ge 0$. The proof is complete. 
\end{proof}
 
\begin{proof}[Proof of Theorem \ref{thm:2}]
{Let $\left(s_{1,0} ,\ldots,s_{n,0} ,v_{1,0} ,\ldots,v_{n,0} \right)\in D$ be given. Since $\psi,\sigma \in C^{1}  $ and $v_0\in C^0$, by standard local existence and uniqueness theorems (see \cite{Khalil2002}), there} exists a unique maximal solution $\left(s\left(t\right),v\left(t\right)\right)\in \mathrm{{\mathbb R}}^{2n} $, of the initial value problem \eqref{GrindEQ__2_1_}, \eqref{GrindEQ__3_1_} with initial conditions $(s(0),v(0))=\left(s_{1,0} ,\ldots,s_{n,0} ,v_{1,0} ,\ldots,v_{n,0} \right)\in D$ defined on $[0,T_{\max } )$. If  $T_{\max} <+\infty $, then ${\mathop{\lim \sup }\limits_{t\to T_{\max }^{-} }} \left(\left|(s(t),v(t))\right|\right)=+\infty $. 

\noindent Define now
\begin{equation} \label{GrindEQ__7_12_} 
\tilde{\chi }(v)=\left\{\begin{array}{cc} {0} & {v<0} \\ {(v_{\max } -v)\psi (v)} & {0\le v\le v_{\max } } \\ {0} & {v>v_{\max } } \end{array}\right.  
\end{equation} 
Since $\chi \in C^{1} \left([0,v_{\max } ]\right)$ with $\chi (0)=\chi (v_{\max } )=0$, the continuous extension $\tilde{\chi }$, is globally Lipschitz and bounded, that is, there exists $C_{\chi } >0$ such that $\left|\tilde{\chi }(v)\right|\le C_{\chi } $, $v\in {\mathbb R}$. Consider also the system
\begin{equation} \label{GrindEQ__7_13_} 
\begin{aligned} &{\dot{s}_{i} =v_{i-1} -v_{i} } \\ 
&{\dot{v}_{i} =\frac{1}{h_{i} } \left(v_{i-1} -v_{i} \right)+\beta _{i} \tilde{\chi }(v_{i} ) \sigma(H_{i})}  \end{aligned}, \quad i=1,\ldots,n 
\end{equation} 
where $(s,v)\in {\mathbb R}^{2n} $ and the vector field of \eqref{GrindEQ__7_13_} is globally defined and locally Lipschitz on ${\mathbb R}^{2n} $. Since $\tilde{\chi }$ is bounded, definition \eqref{GrindEQ__3_2_} and property \eqref{s3} give for $i=1,\ldots,n$ that
\[\left|\dot{s}_{i} \right|\le \left|v_{i-1} \right|+\left|v_{i} \right|,\] 
\[
\begin{aligned} 
 \left|\dot{v}_{i} \right|&\le \frac{1}{h_{i} } \left(\left|v_{i-1} \right|+\left|v_{i} \right|\right)+\beta _{i} C_{\chi } \left|\sigma(H_i)\right|  \le \frac{1}{h_{i} } \left(\left|v_{i-1} \right|+\left|v_{i} \right|\right)+\beta _{i} C_{\chi } \left| H_i \right|  \\ 
&\le \frac{1}{h_{i} } \left(\left|v_{i-1} \right|+\left|v_{i} \right|\right)+\beta _{i} C_{\chi } \left(\left|s_{i} \right|+r_{i} +h_i\left|v_{i} \right|\right) {=\beta _{i} C_{\chi } \left|s_{i} \right|+\frac{1}{h_{i} } \left|v_{i-1} \right|+\left(\frac{1}{h_{i} } +\beta _{i} C_{\chi } \right)\left|v_{i} \right|+\beta _{i} C_{\chi } r_{i} }. \end{aligned}
\] 
Taking into account the previous inequalities and assumption \eqref{GrindEQ__3_4_} we can conclude that system \eqref{GrindEQ__7_13_} satisfies a linear growth condition. Thus by \cite{Khalil2002}, the solution $(s(t),v(t))\in {\mathbb R}^{2n} $ of the initial value problem \eqref{GrindEQ__7_13_} with $(s(0),v(0))=\left(s_{1,0} ,\ldots,s_{n,0} ,v_{1,0} ,\ldots,v_{n,0} \right)\in D$ exists for all $t\ge 0$. We now prove that this global solution satisfies $0\le v_{i} (t)\le v_{\max } $ for all $t\ge 0$ and $i=1,\ldots,n$.

Let $T>0$ (arbitrary), $i\in \left\{1,\ldots,n\right\}$, and assume that 
\begin{equation}\label{GrindEQ__7_14_}
0\le v_{i-1} (t)\le v_{\max }\textrm{ for }t\in [0,T].
\end{equation} 
 
Define 
\begin{equation} \label{GrindEQ__7_15_} 
E_i(t)=\frac{1}{2}\left(v_{i}^{-} (t)\right)^{2} + \frac{1}{2}\left((v_{i} (t)-v_{\max } )^{+} \right)^{2} , t\in [0,T] ,
\end{equation} 
which is $C^{1} $ and satisfies $E_i(t)=0$ if and only if $0\le v_{i} (t)\le v_{\max } $, $t\in [0,T]$. In the following we omit the time-dependence for brevity. Using \eqref{GrindEQ__7_13_} and \eqref{GrindEQ__7_15_} we have
\begin{equation} \label{GrindEQ__7_16_} 
\begin{aligned} &{\dot{E}_i=-v_{i}^{-} \dot{v}_{i} +\left(v_{i} -v_{\max } \right)^{+} \dot{v}_{i} } \\ 
&=\frac{1}{h_i} \left(-v_{i}^{-} +\left(v_{i} -v_{\max } \right)^{+} \right)\left(v_{i-1} -v_{i} \right)+\left(-v_{i}^{-} +\left(v_{i} -v_{\max } \right)^{+} \right)\beta _{i} \tilde{\chi }(v_{i} ) {\sigma(H_{i})} \end{aligned} 
\end{equation} 
Using the properties  $x^{-} x=-\left(x^{-} \right)^{2} $ and $x^{+} x=\left(x^{+} \right)^{2} $ (that hold for all $x\in {\mathbb R}$)\footnote{For $x\ge0$ we have $x^+=x$. Thus, $x^+x=x^2=(x^+)^2$. For $x<0$, we have that $x^+=0$. Thus, $x^+x=0 =(x^+)^2$.}  and by taking into account \eqref{GrindEQ__7_14_} we get
\begin{equation} \label{GrindEQ__7_17_} 
\begin{aligned} &{\left(-v_{i}^{-} +\left(v_{i} -v_{\max } \right)^{+} \right)\left(v_{i-1} -v_{i} \right)=-v_{i}^{-} v_{i-1} +v_{i}^{-} v_{i} +\left(v_{i} -v_{\max } \right)^{+} \left(v_{i-1} -v_{i} \right)} \\ &{\le -\left(v_{i}^{-} \right)^{2} -\left(v_{i} -v_{\max } \right)^{+} \left(v_{i} -v_{\max } \right) =-\left(v_{i}^{-} \right)^{2} -\left(\left(v_{i} -v_{\max } \right)^{+} \right)^{2} \le 0}. \end{aligned} 
\end{equation} 
Moreover, we have 
\begin{equation} \label{GrindEQ__7_18_} 
\begin{aligned} &{\left|\left(-v_{i}^{-} +\left(v_{i} -v_{\max } \right)^{+} \right)\tilde{\chi }(v_{i} )\right|\le \left(\left|-v_{i}^{-} \right|+\left|\left(v_{i} -v_{\max } \right)^{+} \right|\right)\left|\tilde{\chi }(v_{i} )\right|} \\ &{=\left(v_{i}^{-} +\left(v_{i} -v_{\max } \right)^{+} \right)\left|\tilde{\chi }(v_{i} )\right|}. \end{aligned} 
\end{equation} 
Since $\tilde{\chi }$ is globally Lipschitz, with $\tilde{\chi }(0)=\tilde{\chi }(v_{\max } )=0$, there exists a constant $L>0$ such that 
\begin{equation} \label{GrindEQ__7_19_} 
\left|\tilde{\chi }(v)\right|\le L\left|v\right|  \textrm{ and }   \left|\tilde{\chi }(v)\right|=\left|\tilde{\chi }(v)-\tilde{\chi }(v_{\max } )\right|\le L\left|v-v_{\max } \right| .
\end{equation} 
Hence, using the properties $\left|x\right|=x^{-} +x^{+} $, $x^{-} \left|x\right|=\left(x^{-} \right)^{2} $ and $x^{+} \left|x\right|=\left(x^{+} \right)^{2} $ (that hold for all $x\in\mathbb{R}$)\footnote{For $x\ge0$, we have that $x^-=0$, $x^+=x$, and $|x|=x$. Thus, $x^-+x^+=x=|x|$, $x^-|x|=0=(x^-)^2$, and $x^+|x|=x^2=(x^+)^2$. While for $x<0$ we have that $x^-=x$, $x^+=0$, and $|x|=-x$. Thus, $x^-+x^+=-x=|x|$, $x^-|x|=(-x)(-x)=(-x)^2=(x^-)^2$, and $x^+|x|=0=(x^+)^2$.}, we get
\begin{equation} \label{GrindEQ__7_20_} 
\begin{aligned} &{v_{i}^{-} \left|\tilde{\chi }(v_{i} )\right|\le Lv_{i}^{-} \left|v_{i} \right|=L\left(v_{i}^{-} \right)^{2} },  \\ &{\left(v_{i} -v_{\max } \right)^{+} \left|\tilde{\chi }(v_{i} )\right|\le L\left(\left(v_{i} -v_{\max } \right)^{+} \right)^{2} } .\end{aligned} 
\end{equation} 
{Combining \eqref{s3}, \eqref{GrindEQ__7_20_}, \eqref{GrindEQ__7_17_}, \eqref{GrindEQ__7_18_}, and \eqref{GrindEQ__7_16_} we get
\[ {\dot{E}_i\le \beta _{i} \left|-v_{i}^{-} +\left(v_{i} -v_{\max } \right)^{+} \right|\left|\tilde{\chi }(v_{i} )\right|\left|\sigma(H_{i})\right|} {\le \beta _{i} L\left|H_{i}  \right|\left(\left(v_{i}^{-} \right)^{2} +\left(\left(v_{i} -v_{\max } \right)^{+} \right)^{2} \right)}.\] }
Thus, definition \eqref{GrindEQ__7_15_} gives
\[\dot{E}_i(t)\le 2\beta _{i} L\left|H_{i} (t)\right|E_i(t),\;\; t\in [0,T].\] 
 Since $(s(0),v(0))\in D$, we have $v_{i} (0)\in [0,v_{\max } ]$, and consequently, $E_i(0)=0$. By Gronwall's inequality (see \cite{Khalil2002}) it follows that
\begin{equation} \label{GrindEQ__7_21_} 
E_i(t)=0,\, \, \, t\in [0,T]. 
\end{equation} 
Hence, $0\le v_{i} (t)\le v_{\max } $ for $t\in [0,T]$. Since assumption \eqref{GrindEQ__3_4_} holds, a simple induction argument yields $0\le v_{i} (t)\le v_{\max } $ for all $t\in [0,T]$ and $i=1,\ldots,n$. 
Finally, since $T>0$ was arbitrary, it follows that 
\begin{equation} \label{GrindEQ__7_22_} 
0\le v_{i} (t)\le v_{\max }  , \;\;\; t\ge 0 ,\; i=1,\ldots,n ,
\end{equation} 
for the solution of the extended system \eqref{GrindEQ__7_13_}.

{Using \eqref{GrindEQ__7_13_} and definition \eqref{GrindEQ__3_2_}, we get
\begin{equation}\label{dotH}
\dot{H}_i=\dot{s}_i-h_i \dot{v}_i =-h_i\beta_i\tilde{\chi}(v_i)\sigma(H_i)
\end{equation}
and therefore from \eqref{s2} and since $h_i$, $\beta_i$, $\tilde{\chi}\ge0$, we have that
\begin{equation}
H_i\dot{H}_i = -h_i\beta_i \tilde{\chi}H_i\sigma(H_i)\leq0
\end{equation} 
The latter implies that
\begin{equation}
\frac{d}{dt}\left(\frac{1}{2}H_i^2(t)\right)=H_i(t)\dot{H}_i(t)\leq0,
\end{equation}  
and thus,
\begin{equation}\label{Hi_L}
|H_i(t)|\leq |H_i(0)|,\; i=1,\ldots,n,\;\;t\ge0.
\end{equation}}

We next show that \eqref{GrindEQ__3_8_} holds. To that end, we show first that for any $H_i(0)\in \mathbb{R}$, $H_i(t)$ does not change sign. Suppose on the contrary, that $H_i$ changes sign. Then, there exists a time $t_0 >0$ such that $H_i(t_0)=0$. Since $\sigma(0)=0$ (property \eqref{s1}), we have that  $\dot{H}_i(t_0)=0$. Then, since $\tilde{\chi},\sigma\in C^1$, for the initial value problem \eqref{dotH} with $H(t_0)=0$ we get that $H_i(t)=0$ for all $t\ge t_0$. The latter and the semigroup property implies that $H_i$ cannot change sign for system \eqref{GrindEQ__7_13_} with $(s(0),v(0))\in D$. Thus, we get from \eqref{Hi_L} and the fact that $G_i(0)=s_i(0)-\ell_i-h_iv_i(0)>0$ and $H_i(0)=s_i(0)-r_i-h_iv_i(0)$, that \eqref{GrindEQ__3_8_} holds.

{Next, we prove that $G_{i} (t)>0$, $t\ge 0$, $i=1,\ldots,n$ for the extended system \eqref{GrindEQ__7_13_}. Since $G_i(t)=H_i(t)+r_i-\ell_i$ and since $G_i(0)>0$ (due to $(s(0),v(0))\in D$), we have that $H_i(0)>-(r_i-\ell_i)$. Moreover, due to \eqref{GrindEQ__3_8_} it holds that $H_i(t)>-(r_i-\ell_i)$ for all $t\ge0$ and $i=1,\ldots,n$. Thus, the fact that $G_i(t)=H_i(t)+r_i-\ell_i$, gives immediately that 
\begin{equation} \label{GrindEQ__7_24_} 
G_{i} \left(t\right)>0, \textrm{ for all } t>0 \textrm{ and }i=1,\ldots,n.
\end{equation} }
  
Inequalities \eqref{GrindEQ__7_22_} and \eqref{GrindEQ__7_24_} imply that  the solution $(s(t),v(t))\in {\mathbb R}^{2n} $ of the initial value problem \eqref{GrindEQ__7_13_} with $(s(0),v(0))=\left(s_{1,0} ,\ldots,s_{n,0} ,v_{1,0} ,\ldots,v_{n,0} \right)\in D$ exists for all $t\ge 0$ and satisfies $(s(t),v(t))\in D$ for all $t\ge 0$. Since $v_{i} (t)\in [0,v_{\max } ]$ for all $t\ge 0$, it follows from definition \eqref{GrindEQ__7_12_} that $\tilde{\chi }(v_{i} (t))=\chi (v_{i} (t))$ for all $t\ge 0$. Therefore, the solution of \eqref{GrindEQ__7_13_} is also a solution of the original system \eqref{GrindEQ__2_1_}, \eqref{GrindEQ__3_1_}. By uniqueness of solutions, the solutions coincide on the maximal interval of existence $[0,T_{\max } )$ of the solution of \eqref{GrindEQ__2_1_}, \eqref{GrindEQ__3_1_}. The latter shows that $T_{\max } =+\infty $. Hence the solution of the initial value problem \eqref{GrindEQ__2_1_}, \eqref{GrindEQ__3_1_} with initial condition $(s(0),v(0))=\left(s_{1,0} ,\ldots,s_{n,0} ,v_{1,0} ,\ldots,v_{n,0} \right)\in D$ satisfies $(s(t),v(t))\in D$ for all $t\ge 0$. Finally, inequality \eqref{GrindEQ__3_9_} is a consequence of \eqref{GrindEQ__7_22_}, \eqref{GrindEQ__3_8_} and the facts that $G_i=H_i+r_i-\ell_i$, and $s_{i} (t)=\ell _{i} +h_i v_{i} (t)+G_{i} (t)$,  $i=1,\ldots,n$, $t\ge 0$. The proof is complete.  
\end{proof}  

\begin{proof}[Proof of Theorem \ref{thm:3}]
By virtue of Theorem \ref{thm:2}, for any initial condition $\left(s(0),v(0)\right)\in D$ and any leader speed $v_0(t)$ satisfying \eqref{GrindEQ__3_4_}, the solution of \eqref{GrindEQ__2_1_}, \eqref{GrindEQ__3_1_} is defined for all $t\ge 0$ and satisfies $(s(t),v(t))\in D$ for all $t\ge 0$. Let $v^*\in[0,v_{\max}]$. For each $i=1,\ldots,n$, define 
\begin{equation} \label{GrindEQ__7_27_} 
p_{i} =v_{i} -v^{*}  .
\end{equation} 
Since $v^{*} \in [0,v_{\max } ]$ is constant, we get via \eqref{GrindEQ__7_27_} and \eqref{GrindEQ__3_1_} that 
\begin{equation} \label{GrindEQ__7_28_} 
\dot{p}_{i} =\dot{v}_{i} = \frac{1}{h_i}  \left(p_{i-1} -p_{i} \right)+\beta _{i} \chi \left(v_{i} \right) {\sigma(H_{i})}. 
\end{equation} 
Equation \eqref{GrindEQ__7_28_}, gives for each $i=1,\ldots,n$ and for all $t\ge 0$ that
\begin{equation} \label{GrindEQ__7_45_} 
\begin{aligned} &{p_{i} \left(t\right)=\exp \left(- {\frac{1}{h_i}}t \right)p_{i} \left(0\right)+ {\frac{1}{h_i}}\int _{0}^{t}\exp \left(- {\frac{1}{h_i}}(t-\tau ) \right)p_{i-1} \left(\tau \right)  d\tau  } \\
&{+\beta _{i} \int _{0}^{t}\exp \left(- {\frac{1}{h_i}}(t-\tau ) \right)\chi \left(v_{i} \left(\tau \right)\right) {\sigma(H_{i} \left(\tau \right))}d\tau  } .\end{aligned} 
\end{equation} 
We first show that \eqref{GrindEQ__4_1_} holds.  Define for $i=1,\ldots,n$ and $t\ge0$,
\begin{equation}\label{kappa_theta}
\begin{aligned}
\gamma_i(t)&=\frac{1}{h_i}\exp\left( -\frac{1}{h_i}t\right),\\
\theta_i(t)&=\exp\left( -\frac{1}{h_i}t\right).
\end{aligned}
\end{equation}
Then, for $p\in[1,\infty)$ and $t\ge0$ we have that
\begin{equation}\label{kappa:1}
\| \gamma_i\|_{[0,t],1}= \int_0^t \frac{1}{h_i}\exp\left(-\frac{1}{h_i}\tau\right)d\tau=1-\exp\left( -\frac{1}{h_i}t\right)\leq 1
\end{equation}
and
\begin{equation}\label{theta:p}
\| \theta_i\|_{[0,t],p}=\left(\int_0^t \exp\left(-\frac{p}{h_i}\tau\right)d\tau\right)^{1/p}=\left(\frac{h_i}{p}\left(1-\exp\left(-\frac{p}{h_i}t\right)\right)\right)^{1/p}\leq \left(\frac{h_i}{p}\right)^{1/p}.
\end{equation}
 {Recall that Theorem \ref{thm:2} gives that $|H_i(t)|\leq |H_i(0)|$ for $t\ge0$. Moreover, notice that condition \eqref{s4} and the fact that $\sigma\in C^1$ and $\sigma(0)=0$ imply that $\sigma'(0)>0$. Let $\kappa:\mathbb{R}_+\to(0,\infty)$ defined by
\begin{equation}\label{kappa:inf}
\kappa(R):=\left\{\begin{array}{ll}\inf_{0<|H|<R}\left(\frac{\sigma(H)}{H}\right), & R>0  \\ \sigma'(0), & R=0\end{array}\right.
\end{equation}
which is non-increasing with $\kappa(R)>0$ for $R\ge0$ (due to assumption \eqref{s4}).} Let $|H_{i}(0)|\ge0$ and define $R_i=|H_i(0)| $. Then, since $|H_i(t)|\leq |H_i(0)|\leq R_i$, definition of $\kappa$ above gives
\begin{equation}
H_i(t)\sigma(H_i(t))\ge \kappa(R_i) H_i^2(t), \;\; t\ge0.
\end{equation}
Thus, we get 
\begin{equation}
\frac{d}{dt}\left(\frac{1}{2}H_i^2\right)=-h_i\beta_i \chi(v_i)H_i\sigma(H_i)\leq -h_i \beta_i \kappa(R_i)\chi(v_i)H_i^2.
\end{equation}
Let $y_i(t):=H_i^2(t)$, $t\ge0$. Then, the previous inequality gives $\dot{y}_i(t)\leq -2h_i \beta_i \kappa(R_i)\chi(v_i(t))y_i(t)$ and from Gronwall's Inequality (see \cite{Khalil2002}) we get for $i=1,\ldots,n$ and $t\ge0$ that
\begin{equation}
H_i^2(t)\leq H_i^2(0) \exp\left(-2h_i \beta_i \kappa(R_i)\int_0^t\chi(v_i(\tau))d\tau\right).
\end{equation}
Thus,
\begin{equation}\label{Hi}
|H_i(t)|\leq |H_i(0)|\exp\left(-h_i \beta_i \kappa(R_i)\int_0^t\chi(v_i(\tau))d\tau\right).
\end{equation}
Define
\begin{equation} \label{GrindEQ__7_42_} 
Z_i(t):=\exp \left(- h_i \beta _{i}\kappa(R_i) \int _{0}^{t}\chi (v_{i} (s))ds \right), t\ge 0 ,
\end{equation} 
which gives
\begin{equation} \label{GrindEQ__7_43_} 
\dot{Z}_i(t)=-  h_i \beta _{i} \kappa(R_i) \chi (v_{i} (t))Z_i(t), t\ge 0 .
\end{equation} 
Using \eqref{s3}, \eqref{Hi}, \eqref{GrindEQ__7_43_}, and the fact that $\chi (v)\ge 0$, $v\in [0,v_{\max } ]$, $\beta _{i} >0$, $h_{i} >0$ we get for $t\ge 0$ that
\begin{equation} \label{GrindEQ__7_44_} 
\begin{aligned} &{\beta _{i} \int _{0}^{t}\chi (v_{i} (\tau ))\left|\sigma(H_{i} (\tau ))\right|d\tau \leq\beta _{i} \int _{0}^{t}\chi (v_{i} (\tau ))\left|H_{i} (\tau )\right|d\tau  =\beta _{i} \left|H_{i} (0)\right|\int _{0}^{t}\chi (v_{i} (\tau ))Z_i(\tau )d\tau  } \\
 &{=\frac{ \left|H_{i} (0)\right|}{h_{i} \kappa(R_i)} \int _{0}^{t}\left(-\frac{d}{d\tau } \left(Z_i(\tau )\right)\right)d\tau  =\frac{ \left|H_{i} (0)\right|}{h_{i} \kappa(R_i)} \left(1-Z_i(t)\right)\le \frac{ \left|H_{i} (0)\right|}{h_{i} \kappa(R_i)} } .\end{aligned} 
\end{equation} 
Using \eqref{GrindEQ__7_44_} and applying Young's convolution inequality: $\|f\ast g\|_{r}\leq\|f\|_p\|g\|_q$ for $p,q,r\ge1$ and $1+\frac{1}{r}=\frac{1}{q}+\frac{1}{p}$, $f\in L^p$, $g\in L^q$, (see for instance \cite[Theorem 4.33]{Brezis}) 
we obtain
\begin{equation}\label{conv}
\begin{aligned}
&\|\gamma_i \ast p_{i-1}\|_{[0,t],p}\leq \|\gamma_i\|_{[0,t],1}\|p_{i-1}\|_{[0,t],p}\leq\|p_{i-1}\|_{[0,t],p},\\
&\|\theta_i \ast (\beta_i\chi(v_i)\sigma(H_i))\|_{[0,t],p}\leq \|\theta_i\|_{[0,t],p}\|\beta_i\chi(v_i)\sigma(H_i)\|_{[0,t],1}\leq \left(\frac{h_i}{p}\right)^{1/p}\frac{|H_{i}(0)|}{h_i  \kappa(R_i)}.
\end{aligned}
\end{equation}
Thus \eqref{GrindEQ__7_45_}, and Young's convolution inequality gives
\begin{equation}
\|p_i\|_{[0,t],p}\leq \|\theta_i\|_{[0,t],p}|p_{i}(0)|+\|\gamma_i\|_{[0,t],1}\|p_{i-1}\|_{[0,t],p}+\|\theta_i\|_{[0,t],p}\|\beta_i \chi(v_i)\sigma(H_i)\|_{[0,t],1}
\end{equation}
and from \eqref{kappa:1}, \eqref{theta:p}, and \eqref{conv} we get estimate \eqref{GrindEQ__4_1_}.

Next, we show that estimate \eqref{GrindEQ__4_2_} holds. Using \eqref{s2}, \eqref{GrindEQ__7_44_} and \eqref{GrindEQ__7_45_} we get for $i=1,\ldots,n$ and $t\ge 0$
\begin{equation} \label{GrindEQ__7_46_} 
\begin{aligned} &{\left|p_{i} (t)\right|\le \exp \left(-  {\frac{1}{h_i}}t \right)\left|p_{i} (0)\right|+ {\frac{1}{h_i}} \int _{0}^{t}\exp \left(- {\frac{1}{h_i}}(t-\tau )  \right)\left|p_{i-1} (\tau )\right|d\tau  } \\ 
&{+\beta _{i} \int _{0}^{t}\exp \left(- {\frac{1}{h_i}}(t-\tau )  \right)\chi (v_{i} (\tau ))\left|H_{i} (\tau )\right|d\tau  } \\ 
&{\le \left|p_{i} (0)\right|+\sup_{0\le \tau \le t} \left(\left|p_{i-1} \left(\tau \right)\right|\right) {\frac{1}{h_i}}\int _{0}^{t}\exp  \left(-\frac{1}{h_i}(t-\tau )  \right)d\tau  +\beta _{i} \int _{0}^{t}\chi (v_{i} (\tau ))\left|H_{i} (\tau )\right|d\tau  } \\ 
&{\le \left|p_{i} (0)\right|+\sup_{0\le \tau \le t} \left(\left|p_{i-1} \left(\tau \right)\right|\right)\left(1-\exp \left(- {\frac{1}{h_i}}t  \right)\right)+\frac{ \left|H_{i} (0)\right|}{h_{i}\kappa(R_i) } } \\
&{\le \left|p_{i} (0)\right|+ \sup_{0\le \tau \le t} \left(\left|p_{i-1} \left(\tau \right)\right|\right)+\frac{  \left|H_{i} (0)\right|}{h_{i}\kappa(R_i) } } \end{aligned} 
\end{equation} 
Estimate \eqref{GrindEQ__4_2_} is a direct consequence of inequality \eqref{GrindEQ__7_46_} and definition \eqref{GrindEQ__7_27_}.

It remains to prove that estimates \eqref{spacing:i} and \eqref{spacing:1} hold. By definition of $\delta_i=s_i-r_i-h_iv^*$, $i=1,...,n$ and \eqref{GrindEQ__3_2_} we have that 
\begin{equation}\label{delta:i}
\delta_i=H_i+ h_i(v_i-v^*).
\end{equation}
Using estimates \eqref{GrindEQ__3_8_} and \eqref{GrindEQ__4_2_} we get that
\begin{equation}\label{s:1}
\begin{aligned}
\left\| \delta_i \right \|_{[0,t],\infty}&\leq |H_i(0)|+ h_i  \left\|v_i-v^*\right\|_{[0,t],\infty}\\
&\leq h_i \left\|v_{i-1}-v^*\right\|_{[0,t],\infty} + h_i |v_i(0)-v^*| +\left(1+\frac{1}{\kappa(R_i)}\right)|H_i(0)|.
\end{aligned}
\end{equation}
Notice also that \eqref{delta:i} gives
$v_{i-1}-v^*=\frac{1}{h_{i-1}}(\delta_{i-1}-H_{i-1})$.
Consequently, using \eqref{GrindEQ__3_8_} we have that
\begin{equation}\label{last}
\|v_{i-1}-v^*\|_{[0,t],\infty}\leq \frac{1}{h_{i-1}}\|\delta_{i-1}\|_{[0,t],\infty}+\frac{ |H_{i-1}(0)|}{h_{i-1}}.
\end{equation}  
Combining \eqref{last} with \eqref{s:1} gives estimate \eqref{spacing:i}. Moreover, \eqref{spacing:1} follows directly from definition of $\delta_1$ and estimate \eqref{GrindEQ__4_2_}  for $i=1$. Finally, \eqref{H} is a direct consequence of \eqref{Hi}. The proof is complete. \end{proof}

\begin{proof}[Proof of Lemma \ref{lem:1}]
Let \textit{$(s(0),v(0))\in \interior\left(D\right)\subset D$. }By Theorem \ref{thm:2}, the solution $(s(t),v(t))$ of \eqref{GrindEQ__2_1_}, \eqref{GrindEQ__3_1_} satisfies $0\le v_{i} (t)\le v_{\max } $ and inequality \eqref{GrindEQ__3_8_} for all $t\ge 0$ and $i=1,\ldots,n$. Since $\psi \in C^{1} \left([0,v_{\max } ]\right)$ and $\psi (0)=0$, there exists a constant $L>0$ such that $0\le \psi (v)\le Lv$, $v\in [0,v_{\max } ]$ and consequently 
\begin{equation}\label{GrindEQ__7_47_}
\chi (v)\le v_{\max } Lv, \textrm{ for }v\in [0,v_{\max } ].
\end{equation}
 
\noindent { Since $|\sigma(H)|\leq| H|$ and $\sigma$ is sign-preserving (properties \eqref{s3} and \eqref{s2}, respectively), $\chi (v)\ge 0$ and $\beta_i>0$, inequality \eqref{GrindEQ__3_8_} gives 
$\beta _{i} \chi (v_{i} (t))\sigma(H_{i} (t))\ge -\beta _{i} \chi (v_{i} (t))(r_i-\ell_i)$. } The previous inequality, \eqref{GrindEQ__7_47_}, \eqref{GrindEQ__2_1_}, and \eqref{GrindEQ__3_1_} give along the solution $(s(t),v(t))$ that
\begin{equation} \label{GrindEQ__7_48_} 
\dot{v}_{i} (t)\ge \frac{1}{h_{i} } v_{i-1} (t)-\left(\frac{1}{h_{i} } +\beta _{i} Lv_{\max } (r_i-\ell_i)\right)v_{i} (t) ,
\end{equation} 
for $i=1,\ldots,n$  and $t\ge 0$.

Assume now that there exists $\epsilon_{i-1}>0$ such that
\begin{equation}\label{GrindEQ__7_49_}
v_{i-1} (t)\ge \epsilon_{i-1} \textrm{ for }t\ge 0.
\end{equation}
 
\noindent Using assumption \eqref{GrindEQ__7_49_} and the fact that $v_{i} (t)\ge 0$ for $t\ge 0$, inequality \eqref{GrindEQ__7_48_} implies that
\begin{equation} \label{GrindEQ__7_50_} 
\begin{aligned} &{v_{i} (t)\ge \exp \left(-A_{i} t\right)v_{i} (0)+\frac{ \epsilon_{i-1} }{h_{i} A_{i} } \left(1-\exp \left(-A_{i} t\right)\right)} 
{\ge \min \left\{v_{i} (0),\frac{ \epsilon_{i-1} }{h_{i} A_{i} } \right\}:=\epsilon_{i} >0} ,\end{aligned} 
\end{equation} 
where $v_i(0)\in(0,v_{\max})$ and 
\begin{equation} \label{GrindEQ__7_51_} 
A_{i} :=\frac{1}{h_{i} } +\beta _{i} Lv_{\max }(r_i-\ell_i)>0 .
\end{equation} 
Notice now that due to assumption $0<\underline{v}\le v_{0} (t)\le \bar{v}<v_{\max } $, $t\ge 0$, inequality \eqref{GrindEQ__7_49_} holds for $i=1$ with $\epsilon_{0} =\underline{v}$. Thus, inequality \eqref{GrindEQ__7_50_}  holds for $i=1$. Using the same arguments successively we get that there exist constants $\epsilon_{i} >0$ such that $v_{i} (t)\ge \epsilon_{i} $ for $t\ge 0$ and consequently 
\begin{equation} \label{GrindEQ__7_52_} 
v_{i} (t)\ge \underline{\delta }:=\min \left\{\epsilon_{i} ,i=1,\ldots,n\right\}, t\ge 0 .
\end{equation} 
Since $A_{i} \ge 1/h_{i} $, it follows that $\frac{ \epsilon_{i-1} }{h_{i} A_{i} } \le \epsilon_{i-1} $ which in conjunction with definition $\epsilon_{i} :=\min \left\{v_{i} (0),\frac{ \epsilon_{i-1} }{h_{i} A_{i} } \right\}$ in \eqref{GrindEQ__7_50_} gives that $\epsilon_{i} \le \epsilon_{i-1} $. Since $\epsilon_{0} =\underline{v}$ it follows that $\bar{v}\ge \underline{\delta }={\mathop{\min }\limits_{1\le i\le n}} \left\{\epsilon_{i} \right\}=\epsilon_{n} >0$.

Next, define 
\begin{equation} \label{GrindEQ__7_53_} 
z_{i} :=v_{\max } -v_{i} ,\, \, \, i=0,1,\ldots,n 
\end{equation} 
which due to \eqref{GrindEQ__2_1_} and \eqref{GrindEQ__3_1_} gives
\begin{equation} \label{GrindEQ__7_54_} 
\dot{z}_{i} =\frac{1}{h_{i} } \left(z_{i-1} -z_{i} \right)-\beta _{i} \chi (v_{i} ) {\sigma(H_{i})}, i=1,\ldots,n 
\end{equation} 
Since $\psi $ is continuous with $\psi (v)>0$ for $v>0$, we can define $\psi _{\max } :={\mathop{\max }\limits_{v\in [0,v_{\max } ]}} \left(\psi (v)\right)>0$. Using definition \eqref{GrindEQ__3_3_} and \eqref{GrindEQ__7_53_} we get for $v_{i} \in [0,v_{\max } ]$ 
\begin{equation} \label{GrindEQ__7_55_} 
\chi (v_{i} )=(v_{\max } -v_{i} )\psi (v_{i} )=z_{i} \psi (v_{i} )\le \psi _{\max } z_{i}  .
\end{equation} 

Recall that along solutions it holds that $\max \left\{H_{i} (t),0\right\}\le \max \left\{H_{i} (0),0\right\}$ (inequality \eqref{GrindEQ__3_8_}). {Moreover, since $\sigma$ is continuous, the previous inequality and \eqref{s3} give that $\sigma(H_i(t))\leq \max\{H_i(0),0\}$.} Thus, using \eqref{GrindEQ__7_55_} and inequalities $\beta _{i} >0$ and $\chi (v)\ge 0$, we get
\begin{equation} \label{GrindEQ__7_56_} 
-\beta _{i} \chi (v_{i} (t)){\sigma(H_{i} (t))}\ge -\beta _{i} \psi _{\max } \max \left\{H_{i} (0),0\right\}z_{i} (t),\, \, \, \, \, t\ge 0 .
\end{equation} 
Using \eqref{GrindEQ__7_54_} and \eqref{GrindEQ__7_56_} we finally get that for $t\ge 0$ and for all $i=1,\ldots,n$, it holds that
\begin{equation} \label{GrindEQ__7_57_} 
\dot{z}_{i} (t)\ge \frac{1}{h_{i} } z_{i-1} (t)-\left(\frac{1}{h_{i} } +\beta _{i} \psi _{\max } \max \left\{H_{i} (0),0\right\}\right)z_{i} (t) .
\end{equation} 
Assume now that there exists $\bar{\delta}_{i-1}>0$ such that
\begin{equation} \label{GrindEQ__7_58_} 
z_{i-1} (t)\ge \bar{\delta }_{i-1} >0,\, \, \, t\ge 0 .
\end{equation} 
Then \eqref{GrindEQ__7_57_} implies that the following estimate holds for $t\ge 0$
\begin{equation} \label{GrindEQ__7_59_} 
\begin{aligned} &{z_{i} (t)\ge \exp \left(-B_{i} t\right)z_{i} (0)+\frac{ \bar{\delta }_{i-1} }{h_{i} } \left(1-\exp \left(-B_{i} t\right)\right)} \\ &{\ge \min \left\{z_{i} (0),\frac{ \bar{\delta }_{i-1} }{h_{i} B_{i} } \right\}=:\bar{\delta }_{i} >0}, \end{aligned} 
\end{equation} 
where 
\begin{equation} \label{GrindEQ__7_60_} 
B_{i} =\frac{1}{h_{i} } +\beta _{i} \psi _{\max } \max \left\{H_{i} (0),0\right\} .
\end{equation} 

Notice now that since by our assumption $0<\underline{v}\le v_{0} (t)\le \bar{v}<v_{\max } $ for all $t\ge 0$, \eqref{GrindEQ__7_58_} holds for $i=1$ with $\bar{\delta }_{0} =v_{\max } -\bar{v}$. Thus, it follows that \eqref{GrindEQ__7_59_} holds for $i=1$. Using the same arguments successively we get that there exist constants $\bar{\delta }_{i} >0$ such that $z_{i} (t)\ge \bar{\delta }_{i} $ for $t\ge 0$ and consequently $z_{i} (t)\ge \bar{\delta }:=\min \left\{\bar{\delta }_{i} ,i=1,\ldots,n\right\}$ or due to definition \eqref{GrindEQ__7_53_} that $v_{i} (t)\le v_{\max } -\bar{\delta }$. Notice that from $\bar{\delta }_{i} :=\min \left\{z_{i} (0),\frac{ \bar{\delta }_{i-1} }{h_{i} B_{i} } \right\}$ in \eqref{GrindEQ__7_59_} and definition \eqref{GrindEQ__7_60_}, we have that $\bar{\delta }_{i} \le \bar{\delta }_{i-1} $. Since $\bar{\delta }_{0} =v_{\max } -\bar{v}$, it follows that $\bar{\delta }:=\min \left\{\bar{\delta }_{i} ,i=1,\ldots,n\right\}\le v_{\max } -\bar{v}$. Therefore, the statement in the Lemma holds with $\delta =\min \left\{\underline{\delta },\bar{\delta }\right\}\le \min \left\{\underline{v},v_{\max } -\bar{v}\right\}$. The proof is complete.
\end{proof}

\begin{proof}[Proof of Proposition \ref{prop:2}]
Let \textit{$(s(0),v(0))\in\interior\left(D\right)\subset D$. }By Theorem \ref{thm:2}, the solution $(s(t),v(t))$ of \eqref{GrindEQ__2_1_}, \eqref{GrindEQ__3_1_} exists for $t\ge 0$ and satisfies $(s(t),v(t))\in D$. Let $0<\underline{v}\le v_{0} (t)\le \bar{v}<v_{\max } $. Then, from Lemma \ref{lem:1}, there exists $\delta >0$ such that 
\begin{equation}\label{GrindEQ__7_61_}
\delta \le v_{i} (t)\le v_{\max } -\delta, \textrm{ for all }t\ge 0 \textrm{ and }i=1,\ldots,n.
\end{equation}

\noindent By definition \eqref{GrindEQ__3_3_} and since $\chi $ is continuous with $\chi (v)>0$ for $v\in (0,v_{\max } )$, there exists $\underline{\chi }:={\mathop{\min }\limits_{v\in [\delta ,v_{\max } -\delta ]}} \left(\chi (v)\right)>0$. Hence, by \eqref{GrindEQ__7_61_} it follows that
\begin{equation} \label{GrindEQ__7_62_} 
\chi (v_{i} (t))\ge \underline{\chi }, t\ge 0 , i=1,\ldots,n .
\end{equation} 

Define now the deviation from the speed of the leading vehicle for each follower
\begin{equation} \label{GrindEQ__7_63_} 
\zeta_{i} (t):=v_{i} (t)-v_{0} (t),\, \, \, \zeta_{0}(t) \equiv 0, t\ge 0, i=1,\ldots,n ,
\end{equation} 
Using \eqref{GrindEQ__2_1_} and \eqref{GrindEQ__3_1_} we get
\[\dot{\zeta}_{i} =\frac{1}{h_{i} } \left(\zeta_{i-1} -\zeta_{i} \right)+\beta _{i} \chi (v_{i} ){\sigma(H_{i})} -\dot{v}_{0} , i=1,\ldots,n\] 
from which we have for $i=1,\ldots,n$ and $t\ge 0$
\begin{equation} \label{GrindEQ__7_64_} 
\begin{aligned}  &{\zeta_{i} (t)=\exp \left(-  {\frac{1}{h_i}}t \right)\zeta_{i} (0) +  {\frac{1}{h_i}}\int _{0}^{t}\exp \left(- {\frac{1}{h_i}}(t-\tau )  \right)\zeta_{i-1} (\tau )d\tau  } \\& {+\beta _{i} \int _{0}^{t}\exp \left(- {\frac{1}{h_i}}(t-\tau )  \right)\chi (v_{i} (\tau )){\sigma(H_{i} (\tau ))}d\tau  -\int _{0}^{t}\exp \left(- {\frac{1}{h_i}}(t-\tau )  \right)\dot{v}_{0} (\tau )d\tau  } \end{aligned} 
\end{equation} 
Using \eqref{GrindEQ__7_62_} and \eqref{Hi} withd $R_i=|H_i(0)| $, we get
\begin{equation} \label{GrindEQ__7_65_} 
\left|H_{i} (t)\right|\le \exp \left(-\left( \beta _{i} \underline{\chi }h_i \kappa(R_i)\right)t\right)\left|H_{i} (0)\right|, t\ge 0, i=1,\ldots,n .
\end{equation} 
where $\kappa$ is defined by \eqref{kappa:inf}. Then from \eqref{s2}, \eqref{GrindEQ__7_64_}, \eqref{GrindEQ__7_65_} and definition $\chi _{\max } :={\mathop{\max }\limits_{v\in [0,v_{\max } ]}} \left(\chi (v)\right)>0$, we get 
\begin{equation} \label{GrindEQ__7_66_} 
\begin{aligned} &{\left|\zeta_{i} (t)\right|\le \exp \left(- {\frac{1}{h_i}}t \right)\left|\zeta_{i} (0)\right|+  {\frac{1}{h_i}}\int _{0}^{t}\exp \left(- {\frac{1}{h_i}}(t-\tau ) \right)\left|\zeta_{i-1} (\tau )\right|d\tau  } \\ 
&{+\beta _{i} \int _{0}^{t}\exp \left(- {\frac{1}{h_i}}(t-\tau )  \right)\chi (v_{i} (\tau ))\left|H_{i} (\tau )\right|d\tau  +\int _{0}^{t}\exp \left(- {\frac{1}{h_i}}(t-\tau ) \right)\left|\dot{v}_{0} (\tau )\right|d\tau  } \\ 
&{\le \exp \left(- {\frac{1}{h_i}}t  \right)\left|\zeta_{i} (0)\right|+  {\frac{1}{h_i}} \int _{0}^{t}\exp \left(- {\frac{1}{h_i}}(t-\tau )  \right)\left|\zeta_{i-1} (\tau )\right|d\tau  } \\ 
&{+\beta _{i} \chi _{\max } \int _{0}^{t}\exp \left(- {\frac{1}{h_i}}(t-\tau ) \right)\left|H_{i} (\tau )\right|d\tau  +{\mathop{\sup }\limits_{0\le \tau \le t}} \left(\left|\dot{v}_{0} (\tau )\right|\right)\int _{0}^{t}\exp \left(- {\frac{1}{h_i}}(t-\tau )  \right)d\tau  } \end{aligned} 
\end{equation} 
Now let 
\begin{equation} \label{GrindEQ__7_67_} 
{0<\mu <{\mathop{\min }\limits_{i=1,\ldots,n}} \left\{\frac{1}{h_{i} } , {h_{i}} \beta _{i} \kappa(R_i)\underline{\chi }\right\} .}
\end{equation} 
It follows from \eqref{GrindEQ__7_65_} and \eqref{GrindEQ__7_67_} that
\begin{equation} \label{GrindEQ__7_68_} 
\begin{aligned} &{\int _{0}^{t}\exp \left(-\frac{1}{h_i}(t-\tau )  \right)\left|H_{i} (\tau )\right|d\tau  \le \left|H_{i} (0)\right|\int _{0}^{t}\exp \left(- {\frac{1}{h_i}}(t-\tau )  \right)\exp \left(-\left( \beta _{i} \underline{\chi } m_{R_i} {h_i}\right)\tau \right)d\tau  } \\
&{\le \left|H_{i} (0)\right|\int _{0}^{t}\exp \left(- {\frac{1}{h_i}}(t-\tau ) \right) \exp \left(-\mu \tau \right)d\tau } \\ &{=\left|H_{i} (0)\right|\frac{\exp \left(-\mu t\right)-\exp \left(- {\frac{1}{h_i}}t  \right)}{ {\frac{1}{h_i}} -\mu } \le \frac{1}{ {\frac{1}{h_i}} -\mu } \exp \left(-\mu t\right)\left|H_{i} (0)\right|} \end{aligned} 
\end{equation} 
where $\mu <1/h_{i} $  due to \eqref{GrindEQ__7_67_}. Combining \eqref{GrindEQ__7_65_}, \eqref{GrindEQ__7_66_}, \eqref{GrindEQ__7_67_}, and \eqref{GrindEQ__7_68_} we obtain for $i=1,\ldots,n$, $t\ge 0$
\begin{equation} \label{GrindEQ__7_69_} 
\begin{aligned} &{\left|\zeta_{i} (t)\right|+\left|H_{i} (t)\right|\le \exp \left(-\mu t\right)\left|\zeta_{i} (0)\right|+ {\frac{1}{h_i}}\int _{0}^{t}\exp \left(- {\frac{1}{h_i}}(t-\tau ) \right)\left|\zeta_{i-1} (\tau )\right|d\tau  } \\ 
&{+\left(1+\frac{\beta _{i} \chi _{\max } }{ {1/h_{i}} -\mu } \right)\exp \left(-\mu t\right)\left|H_{i} (0)\right|+ {h_{i}} {\mathop{\sup }\limits_{0\le \tau \le t}} \left(\left|\dot{v}_{0} (\tau )\right|\right)} \end{aligned} 
\end{equation} 
We now show that \eqref{GrindEQ__4_3_} holds by induction. For $i=1$, and since $q_{0} \equiv 0$, we get from \eqref{GrindEQ__7_65_} and \eqref{GrindEQ__7_69_} that 
\begin{equation} \label{GrindEQ__7_70_} 
\left|\zeta_{1} (t)\right|+\left|H_{1} (t)\right|\le \left(1+\frac{\beta _{1} \chi _{\max } }{  {1/h_{1}} -\mu } \right)\exp \left(-\mu t\right)\left(\left|\zeta_{1} (0)\right|+\left|H_{1} (0)\right|\right)+h_{1}  {\mathop{\sup }\limits_{0\le \tau \le t}} \left(\left|\dot{v}_{0} (\tau )\right|\right) .
\end{equation} 
Thus, \eqref{GrindEQ__4_3_} holds with $C_{1} =1+\frac{\beta _{1} \chi _{\max } }{1/h_{1} -\mu } $ and $\Gamma _{1} =h_{1}  $. Suppose now that \eqref{GrindEQ__4_3_} holds for $i-1\ge 1$. Then,
\begin{equation} \label{GrindEQ__7_71_} 
\left|\zeta_{i-1} (t)\right|\le \left|\zeta_{i-1} (t)\right|+\left|H_{i-1} (t)\right|\le C_{i-1} \exp \left(-\mu t\right)\sum _{j=1}^{i-1}\left(\left|\zeta_{j} (0)\right|+\left|H_{j} (0)\right|\right) +\Gamma _{i-1} \left\| \dot{v}_{0} \right\| _{\left[0,t\right], \infty}  .
\end{equation} 
Therefore, using \eqref{GrindEQ__7_69_} and \eqref{GrindEQ__7_71_}, we get
\[\begin{aligned} &{\left|\zeta_{i} (t)\right|\le \left(1+\frac{\beta _{i} \chi _{\max } }{ {1/h_{i}} -\mu } \right)\exp \left(-\mu t\right)\left(\left|\zeta_{i} (0)\right|+\left|H_{i} (0)\right|\right)+h_i{\mathop{\sup }\limits_{0\le \tau \le t}} \left(\left|\dot{v}_{0} (\tau )\right|\right)} \\ 
&+ {\frac{1}{h_i}}C_{i-1} \left(\sum _{j=1}^{i-1}\left(\left|\zeta_{j} (0)\right|+\left|H_{j} (0)\right|\right) \right)\int _{0}^{t}\exp \left(- {\frac{1}{h_i}}(t-\tau )  \right)\exp \left(-\mu \tau \right)d\tau \\
& +\left(\Gamma _{i-1} +  {h_{i}} \right){\mathop{\sup }\limits_{0\le \tau \le t}} \left(\left|\dot{v}_{0} (\tau )\right|\right) \\ 
&{\le \left(1+\frac{\beta _{i} \chi _{\max } }{ {1/h_{i}} -\mu } \right)\exp \left(-\mu t\right)\left(\left|\zeta_{i} (0)\right|+\left|H_{i} (0)\right|\right)+\exp \left(-\mu t\right)\frac{C_{i-1} }{1-\mu  {h_{i} }} \left(\sum _{j=1}^{i-1}\left(\left|\zeta_{j} (0)\right|+\left|H_{j} (0)\right|\right) \right)} \\ 
&{+\left( {h_{i} } +\Gamma _{i-1} \right){\mathop{\sup }\limits_{0\le \tau \le t}} \left(\left|\dot{v}_{0} (\tau )\right|\right)} \\ 
&{\le \max \left\{1+\frac{\beta _{i} \chi _{\max } }{ {1/h_{i}} -\mu } ,\frac{C_{i-1} }{1- \mu {h_{i} } } \right\}\exp \left(-\mu t\right)\sum _{j-1}^{i}\left(\left|\zeta_{j} (0)\right|+\left|H_{j} (0)\right|\right) +\left( {h_{i}} +\Gamma _{i-1} \right){\mathop{\sup }\limits_{0\le \tau \le t}} \left(\left|\dot{v}_{0} (\tau )\right|\right)} \end{aligned}\] 
Thus \eqref{GrindEQ__4_3_} holds with
\[\begin{aligned} &{C_{i} =\max \left\{1+\frac{\beta _{i} \chi _{\max } }{ {1/h_{i}} -\mu } ,\frac{C_{i-1} }{1- \mu {h_{i} } } \right\}} ,\\ &{\Gamma _{i} = {h_{i}} +\Gamma _{i-1} }. \end{aligned}\] 
 The proof is complete.
\end{proof}

\begin{proof}[Proof of Theorem \ref{thm:4}]
Define for $(s,v)\in D$
\begin{equation} \label{GrindEQ__7_72_} 
V(s,v)=\sum _{i=1}^{n}\lambda ^{i-1} \left(\frac{1}{2} H_{i}^{2} +\frac{\phi}{2} p_{i}^{2} \right)  
\end{equation} 
where $\lambda \in (0,1)$, $\phi>0$, $H_{i} =s_{i} -r_{i} -h_i v_{i} $, and $p_{i} =v_{i} -v^{*} $. Using \eqref{GrindEQ__2_1_}, \eqref{GrindEQ__3_1_}, we obtain 
\begin{equation} \label{GrindEQ__7_73_} 
\dot{H}_{i} =-  {h_{i}} \beta _{i} \chi \left(v_{i} \right){\sigma(H_{i} )}, 
\end{equation} 
\begin{equation} \label{GrindEQ__7_74_} 
\dot{p}_{i} =\dot{v}_{i} = {\frac{1}{h_i}} \left(p_{i-1} -p_{i} \right)+\beta _{i} \chi \left(v_{i} \right){\sigma(H_{i} )} . 
\end{equation} 
Then, using \eqref{GrindEQ__7_73_}, \eqref{GrindEQ__7_74_}, and \eqref{GrindEQ__7_72_} it follows that
\begin{equation} \label{GrindEQ__7_75_} 
\begin{aligned} &{\dot{V}(s,v)=\sum _{i=1}^{n}\lambda ^{i-1} \left(H_{i} \dot{H}_{i} +cp_{i} \dot{p}_{i} \right) } \\ 
&{=\sum _{i=1}^{n}\lambda ^{i-1} \left(-\left(  {h_{i}} \beta _{i} \chi (v_{i} )\right)H_{i}{\sigma(H_i)} + \phi  {\frac{1}{h_i}} p_{i} p_{i-1} - \phi  {\frac{1}{h_i}} p_{i}^{2} +\phi\beta _{i} \chi (v_{i} ){\sigma(H_i)} p_{i} \right) }.\end{aligned} 
\end{equation} 
{From properties \eqref{s2} and \eqref{s3} we have that
\begin{equation}\label{sigma:ineq}
\sigma(H)^2=|\sigma(H)|^2\leq|H||\sigma(H)|=H\sigma(H).
\end{equation}
Thus, using \eqref{sigma:ineq} and Young's inequality, $\left|ab\right|\le \frac{\varepsilon }{2} a^{2} +\frac{1}{2\varepsilon } b^{2} $ , $a,b\in {\mathbb R}$, $\varepsilon >0$, the following hold
\begin{equation} \label{GrindEQ__7_76_} 
\begin{aligned}  \phi\beta _{i} \chi (v_{i} )\left|\sigma(H_{i}) p_{i} \right|& \le \frac{h_{i} \beta _{i} \chi (v_{i} )}{2 } \sigma^{2}(H_{i}) +\frac{\phi^{2} \beta _{i} \chi (v_{i} )}{2h_{i} } p_{i}^{2}   \leq\frac{h_{i} \beta _{i} \chi (v_{i} )}{2 } H_{i}\sigma(H_i) +\frac{\phi^{2} \beta _{i} \chi (v_{i} )}{2h_{i} } p_{i}^{2} , \\
 \phi  {\frac{1}{h_i}} \left|p_{i} p_{i-1} \right|&\le  \frac{\phi}{2h_{i} } p_{i}^{2} + \frac{\phi}{2h_{i} } p_{i-1}^{2}  . \end{aligned} 
\end{equation} 
Combining \eqref{GrindEQ__7_75_} with \eqref{GrindEQ__7_76_} we get
\begin{equation} \label{GrindEQ__7_77_} 
\dot{V}(s,v)\le \sum _{i=1}^{n}\lambda ^{i-1} \left(-\frac{h_{i} \beta _{i} \chi (v_{i} )}{2 } H_{i}\sigma(H_i) - \phi  {\frac{1}{2h_{i}}} p_{i}^{2} +\frac{\phi^{2} \beta _{i}  \chi (v_{i} )}{2h_i} p_{i}^{2} \right) +\sum _{i=2}^{n}\lambda ^{i-1} \frac{\phi}{2h_{i} } p_{i-1}^{2}  . 
\end{equation} }
Since $\lambda \in (0,1)$, $c>0$, and $h_{i} >0$ we also have that
\begin{equation} \label{GrindEQ__7_78_} 
\sum _{i=2}^{n}\lambda ^{i-1} \frac{\phi}{2h_{i} } p_{i-1}^{2}  =\lambda \sum _{i=1}^{n-1}\lambda ^{i-1} \frac{\phi }{2h_{i+1} } p_{i}^{2}  \le \lambda \sum _{i=1}^{n}\lambda ^{i-1} \frac{\phi }{2h_{\min } } p_{i}^{2} ,  
\end{equation} 
where $h_{\min }:={\mathop{\min }\limits_{i=1,\ldots,n}} \left(h_{i} \right)>0$.  Thus, \eqref{GrindEQ__7_77_}, \eqref{GrindEQ__7_78_}, and definition $\chi _{\max } ={\mathop{\max }\limits_{v\in [0,v_{\max } ]}} \left(\chi (v)\right)$ give
\begin{equation} \label{GrindEQ__7_79_} 
 \dot{V}(s,v)\le -\sum _{i=1}^{n}\lambda ^{i-1} \frac{h_{i} \beta _{i} \chi (v_{i} )}{2 } H_{i}{\sigma(H_i)}  -\frac{\phi}{2} \sum _{i=1}^{n}\lambda ^{i-1} \left( \frac{1}{h_{i} }(1-\phi\beta _{i} \chi _{\max } ) -\frac{\lambda }{h_{\min } } \right) p_{i}^{2}  .
\end{equation} 
Let 
\[\begin{aligned}  &{\beta _{\max } ={\mathop{\max }\limits_{i=1,\ldots,n}} \left(\beta _{i} \right)}, \\ &{\phi=\frac{1}{2\beta _{\max } \chi _{\max } } }, \end{aligned}\] 
and
\[0<\lambda <{\mathop{\min }\limits_{i=1,\ldots,n}} \left\{\frac{ h_{\min } }{h_{i}   } \left(1-\phi\beta _{i} \chi _{\max } \right)\right\}.\] 
Notice that   $1-\phi\beta _{i} \chi _{\max } <1$. Thus $\lambda \in (0,1)$ . Define also
\begin{equation} \label{GrindEQ__7_80_} 
m_{\lambda } :={\mathop{\min }\limits_{i=1,\ldots,n}} \left\{ \frac{1}{h_{i} }(1-\phi\beta _{i} \chi _{\max }) -\lambda  {\frac{1}{h_{\min } }} \right\} .
\end{equation} 
Notice that due to \eqref{GrindEQ__7_80_}, it follows that $m_{\lambda } >0$. Combining \eqref{GrindEQ__7_80_} and \eqref{GrindEQ__7_79_} we get
\begin{equation} \label{GrindEQ__7_81_} 
\dot{V}(s,v)\le -\sum _{i=1}^{n}\lambda ^{i-1} \left(\frac{h_{i} \beta _{i} \chi (v_{i} )}{2 } H_{i}{\sigma(H_i)} +\frac{\phi m_{\lambda } }{2} p_{i}^{2} \right) . 
\end{equation} 
Since $v\in [0,v_{\max } ]$, it follows by \eqref{GrindEQ__3_3_} that $\chi (v)\ge 0$ for $v\in [0,v_{\max } ]$, and consequently, inequality \eqref{GrindEQ__7_81_} and {property \eqref{s2}} give
\begin{equation} \label{GrindEQ__7_82_} 
\dot{V}(s,v)\le 0 ,
\end{equation} 
for $(s,v)\in D$. Recall now that Theorem  \ref{thm:2} establishes that $v_{i} (t)\in [0,v_{\max } ]$, $i=1,\ldots,n$ and that \eqref{GrindEQ__3_9_}  holds for all  $t\ge 0$ and $i=1,\ldots,n$ for any $(s(0),v(0))\in D$. Let $(s(0),v(0))\in D$ and define the set
\begin{equation} \label{GrindEQ__7_83_} 
\Omega =\overline{\bigcup _{t\ge 0}\left\{s(t),v(t)\right\} } ,
\end{equation} 
which is bounded, due to Theorem \ref{thm:2}. Thus $\Omega $ is compact, positively invariant, and satisfies $\Omega \subset D$. Define also the set
\begin{equation} \label{GrindEQ__7_84_} 
Q:=\left\{(s,v)\in \Omega :\dot{V}(s,v)=0\right\}.
\end{equation} 
By \eqref{GrindEQ__7_81_}, every $(s,v)\in Q$ satisfies
\begin{equation} \label{GrindEQ__7_85_} 
\begin{aligned}  &{\chi (v_{i} )H_{i}{\sigma(H_i)} =0} \\ &{p_{i} =0} \end{aligned},\, \, \, i=1,\ldots,n .
\end{equation} 
In particular, we have
\begin{equation} \label{GrindEQ__7_86_} 
Q=\left\{(s,v)\in \Omega :p_{i} =0,\chi (v_{i} )H_{i}{\sigma(H_i)} =0,i=1,\ldots,n\right\}.
\end{equation} 
We can now distinguish the following cases:\\

\noindent \textbf{Case 1:} $v^{*} \in (0,v_{\max } )$. First recall that in this case, we have that the equilibrium configuration is given by the singleton $E_{v^{*} } $ defined by \eqref{GrindEQ__5_1_}. Since $p_{i} =0$ on $Q$ we have that $v_{i} =v^{*} $ for all $i=1,\ldots,n$. Since $v^{*} \in (0,v_{\max }) $, definition \eqref{GrindEQ__3_3_} gives $\chi (v^{*} )>0$. Hence, by { property \eqref{s1}} and \eqref{GrindEQ__7_85_} we have that $H_{i} =0$ for $i=1,\ldots,n$. Therefore,   by LaSalle's Invariance principle, the state $(s(t),v(t))$ approaches the largest invariant set in $Q$ as $t\to +\infty $, that is $\Omega \cap \left\{w^{*} \right\}$. Hence, 
\begin{equation} \label{GrindEQ__7_87_} 
\left. \begin{array}{c} {{\mathop{\lim }\limits_{t\to +\infty }} \left(v_{i} (t)\right)=v^{*} } \\ {{\mathop{\lim }\limits_{t\to +\infty }} \left(H_{i} (t)\right)=0} \end{array}\right\}\Rightarrow \left. \begin{array}{c} {{\mathop{\lim }\limits_{t\to +\infty }} \left(v_{i} (t)\right)=v^{*} } \\ {{\mathop{\lim }\limits_{t\to +\infty }} \left(s_{i} (t)\right)=s_{i}^{*} =r_{i} +  {h_{i}} v^{*} } \end{array}\right\} 
\end{equation} 
Notice that by definition \eqref{GrindEQ__7_72_}, $V(s,v)>0$  for $(s,v)\in D$ with $(s,v)\ne w^{*} $ and $V(w^{*} )=0$. Continuity of $V$, \eqref{GrindEQ__7_82_} and \eqref{GrindEQ__7_87_} show global asymptotic stability of $w^{*} $.\\

\noindent \textbf{Case 2:} $v^{*} =0$. Recall that in this case, the equilibrium configuration is given by the set $E_{0} $ defined by \eqref{GrindEQ__5_2_}. Condition $p_{i} =0$ in \eqref{GrindEQ__7_85_} implies that $v_{i} =0$ for all $i=1,\ldots,n$. By definition \eqref{GrindEQ__3_3_} we have $\chi (0)=0$, and therefore the condition $\chi (v_{i} )H_{i}{\sigma(H_i)} =0$ is satisfied automatically. Hence,
\[Q=\left\{(s,v)\in \Omega :v_{i} =0,i=1,\ldots,n\right\}.\] 
Since $\Omega \subset D$, every $(s,v)\in Q$ also satisfies 
\[s_{i} >\ell _{i} ,i=1,\ldots,n.\] 
{By LaSalle's Invariance Principle, the state $(s(t),v(t))$ approaches the largest invariant set in  $Q$ as $t\to +\infty $, that is $\Omega \cap E_{0} $. The latter implies that ${\mathop{\lim }\limits_{t\to +\infty }} \left(v_{i} (t)\right)=0$ for $i=1,\ldots,n$. Recall now that $\dot{H}_i=-h_i\beta_i\chi(v_i)\sigma(H_i)$, which together with \eqref{s2} and the fact that $\chi(v)\ge0$, implies monotonicity of $H_i$. Indeed, the latter follows from the facts that $H_i$ preserves sign, and that if $H_i\ge0$, then $\dot{H}_i\leq0$, while if $H_i<0$, then $\dot{H}_i\ge 0$. Finally, monotonicity and boundedness of $H_i$ (recall \eqref{H}), imply that ${\mathop{\lim }\limits_{t\to +\infty }} \left(H_{i} (t)\right)$ exists, that is, for every $i=1,\ldots,n$ there exist constants $\bar{H}_{i} \in {\mathbb R}$  such  that ${\mathop{\lim }\limits_{t\to +\infty }} \left(H_{i} (t)\right)=\bar{H}_{i} $.  By \eqref{GrindEQ__3_8_} it follows that $\bar{H}_{i} \in \left(\ell _{i} -r_{i} ,\max \left\{H_{i} (0),0\right\}\right]$. Using the fact that $s_{i} (t)=r_{i} +  {h_{i}} v_{i} (t)+H_{i} (t)$ and that ${\mathop{\lim }\limits_{t\to +\infty }} \left(v_{i} (t)\right)=0$, it follows that ${\mathop{\lim }\limits_{t\to +\infty }} \left(s_{i} (t)\right)=r_{i} +\bar{H}_{i} $.}\\

\noindent \textbf{Case 3:} $v^{*} =v_{\max } $. Recall that in this case, the equilibrium configuration is given by the closed and unbounded set $E_{v_{\max } } $defined by \eqref{GrindEQ__5_3_}. Condition $p_{i} =0$ in \eqref{GrindEQ__7_85_} implies that $v_{i} =v_{\max } $ for all $i=1,\ldots,n$. By definition \eqref{GrindEQ__3_3_} we have $\chi (v_{\max } )=0$, and therefore the condition $\chi (v_{i} )H_{i}^{2} =0$ is satisfied automatically. Hence,
\[Q=\left\{(s,v)\in \Omega :v_{i} =v_{\max } ,i=1,\ldots,n\right\}.\] 
Since $\Omega \subset D$, every  $(s,v)\in Q$ also satisfies 
\[s_{i} >\ell _{i} +h_{i} v_{\max } ,i=1,\ldots,n.\] 
By LaSalle's Invariance Principle, the state $(s(t),v(t))$ approaches the largest invariant set in  $Q$ as $t\to +\infty $, that is $\Omega \cap E_{v_{\max } } $. The latter implies that ${\mathop{\lim }\limits_{t\to +\infty }} \left(v_{i} (t)\right)=v_{\max } $. Similar to the previous case, monotonicity and boundedness of $H_{i} (t)$ in \eqref{H}, it follows that there exist constants $\tilde{H}_{i} \in \left(\ell _{i} -r_{i} ,\max \left\{H_{i} (0),0\right\}\right]$, such that  ${\mathop{\lim }\limits_{t\to +\infty }} \left(H_{i} (t)\right)=\tilde{H}_{i} $. Using the fact that $s_{i} (t)=r_{i} +h_iv_{i} (t)+H_{i} (t)$ and that ${\mathop{\lim }\limits_{t\to +\infty }} \left(v_{i} (t)\right)=v_{\max } $, it follows that ${\mathop{\lim }\limits_{t\to +\infty }} \left(s_{i} (t)\right)=r_{i} +h_{i} v_{\max } +\tilde{H}_{i} $. The proof is complete.  
\end{proof}

\begin{proof}[Proof of Proposition \ref{prop:3}] Let $\left(\tilde{s}_{0} ,\tilde{v}_{0} \right)\in \interior\left(D\right)$. Since $\interior (D)\subset D$, it follows by Theorem \ref{thm:2} that the solution of \eqref{GrindEQ__2_1_}, \eqref{GrindEQ__3_1_} with $\left(s(0),v(0)\right)=\left(\tilde{s}_{0} ,\tilde{v}_{0} \right)$ is defined for all times and satisfies \eqref{GrindEQ__3_7_}. Since $\left(s(0),v(0)\right)\in \interior (D)$ and $v_{0} (t)\equiv v^{*} \in (0,v_{\max } )$ it follows by Proposition \ref{prop:2} (with $v^{*} =\bar{v}=\underline{v}$ and $\dot{v}_{0} \equiv 0$) that for every $i=1,\ldots,n$, the following estimates hold
\begin{equation} \label{GrindEQ__7_89_} 
\left|v_{i} (t)-v^{*} \right|+\left|H_{i} (t)\right|\le C_{0} \exp \left(-\mu t\right)\sum _{j=1}^{i}\left(\left|v_{j} (0)-v^{*} \right|+\left|H_{j} (0)\right|\right) , t\ge 0 
\end{equation} 
where $C_{0} ={\mathop{\max }\limits_{i=1,\ldots,n}} \left(C_{i} \right)$ and $\mu >0$. Using definitions \eqref{GrindEQ__3_2_} and \eqref{GrindEQ__5_1_} we have that
\begin{equation} \label{GrindEQ__7_90_} 
s_{i} -s_{i}^{*} =s_{i} -r_{i} -h_i v^{*} =H_{i} +h_i (v_{i} -v^{*} ) .
\end{equation} 
Thus, for each $i=1,\ldots,n$ and $t\ge 0$ it holds 
\begin{equation} \label{GrindEQ__7_91_} 
\left|s_{i} (t)-s_{i}^{*} \right|+\left|v_{i} (t)-v^{*} \right|\le \left(1+ {h}_{\max } \right)\left(\left|H_{i} (t)\right|+\left|v_{i} (t)-v^{*} \right|\right) ,
\end{equation} 
where $ {h}_{\max } ={\mathop{\max }\limits_{i=1,\ldots,n}} \left(  {h_{i}} \right)$. Equation \eqref{GrindEQ__7_90_} also gives for $i=1,\ldots,n$ that
\begin{equation} \label{GrindEQ__7_92_} 
\begin{aligned} & {\left|v_{i} (0)-v^{*} \right|+\left|H_{i} (0)\right|\le \left(1+ {h_{i}} \right)\left(\left|v_{i} (0)-v^{*} \right|+\left|s_{i} (0)-s_{i}^{*} \right|\right)} \\ &{\le \left(1+ {h}_{\max } \right)\left(\left|v_{i} (0)-v^{*} \right|+\left|s_{i} (0)-s_{i}^{*} \right|\right)} \end{aligned} 
\end{equation} 
 Combining, \eqref{GrindEQ__7_89_}, \eqref{GrindEQ__7_91_}, \eqref{GrindEQ__7_92_}, and \eqref{GrindEQ__5_1_} give for $t\ge 0$ that 
\begin{equation} \label{GrindEQ__7_93_} 
\begin{aligned} &{\sum _{i=1}^{n}\left(\left|s_{i} (t)-s_{i}^{*} \right|+\left|v_{i} (t)-v^{*} \right|\right)\le \left(1+ {h}_{\max } \right)\exp \left(-\mu t\right)nC_{0}  \sum _{j=1}^{n}\left(\left|v_{j} (0)-v^{*} \right|+\left|H_{j} (0)\right|\right) } \\ 
&{\le \left(1+ {h}_{\max } \right)\exp \left(-\mu t\right)nC_{0} \sum _{i=1}^{n}\left(\left|v_{i} (0)-v^{*} \right|+\left|H_{i} (0)\right|\right) } \\ 
&{\le \left(1+ {h}_{\max } \right)^{2} \exp \left(-\mu t\right)nC_{0} \sum _{i=1}^{n}\left(\left|v_{i} (0)-v^{*} \right|+\left|s_{i} (0)-s_{i}^{*} \right|\right) } \end{aligned} 
\end{equation} 
Thus, estimate \eqref{GrindEQ__5_8_} is a direct consequence of \eqref{GrindEQ__5_1_}, \eqref{GrindEQ__7_93_}, and equivalence of norms. The proof is complete.
 \end{proof}

To prove Theorem \ref{thm:5} we need the following lemma whose proof is similar to that of Lemma \ref{lem:1} and is omitted.
\begin{lemma}\label{lem:2}
{Assume that $v_{0} (t)\equiv v^{*} \in (0,v_{\max } )$. Let $\bar{\delta }>0$ and $\bar{G}\ge  \max_{i=1,\ldots,n}(r_{i} +h_{i} v^{*}) $ with $v_{\max } -\bar{\delta }>0$. Moreover, suppose that $v_{i} (0)\in [\bar{\delta },v_{\max } -\bar{\delta }]$ and $0<G_{i} (0)\le \bar{G}$ for all $i=1,\ldots,n$. Then, there exists constants $\Delta _{1} :=\Delta _{1} (\bar{\delta },\bar{G})>0$, $\Delta _{2} :=\Delta _{2} (\bar{\delta },\bar{G})>0$,  such that  $\Delta _{1} \le v_{i} (t)\le v_{\max } -\Delta _{2} $  for all $t\ge 0$ and $i=1,\ldots,n$.}
\end{lemma}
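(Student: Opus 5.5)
The plan follows the proof of Lemma \ref{lem:1}, now with constant leader $v_0\equiv v^*$ and with every constant in the lower and upper speed bounds controlled by $\bar{\delta}$ and $\bar{G}$ alone. The initial data only enter those recursions through $v_i(0)$, $v_{\max}-v_i(0)$ and the bounds on $H_i(0)$, and all of these are uniformly controlled on the given set. Theorem \ref{thm:2} already gives existence for all $t\ge0$, $v_i(t)\in[0,v_{\max}]$, and estimate \eqref{GrindEQ__3_8_}. From $G_i(0)\le\bar{G}$ and $G_i=H_i+r_i-\ell_i$ we get $H_i(0)\le \bar{G}-(r_i-\ell_i)\le\bar{G}$, and \eqref{GrindEQ__3_8_} gives $H_i(t)\ge-(r_i-\ell_i)$.

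\emph{Lower bound.} As in \eqref{GrindEQ__7_47_}, I use $\chi(v)\le v_{\max}Lv$, together with $\sigma(H_i(t))\ge-(r_i-\ell_i)$ (this follows from \eqref{s2}, \eqref{s3} and $H_i\ge-(r_i-\ell_i)$). This yields the differential inequality \eqref{GrindEQ__7_48_} with the constant $A_i$ of \eqref{GrindEQ__7_51_}, which does not depend on the initial data. I then run the induction with $\epsilon_0=v^*$ and $\epsilon_i=\min\{\bar{\delta},\epsilon_{i-1}/(h_iA_i)\}$, using $v_i(0)\ge\bar\delta$. This gives $v_i(t)\ge\Delta_1:=\epsilon_n$, which depends only on $\bar\delta$, $v^*$ and the system parameters.

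\emph{Upper bound.} I set $z_i=v_{\max}-v_i$ as in \eqref{GrindEQ__7_53_}. I use \eqref{GrindEQ__7_55_} and the bound $\sigma(H_i(t))\le\max\{H_i(0),0\}\le\bar{G}$ to get \eqref{GrindEQ__7_57_} with $B_i$ replaced by the uniform constant $\bar{B}_i:=1/h_i+\beta_i\psi_{\max}\bar{G}$. Starting from $\bar{\delta}_0=v_{\max}-v^*$ and using $z_i(0)\ge\bar\delta$, the recursion $\bar{\delta}_i=\min\{\bar\delta,\bar{\delta}_{i-1}/(h_i\bar{B}_i)\}$ gives $v_i(t)\le v_{\max}-\Delta_2$ with $\Delta_2:=\bar\delta_n$.

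\emph{Main obstacle.} The only delicate point is uniformity. In Lemma \ref{lem:1}, $B_i$ depended on $H_i(0)$, and the minima involved $v_i(0)$ and $z_i(0)$. The hypothesis $G_i(0)\le\bar{G}$ is exactly what caps $\max\{H_i(0),0\}$, and $v_i(0)\in[\bar\delta,v_{\max}-\bar\delta]$ replaces the dependence on the individual initial speeds. I would also check the comparison step (solving the linear differential inequality with a nonnegative forcing term). Note that interior membership is not needed here, since $v_i(0)$ is bounded away from the speed boundaries by assumption.
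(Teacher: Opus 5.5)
Your proposal is correct and is exactly the adaptation of the proof of Lemma~\ref{lem:1} that the paper intends; the paper omits this proof and only says it is similar to that of Lemma~\ref{lem:1}. In your argument, $A_i$ is already independent of the initial data, the cap $\max\{H_i(0),0\}\le \bar{G}$ makes $B_i$ uniform, and replacing $v_i(0)$ and $z_i(0)$ by $\bar{\delta}$ in the recursive minima yields constants $\Delta_1,\Delta_2$ that depend only on $\bar{\delta}$, $\bar{G}$, $v^*$ and the system parameters.
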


\begin{proof}[Proof of Theorem \ref{thm:5}] Let $v^*\in(0,v_{\max})$ and $\bar{\delta },\bar{G},>0$ (arbitrary) satisfying $v_{\max } -\bar{\delta }>0$, $\bar{G}\ge r_{i} + {h_{i}} v^{*} $  and consider the set $W_{\delta,\bar{G}}$ in \eqref{GrindEQ__5_9_}.  Let also $\left(s(0),v(0)\right)\in W_{\bar{\delta},\bar{G}}$. Since $W_{\delta,\bar{G}}\subset D$, Theorem \ref{thm:2} establishes that the solution of \eqref{GrindEQ__2_1_}, \eqref{GrindEQ__3_1_} exists for all times and satisfies $(s(t),v(t))\in D$ for all $t\ge 0$. By virtue of Lemma \ref{lem:2} above, we have that there exist constants $\Delta _{1} :=\Delta _{1} (\bar{\delta },\bar{G})>0$, $\Delta _{2} :=\Delta _{2} (\bar{\delta },\bar{G})>0$ such that
\begin{equation} \label{GrindEQ__7_105_} 
\Delta _{1} \le v_{i} (t)\le v_{\max } -\Delta _{2}, \;\; t\ge 0, i=1,\ldots,n .
\end{equation} 
From definition \eqref{GrindEQ__3_3_} and \eqref{GrindEQ__7_105_} we get for all $i=1,\ldots,n$ that 
\begin{equation} \label{GrindEQ__7_106_} 
\chi (v_{i} (t))=\left(v_{\max } -v_{i} (t)\right)\psi (v_{i} (t))\ge \Delta _{2} {\mathop{\min }\limits_{v\in [\Delta _{1} ,v_{\max } -\Delta _{2} ]}} \psi (v)=\underline{\chi }>0,t\ge 0 .
\end{equation} 
Define for $(s,v)\in D$
\begin{equation} \label{GrindEQ__7_107_} 
V(s,v)=\sum _{i=1}^{n}\lambda ^{i-1} \left(\frac{1}{2} H_{i}^{2} +\frac{\phi}{2} p_{i}^{2} \right) , 
\end{equation} 
where $\lambda \in (0,1)$, $\phi>0$, $H_{i} =s_{i} -r_i- {h_{i}}v_{i} $, and $p_{i} =v_{i} -v^{*} $. Using \eqref{GrindEQ__2_1_}, \eqref{GrindEQ__3_1_}, and the same analysis as in the proof Theorem \ref{thm:4}, we obtain 
\begin{equation} \label{GrindEQ__7_108_} 
\dot{V}(s,v)\le -\sum _{i=1}^{n}\lambda ^{i-1} \left(\frac{h_{i} \beta _{i} \chi (v_{i} )}{2 } H_{i}{\sigma(H_i)} +\frac{\phi m_{\lambda } }{2  } p_{i}^{2} \right)  ,
\end{equation} 
for certain constants $\phi,\lambda ,m_{\lambda } >0$ (see \eqref{GrindEQ__7_80_}). 
{Due to assumption $G_i<\bar{G}$, it follows by definition \eqref{GrindEQ__3_2_} and \eqref{GrindEQ__3_5_}, that $|H_i|\leq R_{\bar{G},i}:= \max\{ r_i -\ell_i, \bar{G}-(r_i-\ell_i)\}$. Hence, from condition \eqref{s4} with $R=R_{\bar{G},i}$, it follows that there exists a constant $m_{\bar{G},i}>0$ such that $H_i\sigma(H_i)\ge m_{\bar{G},i} H_i^2$. Combining the latter with \eqref{GrindEQ__7_108_}, \eqref{GrindEQ__7_106_}, and definition $V(t):=V(s(t),v(t))$, $t\ge 0$, we get 
\begin{equation} \label{GrindEQ__7_109_} 
\dot{V}(t)\le -\eta \sum _{i=1}^{n}\lambda ^{i-1} \left(H_{i}^{2} (t)+p_{i}^{2} (t)\right) , t\ge 0 ,
\end{equation} 
where $\eta ={\mathop{\min }\limits_{i=1,\ldots,n}} \left\{\frac{h_{i} \beta _{i} \underline{\chi }m_{\bar{G},i}}{2  } ,\frac{\phi m_{\lambda } }{2 } \right\}$}. Notice also that by definition of $V$ in \eqref{GrindEQ__7_107_} we have that
\begin{equation} \label{GrindEQ__7_110_} 
m\sum _{i=1}^{n}\lambda ^{i-1} \left(H_{i}^{2} +p_{i}^{2} \right) \le V(s,v)\le M\sum _{i=1}^{n}\lambda ^{i-1} \left(H_{i}^{2} +p_{i}^{2} \right)  ,
\end{equation} 
where $m=\frac{1}{2} \min \left(1,\phi\right)$ and $M=\frac{1}{2} \max \left(1,\phi\right)$. Thus, it follows from \eqref{GrindEQ__7_109_} and \eqref{GrindEQ__7_110_} that
\begin{equation} \label{GrindEQ__7_111_} 
\dot{V}(t)\le -\mu V(t), t\ge 0 ,
\end{equation} 
where $\mu :=\eta /M$. Inequality \eqref{GrindEQ__7_111_} implies that
\begin{equation}\label{GrindEQ__7_112_}
V(t)\le \exp \left(-\mu t\right)V(0)\textrm{ for }t\ge 0,
\end{equation}
 or equivalently, due to \eqref{GrindEQ__7_110_},
\begin{equation} \label{GrindEQ__7_113_} 
\sum _{i=1}^{n}\lambda ^{i-1} \left(H_{i}^{2} (t)+p_{i}^{2} (t)\right) \le \frac{M}{m} \exp \left(-\mu t\right)\sum _{i=1}^{n}\lambda ^{i-1} \left(H_{i}^{2} (0)+p_{i}^{2} (0)\right) , t\ge 0 .
\end{equation} 

Since $\lambda \in (0,1)$ it follows that
\begin{equation} \label{GrindEQ__7_114_} 
\begin{aligned} &{\lambda ^{n-1} \sum _{i=1}^{n}\left(H_{i}^{2} +p_{i}^{2} \right) \le \sum _{i=1}^{n}\lambda ^{i-1} \left(H_{i}^{2} +p_{i}^{2} \right) \le \sum _{i=1}^{n}\left(H_{i}^{2} +p_{i}^{2} \right) } \\ &{\Rightarrow \sum _{i=1}^{n}\left(H_{i}^{2} +p_{i}^{2} \right) \le \lambda ^{-(n-1)} \sum _{i=1}^{n}\lambda ^{i-1} \left(H_{i}^{2} +p_{i}^{2} \right) }. \end{aligned} 
\end{equation} 
Thus, combining \eqref{GrindEQ__7_114_} and \eqref{GrindEQ__7_113_} we obtain for $t\ge 0$
\begin{equation} \label{GrindEQ__7_115_} 
\begin{aligned} &{\sum _{i=1}^{n}\left(H_{i}^{2} (t)+p_{i}^{2} (t)\right) \le \lambda ^{-(n-1)} \frac{M}{m} \exp \left(-\mu t\right)\sum _{i=1}^{n}\lambda ^{i-1} \left(H_{i}^{2} (0)+p_{i}^{2} (0)\right) } \\ &{\le \lambda ^{-(n-1)} \frac{M}{m} \exp \left(-\mu t\right)\sum _{i=1}^{n}\left(H_{i}^{2} (0)+p_{i}^{2} (0)\right) } \end{aligned} 
\end{equation} 
Define 
\begin{equation} \label{GrindEQ__7_116_} 
e_{i} =s_{i} -s_{i}^{*} , \; \;i=1,\ldots,n ,
\end{equation} 
and
\begin{equation} \label{GrindEQ__7_117_} 
e:=\left(e_{1} ,\ldots,e_{n} \right)' ,H:=\left(H_{1} ,\ldots,H_{n} \right)' ,\, \, \, p:=\left(p_{1} ,\ldots,p_{n} \right)'  .
\end{equation} 
Since $H_{i} =s_{i} -r_{i} -  {h_{i}} v_{i} $ and $p_{i} =v_{i} -v^{*} $, \eqref{GrindEQ__7_116_} can be written also as
\begin{equation} \label{GrindEQ__7_118_} 
e_{i} =H_{i} +  {h_{i}} p_{i}  .
\end{equation} 
Then \eqref{GrindEQ__7_115_} is written as
\begin{equation} \label{GrindEQ__7_119_} 
\left|(H(t),p(t))\right|\le R\exp \left(-\mu t/2\right)\left|\left(H(0),p(0)\right)\right|, \;\; t\ge 0 ,
\end{equation} 
where $R=\sqrt{\lambda ^{-(n-1)} M/m} $ and $(H,p)'\in {\mathbb R}^{2n} $. Moreover, by \eqref{GrindEQ__7_118_} we have that
\begin{equation} \label{GrindEQ__7_120_} 
\left(\begin{array}{c} {e} \\ {p} \end{array}\right)=\left(\begin{array}{cc} {I_{n} } & {\diag\left\{ {h_1} ,\ldots,h_n \right\}} \\ {0} & {I_{n} } \end{array}\right)\left(\begin{array}{c} {H} \\ {p} \end{array}\right) .
\end{equation} 
Since the above matrix is invertible, there exist constants $c_{1} ,c_{2} >0$ such that 
\begin{equation} \label{GrindEQ__7_121_} 
c_{1} \left|(H,p)\right|\le \left|(e,p)\right|\le c_{2} \left|(H,p)\right|. 
\end{equation} 
Hence, combining \eqref{GrindEQ__7_119_} and \eqref{GrindEQ__7_121_} we get for $t\ge 0$
\[\begin{aligned}   \left|(e(t),p(t))\right|&\le c_{2} \left|(H(t),p(t))\right|\le c_{2} R\exp \left(-\mu t/2\right)\left|\left(H(0),p(0)\right)\right|  {\le \frac{c_{2} }{c_{1} } R\exp \left(-\mu t/2\right)\left|\left(e(0),p(0)\right)\right|} \end{aligned}\] 
The latter and definition of \eqref{GrindEQ__7_117_}, \eqref{GrindEQ__7_116_} and the fact that $p_{i} =v_{i} -v^{*} $, give estimate \eqref{GrindEQ__5_10_} with $\tilde{C}=\frac{c_{2} }{c_{1} } R$ and $\tilde{\mu }=\mu /2$. The proof is complete.
\end{proof}

\section{Conclusions}\label{sec:conc}

In this paper, we investigated safety, stability, and string-stability properties of vehicle platoons operating under the CTH policy. After revisiting the safety and invariance properties of the classical linear CTH controller and identifying its limitations, we proposed a nonlinear modification. The resulting controller guarantees collision avoidance, preservation of admissible speeds bounds, and convergence of the spacing-policy errors. A key feature of the proposed design is the explicit characterization of the spacing-policy error dynamics, which enables a direct nonlinear analysis of the closed-loop system. Furthermore, sufficient conditions for $L^p$ string stability, $1\le p\le\infty$, were established, and the stability properties of the equilibria were analyzed.  The properties of the nonlinear CTH-based controller were illustrated in numerical simulations, including performance comparisons with the linear CTH controller and the controller from \cite{Karafyllis2023}, as well as practically realistic scenarios involving the NGSIM and OpenACC datasets.

Future research will focus on the extension of the proposed framework to higher-order and nonlinear vehicle dynamics models, and the development of corresponding safety, stability, and string-stability results. Additional directions include the investigation of robustness of the nonlinear CTH-based controller with respect to sensing  and actuation imperfections, and, in particular, on the analysis and compensation of the effects of input delays on the closed-loop platoon dynamics (see e.g. \cite{Bekiaris}, \cite{Haan}, \cite{Orosz}).

\printcredits


\begin{thebibliography}{99}
\bibitem{Akcelik} 	R. Akcelik and D. C. Biggs, ``Acceleration Profile Models for Vehicles
 	in Road Traffic'', {\em Transportation Science},  21, 36--54, 1987.	

 
\bibitem{Alan}
A. Alan, A. J. Taylor, C. R. He, A. D. Ames and G. Orosz, ``Control Barrier Functions and Input-to-State Safety With Application to Automated Vehicles'', \emph{IEEE Transactions on Control Systems Technology}, 31, 2744--2759, 2023.
 
 \bibitem{Ames2017}
A. D. Ames, X. Xu, J. W. Grizzle, and P. Tabuada,
``Control Barrier Function Based Quadratic Programs for Safety Critical Systems'',
\emph{IEEE Transactions on Automatic Control},  62,  3861--3876, 2017.


\bibitem{Ampountolas}
K. Ampountolas, ``The Unscented Kalman Filter for Nonlinear Parameter Identification of Adaptive Cruise Control Systems'', \emph{IEEE Transactions on Intelligent Vehicles},  8,  4094--4104, 2023.

\bibitem{Aubin}
J.-P. Aubin, \emph{Viability Theory}, Boston: Birkhauser, 1991.

\bibitem{Bek} N. Bekiaris-Liberis, C. Roncoli and M. Papageorgiou, ``Predictor-Based Adaptive Cruise Control Design'', \emph{IEEE Transactions on Intelligent Transportation Systems},  19, 3181--3195,  2018.

\bibitem{Bekiaris1}
N. Bekiaris-Liberis, ``Robust String Stability and Safety of CTH Predictor-Feedback CACC'', \emph{IEEE Transactions on Intelligent Transportation Systems}, 24,  8209--8221, 2023.

\bibitem{Bekiaris}
N. Bekiaris-Liberis, ``Nonlinear Predictor-Feedback Cooperative Adaptive Cruise Control of Vehicles with Nonlinear Dynamics and Input Delay'', \emph{International Journal of Robust and Nonlinear Control}, 34, 6683-6698, 2024.

\bibitem{Berger}
T. Berger and B. Besselink, ``String Stability and Guaranteed Safety via Funnel Cruise Control for Vehicle Platoons'', \emph{IEEE Transactions on Automatic Control}, 71, 81-90,   2026.
 
\bibitem{Besselink2017}
B. Besselink and K. H. Johansson,
``String Stability and a Delay-Based Spacing Policy for Vehicle Platoons Subject to Disturbances'',
\emph{IEEE Transactions on Automatic Control},  62,  4376--4391, 2017.

\bibitem{Brezis}
H. Br\'{e}zis, \emph{Functional Analysis, Sobolev Spaces and Partial Differential Equations}, Springer, 2011.
 

\bibitem{Feng2019}
S. Feng, Y. Zhang, S. E. Li, Z. Cao, H. X. Liu, and L. Li,
``String Stability for Vehicular Platoon Control: Definitions and Analysis Methods'',
\emph{Annual Reviews in Control},  47,  81--97, 2019.

\bibitem{Haan}
R. de Haan, T. P. J. van der Sande, E. Lefeber and I. J. M. Besselink, ``Cooperative Adaptive Cruise Control for Heterogeneous Platoons With Delays: Controller Design and Experiments'', \emph{IEEE Transactions on Control Systems Technology}, 33, 1361--1371, 2025.

\bibitem{Hamdipoor}
V. Hamdipoor, N. Meskin, C. G. Cassandras, ``Safe Control Synthesis Using Environmentally Robust Control Barrier Functions'', \emph{European Journal of Control},
 74, 100840, 2023.
  
\bibitem{He2018}
C. R. He and G. Orosz,
``Safety Guaranteed Connected Cruise Control'',
 \emph{Proceedings of the 21st International Conference on Intelligent Transportation Systems (ITSC)}, Maui, HI,  549--554, 2018.
 
\bibitem{Ioannou1993}
P. A. Ioannou and C. C. Chien,
``Autonomous Intelligent Cruise Control'',
\emph{IEEE Transactions on Vehicular Technology},  42,  657--672, 1993.
 
\bibitem{Karafyllis2023}
I. Karafyllis, D. Theodosis, M. Papageorgiou, ``Nonlinear Adaptive Cruise Control of
Vehicular Platoons'' \emph{International Journal of Control}, 96, 147--169, 2023.

\bibitem{Karafyllis2025}
I. Karafyllis, D. Theodosis, and M. Papageorgiou, ``Nonlinear Cruise Controllers with
Bidirectional Sensing for a String of Vehicles'', \emph{European Journal of Control}, 85, 101341, 2025.

\bibitem{Karafyllis2026}
 I. Karafyllis, D. Theodosis, and M. Papageorgiou, ``Sufficient Conditions for String Stability'', \emph{International Journal of Systems Science}, 1–18, https://doi.org/10.1080/00207721.2026.2641208.

\bibitem{Khalil2002}
H. K. Khalil,
\emph{Nonlinear Systems},
Prentice Hall, 2002.
 
 
\bibitem{Lunze2019}
J. Lunze,
``Adaptive Cruise Control with Guaranteed Collision Avoidance'',
\emph{IEEE Transactions on Intelligent Transportation Systems},  20,  1897--1907, 2019.

 \bibitem{OpenACC}
 M. Makridis, K. Mattas, A. Anesiadou, and B. Ciuffo, ``OpenACC. An Open Database of Car-Following Experiments to Study the Properties of Commercial ACC Systems'', \emph{Transportation Research Part C: Emerging Technologies}, 125, 103047, 2021.
 
 	\bibitem{Martinez}
 {	J. J. Martinez and C. Canudas-de-Wit, ``A Safe Longitudinal Control for Adaptive Cruise Control and Stop-and-Go Scenarios'', {\em IEEE Transactions on Control Systems Technology},  15,   246--258, 2007.}
 


\bibitem{Molnar}
T. G. Molnar, G. Orosz and A. D. Ames, ``On the Safety of Connected Cruise Control: Analysis and Synthesis with Control Barrier Functions'', \emph{2023 62nd IEEE Conference on Decision and Control (CDC)}, Singapore, 1106--1111, 2023.

\bibitem{Montanino}
M. Montanino, J. Monteil, V. Punzo, ``From Homogeneous to Heterogeneous Traffic Flows: $L_p$ String Stability Under Uncertain Model Parameters'', \emph{Transportation Research Part B: Methodological}, 146, 136-154, 2021.

\bibitem{Montanino2}
M. Montanino, V. Punzo, ``Trajectory Data Reconstruction and Simulation-Based Validation Against Macroscopic Traffic Patterns'', \emph{Transportation Research Part B: Methodological}, 80, 82--106, 2015.

\bibitem{Monteil2017}
J. Monteil, M. Bouroche and D. J. Leith, ``$L_2$ and $L_{\infty}$ Stability Analysis of Heterogeneous Traffic With Application to Parameter Optimization for the Control of Automated Vehicles'',  \emph{IEEE Transactions on Control Systems Technology}, 27, 934-949, 2019.

 

\bibitem{Orosz}
G. Orosz  and  T. G. Moln\'{a}r, \emph{Dynamics and Control of Connected Vehicles}, Surveys and Tutorials in
the Applied Mathematical Sciences,  Springer, Cham, 2025.

 

\bibitem{Ploeg2014}
J. Ploeg, N. van de Wouw, and H. Nijmeijer,
``String Stability of Cascaded Systems: Application to Vehicle Platooning'',
\emph{IEEE Transactions on Control Systems Technology},  22,  786--793, 2014.

\bibitem{Rajamani2012}
R. Rajamani,
\emph{Vehicle Dynamics and Control},
Springer-Verlag, New York, NY, USA, 2012.
 
\bibitem{Santhanakrishnan2003}
K. Santhanakrishnan and R. Rajamani,
``On Spacing Policies for Highway Vehicle Automation'',
\emph{IEEE Transactions on Intelligent Transportation Systems},  4,  198--204, 2003.
 
\bibitem{Shen}
 J. Shen, L. Du, ``Sequential Feasibility and Constraint Properties of CAV Platoons Under Various Vehicle Dynamics and Safety Distance Constraints'', \emph{Transportation Research Part B: Methodological}, 185, 102966, 2024.
  
\bibitem{Swaroop1996}
D. Swaroop and J. K. Hedrick,
``String Stability of Interconnected Systems'',
\emph{IEEE Transactions on Automatic Control},  41,  349--357, 1996.
 
 

\bibitem{Verginis2018}
C. K. Verginis, C. P. Bechlioulis, D. V. Dimarogonas, and K. J. Kyriakopoulos,
``Robust Distributed Control Protocols for Large Vehicular Platoons with Prescribed Transient and Steady-State Performance'',
\emph{IEEE Transactions on Control Systems Technology},  26,  299--304, 2018.
 
\bibitem{Wang}
J. Wang, S. Gong, S. Peeta, L. Lu, ``A Real-Time Deployable Model Predictive Control-Based Cooperative Platooning Approach for Connected and Autonomous Vehicles'', \emph{Transportation Research Part B: Methodological}, 128, 271-301, 2019.
 
\bibitem{MWang}
  {	M. Wang, S. P. Hoogendoorn, W. Daamen, B. van Arem, and R. Happee, ``Game Theoretic Approach for Predictive Lane-Changing And Car-Following Control", {\em Transportation Research Part C: Emerging Technologies},  58,  73--92, 2015.}
  	
\bibitem{Wijnbergen2020}
P. Wijnbergen and B. Besselink,
``Existence of Decentralized Controllers for Vehicle Platoons: On the Role of Spacing Policies and Available Measurements'',
\emph{Systems \& Control Letters},  145, p. 104796, 2020.

\bibitem{Xiao}
L. Xiao and F. Gao, ``Practical String Stability of Platoon of Adaptive Cruise Control Vehicles'', \emph{IEEE Transactions on Intelligent Transportation Systems}, 12, 1184-1194,  2011.
  
\bibitem{Yang}
L. Yang, Z. Sun, Y. Liu, L. Chen, ``Robust Control for Connected Automated Vehicle Platoon With Multiple-Predecessor Following Topology Considering Communication Loss'', \emph{Transportation Research Part B: Methodological}, 196, 103212,  2025.
  
\bibitem{Zheng2016}
Y. Zheng, S. E. Li, J. Wang, D. Cao, and K. Li,
``Stability and Scalability of Homogeneous Vehicular Platoon: Study on the Influence of Information Flow Topologies'',
\emph{IEEE Transactions on Intelligent Transportation Systems},  17,  14--26, 2016.
 
 
\bibitem{Zhao}
C. Zhao, T. G. Molnar, H. Yu, ``Leveraging Cooperative Connected Automated Vehicles for Mixed Traffic Safety'', \emph{Transportation Research Part B: Methodological}, 203, 103352, 2026.

\bibitem{Zhou}
Y. Zhou, S. Ahn, ``Robust Local and String Stability for a Decentralized Car Following Control Strategy for Connected Automated Vehicles'', \emph{Transportation Research Part B: Methodological}, 125, 175-196, 2019.

\end{thebibliography}
\end{document}